\numberwithin{equation}{section}
\LetLtxMacro\orgvdots\vdots
\LetLtxMacro\orgddots\ddots
\DeclareRobustCommand\vdots{%
  \mathpalette\@vdots{}%
}
\newcommand*{\@vdots}[2]{%
  % #1: math style
  % #2: unused
  \sbox0{$#1\cdotp\cdotp\cdotp\m@th$}%
  \sbox2{$#1.\m@th$}%
  \vbox{%
    \dimen@=\wd0 %
    \advance\dimen@ -3\ht2 %
    \kern.5\dimen@
    % remove side bearings
    \dimen@=\wd2 %
    \advance\dimen@ -\ht2 %
    \dimen2=\wd0 %
    \advance\dimen2 -\dimen@
    \vbox to \dimen2{%
      \offinterlineskip
      \copy2 \vfill\copy2 \vfill\copy2 %
    }%
  }%
}
\DeclareRobustCommand\ddots{%
  \mathinner{%
    \mathpalette\@ddots{}%
    \mkern\thinmuskip
  }%
}
\newcommand*{\@ddots}[2]{%
  % #1: math style
  % #2: unused
  \sbox0{$#1\cdotp\cdotp\cdotp\m@th$}%
  \sbox2{$#1.\m@th$}%
  \vbox{%
    \dimen@=\wd0 %
    \advance\dimen@ -3\ht2 %
    \kern.5\dimen@
    % remove side bearings
    \dimen@=\wd2 %
    \advance\dimen@ -\ht2 %
    \dimen2=\wd0 %
    \advance\dimen2 -\dimen@
    \vbox to \dimen2{%
      \offinterlineskip
      \hbox{$#1\mathpunct{.}\m@th$}%
      \vfill
      \hbox{$#1\mathpunct{\kern\wd2}\mathpunct{.}\m@th$}%
      \vfill
      \hbox{$#1\mathpunct{\kern\wd2}\mathpunct{\kern\wd2}\mathpunct{.}\m@th$}%
    }%
  }%
}
\theoremstyle{plain}
\newtheorem{thm}{Theorem}[section]
\newtheorem{lemma}[thm]{Lemma}
\newtheorem{cor}[thm]{Corollary}
\newtheorem{prop}[thm]{Proposition}
 \theoremstyle{definition}
\newtheorem{defn}[thm]{Definition}
\newtheorem{rem}[thm]{Remark}
\newtheorem{ex}[thm]{Example}
\newtheorem{setup}[thm]{Setup}
\newcommand{\op}[1]{\operatorname{#1}}
\newcommand{\mb}[1]{\mathbb{#1}}
\newcommand{\mc}[1]{\mathcal{#1}}
\newcommand{\mr}[1]{\mathrm{#1}}
\newcommand{\vphi}{\varphi}
\newcommand{\GW}{\mathrm{GW}}
\newcommand{\Tr}{\operatorname{Tr}}
\newcommand{\N}{\operatorname{N}}
\newcommand{\rank}{\operatorname{rank}}
\newcommand{\Disc}{\operatorname{Disc}}
\newcommand{\ind}{\operatorname{ind}}
\newcommand{\G}{\mathrm{G}}
\newcommand{\gauss}{\mathcal G}
\newcommand{\Res}{\operatorname{Res}}
\renewcommand{\P}{\mathbb{P}}
\newcommand{\Mor}{\operatorname{Mor}}
\newcommand{\Sym}{\operatorname{Sym}}
\newcommand{\PGL}{\operatorname{PGL}}
\newcommand{\Aut}{\operatorname{Aut}}
\newcommand{\seg}{\operatorname{seg}}
\newcommand{\con}{\operatorname{con}}
\newcommand{\FK}{\operatorname{FK}}
\newcommand{\Conf}{\operatorname{Conf}}
\newcommand{\Gal}{\operatorname{Gal}}
\renewcommand{\emptyset}{\varnothing}
\newcommand{\R}{\mathbb{R}}
\newcommand{\C}{\mathbb{C}}
\newcommand{\VB}{\mathtt{V}_B}
\newcommand{\LB}{\mathtt{L}_B}
\newcommand{\RB}{\mathtt{R}_B}
\newcommand{\KB}{\mathtt{K}_B}
\newcommand{\detABQ}{\mc{A}(B,Q)}
\newcommand{\A}{\mathbb{A}}
\definecolor{darkspringgreen}{rgb}{0.09, 0.6, 0.1}
\definecolor{dsg}{rgb}{0.09, 0.6, 0.1}
\begin{document}
%%%%%%%%%%%%%%%%%%%%%%%%%%%%%%%%%%%%%%%%
\title{Quadratic Segre indices}

\author[Espreafico]{Felipe Espreafico}
\address{Institute de Mathématiques de Jussieu \\ Sorbonne Université}
\email{espreafico-guelerman@imj-prg.fr}
\urladdr{espreafico-guelerman.github.io}

\author[McKean]{Stephen McKean}
\address{Department of Mathematics \\ Brigham Young University} 
\email{mckean@math.byu.edu}
\urladdr{shmckean.github.io}

\author[Pauli]{Sabrina Pauli}
\address{Fachbereich Mathematik \\ TU Darmstadt}
\email{pauli@mathematik.tu-darmstadt.de}
\urladdr{homepage.sabrinapauli.com}

\subjclass[2020]{Primary: 14N15. Secondary: 14G27.}
%%%%%%%%%%%%%%%%%%%%%%%%%%%%%%%%%%%%%%%%

\begin{abstract}
We prove that the local Euler class of a line on a degree $2n-1$ hypersurface in projective $n+1$ space is given by a product of indices of Segre involutions. Segre involutions and their associated indices were first defined by Finashin and Kharlamov over the reals. Our result is valid over any perfect field of characteristic not 2 and gives an infinite family of problems in enriched enumerative geometry with a shared geometric interpretation for the local type.
\end{abstract}

\maketitle

\section{Introduction}
We begin with a brief statement of our main result. We then give the motivation for our result, as well as the relevant terminology, in Section~\ref{sec:motivation}. Sufficiently motivated readers who are tired of history may skip to Section~\ref{sec:ideas}, where we summarize the ideas behind our proofs.

In short, we give a geometric interpretation for the enumerative weight of a line on a general degree $2n-1$ hypersurface in $\mb{P}^{n+1}$ over a perfect field of characteristic not 2, generalizing work of Kass--Wickelgren in the $n=2$ case \cite{cubicsurface} and Pauli in the $n=3$ case \cite{PauliQuadraticTypes}. Our geometric interpretation is directly inspired by a construction of Finashin and Kharlamov \cite{FinashinKharlamov2021}.

\begin{setup}
    Let $X\subset\mb{P}^{n+1}_k$ be a general hypersurface of degree $2n-1$ over a perfect field $k$. Given a line $\ell\subset X$ with field of definition $k(\ell)$, there is an associated rational curve $\mc{G}(\ell)\subset\mb{P}^{n-1}_k$ of degree $2n-2$. Over $\overline{k}$ there are exactly $\binom{n}{2}$ planes of dimension $n-3$ meeting $\mc{G}(\ell)$ in $2n-4$ points. Each such $(2n-4)$-secant plane $S$, whose field of definition we denote $k(S)$, determines an involution $i_S:\ell_{k(S)}\to\ell_{k(S)}$. The fixed points of $i_S$ are defined over $k(S)(\sqrt{\alpha_S})$ for some $\alpha_S\in k(S)^\times/(k(S)^\times)^2$.

    Let $\GW(k)$ denote the Grothendieck--Witt ring of virtual quadratic forms over $k$. Given $a\in k^\times$, let $\langle a\rangle\in\GW(k)$ represent the isomorphism class of the bilinear form $[(x,y)\mapsto axy]$. Given a field extension $L/k$, let $\N_{L/k}:L\to k$ and $\Tr_{L/k}:L\to k$ denote the field norm and field trace, respectively. 
    
    We define the \emph{Segre index} of $\ell\subset X$ to be
    \[\seg(X,\ell):=\Tr_{k(\ell)/k}\big\langle\prod_S \N_{k(S)/k(\ell)}(\alpha_S)\big\rangle\in\GW(k),\]
    where this product ranges over the $(n-3)$-planes that are $(2n-4)$-secant to $\mc{G}(\ell)$. Note that $k(\ell)/k$ is separable since $k$ is perfect, so the field trace here is non-degenerate.
\end{setup}

\begin{thm}[Main Theorem]\label{thm:local index}
    Let $X=\mb{V}(F)\subset\mb{P}^{n+1}_k$ be a general hypersurface of degree $2n-1$ over a perfect field $k$ of characteristic not 2. Let
    \begin{align*}
    \sigma_F:\mb{G}(1,n+1)\to\Sym^{2n-1}(\mc{S}^\vee)
    \end{align*}
    denote the section determined by $F$, where $\mc{S}$ is the tautological bundle on the Grassmannian of lines in $\mb{P}^{n+1}$. Let $\ind_\ell\sigma_F\in\GW(k)$ denote the local index of $\sigma_F$ at a line $\ell\subset X$. Then
    \[\ind_\ell\sigma_F=\seg(X,\ell).\]
\end{thm}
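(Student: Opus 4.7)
The strategy is to compute $\ind_\ell\sigma_F$ as a trace of a Jacobian determinant $J\in k(\ell)^\times$ via Scheja--Storch, and then to identify $J$ modulo squares with $\prod_S \N_{k(S)/k(\ell)}(\alpha_S)$. First I would choose projective coordinates on $\mb{P}^{n+1}_k$ so that $\ell=\mb{V}(x_2,\ldots,x_{n+1})$, work in the standard affine chart of $\mb{G}(1,n+1)$ at $\ell$ parametrized by the coefficients $a_{j,k}$ ($2\le j\le n+1$, $k=0,1$) via $x_j=a_{j,0}x_0+a_{j,1}x_1$, and trivialize $\Sym^{2n-1}(\mc{S}^\vee)$ using the monomial basis $\{x_0^i x_1^{2n-1-i}\}$. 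Writing $F=\sum_{j=2}^{n+1}f_j(x_0,x_1)\,x_j+O(x_{\ge 2}^2)$ with $\deg f_j=2n-2$, the linearization of $\sigma_F$ at $\ell$ is the $2n\times 2n$ matrix $M$ whose $2n$ columns are the monomial expansions of $x_0 f_j$ and $x_1 f_j$ for $j=2,\ldots,n+1$. Because $\ell$ is a simple zero of $\sigma_F$ for a general $X$, Scheja--Storch gives
\[\ind_\ell\sigma_F\;=\;\Tr_{k(\ell)/k}\langle\det M\rangle\in\GW(k).\]

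Next I would describe $\det M$ intrinsically as the determinant of the multiplication map
\[\mu:\; H^0(\mc{O}_\ell(1))\otimes\langle f_2,\ldots,f_{n+1}\rangle\;\longrightarrow\;H^0(\mc{O}_\ell(2n-1)),\qquad g\otimes f\mapsto gf,\]
a $k(\ell)$-linear map between $2n$-dimensional spaces; its vanishing is equivalent to a linear syzygy $\sum L_j f_j=0$, so $\det M$ is a Macaulay-type resultant for the $n$-tuple $(f_2,\ldots,f_{n+1})$. In the Finashin--Kharlamov picture, each $(2n-4)$-secant $(n-3)$-plane $S$ (over $\overline{k}$) corresponds to a $2$-dimensional subspace $W_S\subset\langle f_2,\ldots,f_{n+1}\rangle_{\overline{k}}$ whose elements share a common factor $g_S\in H^0(\mc{O}_\ell(2n-4))$; writing $W_S=g_S\cdot U_S$ with $U_S$ a $2$-dimensional space of binary quadrics, the Segre involution $i_S$ is the $g^1_2$ determined by $U_S$, and $\alpha_S$ is the discriminant of the binary quadratic ramification form $\partial_0 h_1\cdot\partial_1 h_2-\partial_1 h_1\cdot\partial_0 h_2$ for any basis $\{h_1,h_2\}$ of $U_S$.

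The heart of the proof is the key identity
\[\det M\;\equiv\;\prod_S \N_{k(S)/k(\ell)}(\alpha_S)\pmod{(k(\ell)^\times)^2},\]
and I expect this to be the main obstacle. The plan is to pass to $\overline{k(\ell)}$, where the $\binom{n}{2}$ secants split, and to factor $\det M$ into ``secant contributions'' using a basis of $\langle f_j\rangle_{\overline{k(\ell)}}$ adapted to the decompositions $\{W_S=g_S\cdot U_S\}_S$. After extracting each common factor $g_S$, the residual contribution from $U_S$ should reduce to a Sylvester-type calculation whose determinant is $\alpha_S$ up to squares, recovering the $n=2$ Kass--Wickelgren computation \cite{cubicsurface} locally at each secant. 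The delicate points are: (i) showing that the common factors $g_S$ of degree $2n-4$ contribute only squares under the change of basis and therefore drop out mod squares; and (ii) tracking the Galois action on the set of secants so that the individual $\overline{k(\ell)}$-level factors reassemble into the norms $\N_{k(S)/k(\ell)}(\alpha_S)$ rather than a different Galois-invariant combination. If a direct block factorization proves intractable, a fallback is a universal specialization argument: reduce to a particularly symmetric hypersurface (Fermat-like, or one with a large automorphism group) where all $\binom{n}{2}$ secants can be written down explicitly, verify the identity there, and propagate to a general $X$ using algebraic continuity of both sides in $k(\ell)^\times/(k(\ell)^\times)^2$. Once the identity is in hand, applying $\Tr_{k(\ell)/k}\langle-\rangle$ delivers $\ind_\ell\sigma_F=\seg(X,\ell)$.
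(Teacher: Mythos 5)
Your setup is correct and matches the paper's: the local index is $\Tr_{k(\ell)/k}\langle\det M\rangle$ where $M$ is the $2n\times 2n$ matrix of the multiplication map $H^0(\mc{O}_\ell(1))\otimes\langle f_2,\ldots,f_{n+1}\rangle\to H^0(\mc{O}_\ell(2n-1))$ (the paper's $A_{P_1,\ldots,P_n}$), and each secant $S$ does correspond to a $2$-plane $W_S=g_S\cdot U_S$ with $\alpha_S$ the resultant/discriminant of a basis of $U_S$. The gap is the key identity $\det M\equiv\prod_S\N_{k(S)/k(\ell)}(\alpha_S)$. Your primary route --- an adapted basis of $\langle f_2,\ldots,f_{n+1}\rangle_{\overline{k(\ell)}}$ realizing a block factorization --- is exactly what the paper explains cannot work for $n>3$: there are $\binom{n}{2}$ subspaces $W_S$ inside an $n$-dimensional space (six $2$-planes in a $4$-space already for $n=4$, the double-six configuration), so no basis, and no projective normalization of the secants, is simultaneously adapted to all of them; this is why the Kass--Wickelgren and Pauli arguments stop at $n=3$. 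The common factors $g_S$ do not ``drop out as squares under a change of basis'' because no single change of basis exhibits all of them at once.

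Your fallback (verify at a symmetric example and ``propagate by algebraic continuity in $k(\ell)^\times/(k(\ell)^\times)^2$'') is the right spirit but is not a proof as stated: square classes are not rigid under specialization, and two regular functions agreeing at one point need not agree up to squares elsewhere. To make this work one needs, as the paper does, (a) an irreducible parameter space on which \emph{both} sides are globally defined polynomials --- the product of the $\alpha_S$ is not visibly a polynomial in the coefficients of the $f_j$, which is why the paper introduces Finashin--Kharlamov's conic models $(B,\beta,Q)$, on whose parameter space $\Conf_{\binom{n}{2}}(\mb{A}^2)\times\mc{Q}$ the product of resultants $\mc{R}(B,Q)$ and the determinant $\mc{A}(B,Q)$ become honest polynomials; (b) a proof that the two polynomials have the same zero locus; (c) irreducibility of the individual resultant factors and of the interpolation determinant $\det\VB$, so that the Nullstellensatz forces $\mc{A}=c\,(\det\VB)^d\prod\Res(\cdot)^{N_i}$; and (d) a degree/scaling count pinning down the exponents $N_i=1$, followed by one explicit evaluation to fix $c$. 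Without ingredients (a)--(d), the single-example verification cannot be propagated, so the central step of your argument remains unestablished.
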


Theorem~\ref{thm:local index} allows us to immediately deduce a quadratically enriched count of lines on a degree $2n-1$ hypersurface in $\mb{P}^{n+1}$. This is because the Euler number underlying the total count is completely determined by the (signed) real and complex counts \cite[Example 8.2]{LevineMotivicEulerCharacteristic}, \cite[Corollary~6.9]{BW23}, the real count is $(2n-1)!!$ \cite{FinashinKharlamov2012,OkonekTeleman}, and the complex count is computable by Schubert calculus.

\begin{thm}\label{thm:arithmetic count}
    Let $X\subset\mb{P}^{n+1}_k$ be a general hypersurface of degree $2n-1$ over a field $k$ of characteristic 0.\footnote{The only place where we use the assumption $\op{char}{k}=0$ is Proposition~\ref{prop: castelnuovo}.} Let $c(n)$ denote the top Chern number of $\Sym^{2n-1}(\mc{S}^\vee)\to\mb{G}(1,n+1)$. Then we have an equality
    \[\sum_{\ell\subset X}\seg(X,\ell)=(2n-1)!!\langle 1\rangle+\frac{c(n)-(2n-1)!!}{2}\mb{H}\]
    in $\GW(k)$. 
\end{thm}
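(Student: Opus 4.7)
The plan is to combine Theorem~\ref{thm:local index} with the already-cited rigidity results for Euler numbers valued in $\GW(k)$. First I would observe that, by Theorem~\ref{thm:local index}, for every line $\ell \subset X$ the Segre index $\seg(X,\ell)$ equals the local index $\ind_\ell \sigma_F$. Summing over all lines of $X$ and using that $X$ is general (so $\sigma_F$ has only isolated zeros, each cut out transversely in the sense of enriched enumerative geometry) gives
\[
\sum_{\ell\subset X}\seg(X,\ell)\;=\;\sum_{\ell\subset X}\ind_\ell\sigma_F\;=\;e\bigl(\Sym^{2n-1}(\mc{S}^\vee)\bigr)\;\in\;\GW(k),
\]
where $e$ denotes the $\GW(k)$-valued Euler number (or global index) of the relevant bundle on $\mb{G}(1,n+1)$.

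Next I would invoke the rigidity result of Levine \cite[Example 8.2]{LevineMotivicEulerCharacteristic} and Bachmann--Wickelgren \cite[Corollary~6.9]{BW23}: because $\mb{G}(1,n+1)$ is cellular and $\Sym^{2n-1}(\mc{S}^\vee)$ is pulled back from the integral model, $e(\Sym^{2n-1}(\mc{S}^\vee))$ lies in the image of the canonical map $\GW(\mb{Z}) \to \GW(k)$, and hence is uniquely determined by its rank and signature. Concretely, any such class can be written in the form $a\langle 1\rangle + b\,\mb{H}$ with $a,b \in \mb{Z}$, where $\mb{H} = \langle 1\rangle + \langle -1\rangle$.

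It remains to compute these two invariants. The rank is the classical complex count, which by definition equals the top Chern number $c(n)$ of $\Sym^{2n-1}(\mc{S}^\vee)$; this is the integer computable by Schubert calculus on $\mb{G}(1,n+1)$. The signature is the signed real count of lines on a general real hypersurface of degree $2n-1$ in $\mb{P}^{n+1}_\R$, which is $(2n-1)!!$ by Finashin--Kharlamov \cite{FinashinKharlamov2012} and Okonek--Teleman \cite{OkonekTeleman}. Setting $a = (2n-1)!!$ and $a + 2b = c(n)$ forces $b = \tfrac{c(n) - (2n-1)!!}{2}$, yielding the stated formula.

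The only real subtlety — and the reason I view it as the main obstacle — is justifying that Levine's/Bachmann--Wickelgren's rigidity applies verbatim here, i.e.\ that the enriched Euler number we assembled from local indices (via Theorem~\ref{thm:local index}) coincides with the pullback of the integral Euler class. Once one knows the section $\sigma_F$ is sufficiently transverse for general $X$ (so that the sum of local indices genuinely represents the global Euler number), everything else reduces to plugging the known real and complex counts into the rank--signature parametrization of $\GW(\mb{Z}) \to \GW(k)$.
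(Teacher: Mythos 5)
Your proposal is correct and follows essentially the same route as the paper: the paper deduces Theorem~\ref{thm:arithmetic count} from Theorem~\ref{thm:local index} together with the fact that the Euler number of $\Sym^{2n-1}(\mc{S}^\vee)$ is determined by its rank (the complex count $c(n)$) and signature (the real count $(2n-1)!!$), citing the same rigidity results of Levine and Bachmann--Wickelgren. Your extra care about transversality of $\sigma_F$ and the Poincar\'e--Hopf identification is a reasonable elaboration of what the paper leaves implicit.
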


These theorems provide an infinite family of enriched enumerative problems with a shared geometric interpretation for their local indices. In the language of the \emph{geometricity question} \cite[Appendix C]{McK22}, these ``lines on hypersurfaces'' problems belong to the same phylum of enumerative problems, as one would expect.

\subsection{Motivation}\label{sec:motivation}
Famously, there are 27 lines on every smooth complex cubic surface \cite{cayley}. A modern method for calculating the number 27 is as the top Chern number of the bundle
\[\Sym^3(\mc{S}^\vee)\to\mb{G}(1,3),\]
where $\mc{S}\to\mb{G}(1,3)$ is the tautological bundle on the Grassmannian of projective lines in $\mb{P}^3$. More generally, one can prove that there are finitely many lines on a generic degree $2n-1$ hypersurface in $\mb{P}^{n+1}$, all of which are reduced. The top Chern number of
\[\Sym^{2n-1}(\mc{S}^\vee)\to\mb{G}(1,n+1)\]
can then be computed via Schubert calculus, giving the 2875 lines on a quintic threefold or, for those impressed by large numbers, the 1,192,221,463,356,102,320,754,899 lines on a novemdecic tenfold (see e.g.~\cite[\S 6.5]{3264}).

Lines on hypersurfaces generally make for good conversation with non-mathematicians, such as relatives or university administrators --- most people can appreciate the visual beauty of a cubic surface with its configuration of lines, and administrators tend to be impressed by large numbers. However, as with all online image searches, one should be careful when pulling up a picture of a cubic surface and its lines at a family party. This is because any such picture will actually depict a \emph{real} cubic surface, which may have fewer than 27 \emph{real} lines.

\subsubsection{The real story}\label{sec:real story}
Schl\"afli proved that every smooth real cubic surface has 3, 7, 15, or 27 real lines \cite{Sch58}. Nearly a century later, Segre constructed an involution on each real line on a cubic surface \cite{Segre1942}, which we now describe. Let $\ell$ be a real line on a real cubic surface $X$. For each point $p\in\ell$, the intersection $T_pX\cap X$ is the union of $\ell$ and a residual conic $Q_p\subset T_pX$. The intersection $Q_p\cap\ell$ consists of $p$ and another point, say $q$. The \emph{Segre involution} swaps $p$ and $q$ (see Figure~\ref{fig:cubic}).

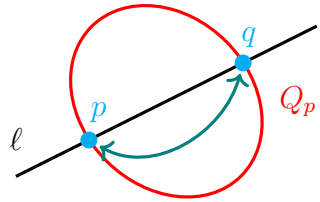
\begin{figure}
\centering
\begin{tikzpicture}
    \draw[very thick] (-2,-1) -- (2,1);
    \node at (-2,-.5) {$\ell$};
    \draw[red,very thick,rotate=45] (0,0) ellipse (32pt and 40pt);
    \node[red] at (1.75,0) {$Q_p$};
    \node (p) at (-1.03,-.515) {};
  \node (q) at (1.02,.51) {};
  \draw[cyan,fill=cyan] (p) circle (3pt);
  \draw[cyan,fill=cyan] (q) circle (3pt);
  \draw [<->,very thick,teal] (p) to [out=-30,in=-105] (q);
  \node[cyan] at (-.9,-.15) {$p$};
  \node[cyan] at (1.1,.85) {$q$};
\end{tikzpicture}
\caption{Segre involution on a cubic surface}\label{fig:cubic}
\end{figure}

Define \emph{hyperbolic lines} as those whose Segre involution has real fixed points and \emph{elliptic lines} as those whose Segre involution has complex fixed points. For each topological type of real cubic surface, Segre computed the number of hyperbolic and elliptic lines contained therein. These computations imply the striking formula
\begin{equation}\label{eq:real cubics}
\#\{\text{hyperbolic lines}\}-\#\{\text{elliptic lines}\}=3,
\end{equation}
although this seems to have gone unnoticed until it was observed by Finashin--Kharlamov \cite{FinashinKharlamov2012} and Okonek--Teleman \cite{OkonekTeleman} about 70 years later. Equation~\ref{eq:real cubics} should be viewed as the correct analog of the 27 lines on a cubic surface over $\mb{C}$. While the total number of real lines on a cubic surface over $\mb{R}$ depends on the choice of cubic surface, the \emph{signed} count of lines given in Equation~\ref{eq:real cubics} is independent of the choice of cubic surface.

Inspired by Equation~\ref{eq:real cubics}, Finashin and Kharlamov set out to give a signed count of real lines on degree $2n-1$ hypersurfaces in $\mb{RP}^{n+1}$. They proved that the overall signed count is equal to $(2n-1)!!$ \cite{FinashinKharlamov2012}, which arises as the Euler number of the real vector bundle $\Sym^{2n-1}(\mc{S}^\vee)\to\mb{G}(1,n+1)$. See also \cite{OkonekTeleman,Sol06} for related results. To complete the signed count of real lines on hypersurfaces, there also needs to be a geometric determination of the type of a line, analogous to Segre involutions determining whether a line on a cubic surface is hyperbolic or elliptic. Finashin and Kharlamov gave two such geometric interpretations (and proved that they are equivalent) in \cite{FinashinKharlamov2021}: \emph{Welschinger weights} and \emph{Segre indices}. We will only discuss Segre indices, as these are the interpretation relevant for our article.

Given a smooth hypersurface $X\subset\mb{R}\mb{P}^{n+1}$, there is a \emph{Gauß map}
\begin{align*}
    \mc{G}:X&\to\mb{RP}^{n+1}\\
    p&\mapsto T_pX,
\end{align*}
where the target $\mb{R}\mb{P}^{n+1}$ is the dual projective space parameterizing codimension 1 hyperplanes in $\mb{R}\mb{P}^{n+1}$. If we restrict $\mc{G}$ to a line $\ell\subset X$, then the tangent plane $\mc{G}(p):=T_pX$ contains $\ell$ for each $p\in\ell$. The flag variety of codimension 1 hyperplanes in $\mb{R}\mb{P}^{n+1}$ containing a given line (in this case, $\ell$) is isomorphic to $\mb{R}\mb{P}^{n-1}$. We thus obtain a restricted Gauß map
\[\mc{G}|_\ell:\ell\to\mb{R}\mb{P}^{n-1}.\]
The image $\mc{G}(\ell)\subset\mb{R}\mb{P}^{n-1}$ is a rational curve of degree $2n-2$, which we refer to as the \emph{Gauß curve} of $\ell$. For $n\geq 3$, a generic degree $2n-2$ rational curve in $\mb{R}\mb{P}^{n-1}$ has a finite number of $(n-3)$-dimensional $(2n-4)$-secants; in fact, there are $\binom{n}{2}$ such secants defined over $\mathbb{C}$. 

Each secant defined over $\mathbb{R}$ determines a \emph{Segre involution} as follows. Let $S$ be an $(n-3)$-plane meeting $\mc{G}(\ell)$ in $2n-4$ points defined over $\mathbb{R}$. There is a pencil of $(n-2)$-dimensional (i.e.~codimension 1 in $\mb{P}^{n-1}$) hyperplanes containing $S$. Each $(n-2)$-plane in this pencil meets the Gauß curve $\mc{G}(\ell)$ in $2n-2$ points, by B\'ezout's theorem: the $2n-4$ secant points given by $S\cap\mc{G}(\ell)$, and another pair of points. The Segre involution associated to $S$ swaps this residual pair of points, as depicted in Figure~\ref{fig:segre}.\footnote{The plane quartic in Figure~\ref{fig:n=3} is the well-known ampersand curve. To construct a rational space sextic curve (like the one in Figure~\ref{fig:n=4}) and its six quadrisecants, it helps to use a remarkable theorem of Dye stating that such quadrisecants form half of a double six of lines on a cubic surface containing the sextic \cite{Dye97}. See Remark~\ref{rem:quadrisecants} for more details.\label{fn:curves}}

\begin{figure}
\begin{subfigure}{.5\linewidth}
\centering
\begin{tikzpicture}
  \node[inner sep=0pt] (amp) at (0,0){\includegraphics[width=112.317335pt]{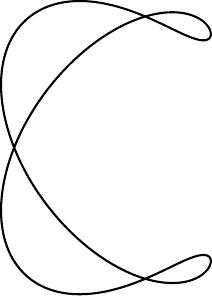}};
  \node[red] at (.8,2) {$S$};
  \node at (-2.5,1.5) {$\mc{G}(\ell)$};
  \node (p) at (.71-1.77,2.4-2*1.77) {};
  \node (q) at (.71-2.325,2.4-2*2.325) {};
  \draw[very thick,cyan,opacity=.75] (.71+.5,2.4+1) -- (.71-3,2.4-6);
  \draw[red,fill=red] (.726,2.432) circle (3pt);
  \draw[cyan,fill=cyan] (p) circle (3pt);
  \draw[cyan,fill=cyan] (q) circle (3pt);
  \draw [<->,very thick,teal] (p) to [out=300,in=0] (q);
  \node[teal] at (-.7,-2.2) {$i_S$};
\end{tikzpicture}
\caption{$n=3$}\label{fig:n=3}
\end{subfigure}%
\begin{subfigure}{.5\linewidth}
\centering
\includegraphics{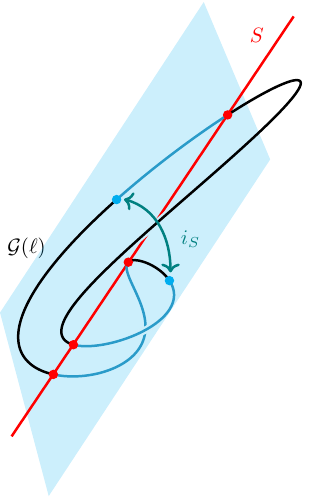}
\caption{$n=4$}\label{fig:n=4}
\end{subfigure}
\caption{Segre involutions}\label{fig:segre}
\end{figure}

The \emph{Segre index} of $\ell$, which we denote $\seg(X,\ell)$, is a product of indices associated to the Segre involutions on $\mc{G}(\ell)$ corresponding to the secant defined over $\mathbb{R}$. This Gauß curve is the embedding of a conic $Q_\ell\subset\mb{R}\mb{P}^2$, and the Segre involution associated to a secant $S$ pulls back to an involution on $Q_\ell$. Define the index of this involution, denoted $\seg_S(X,\ell)$, to be $+1$ if its fixed points are real and $-1$ if its fixed points are complex. Then
\[\seg(X,\ell):=\prod_{\text{real secants }S}\seg_S(X,\ell),\]
and Finashin and Kharlamov proved that
\begin{equation}\label{eq:fk signed count}
\sum_{\ell\subset X}\seg(X,\ell)=(2n-1)!!
\end{equation}
for a generic degree $2n-1$ hypersurface in $\mb{RP}^{n+1}$. Note that Equation~\ref{eq:real cubics} is the $n=2$ case of Equation~\ref{eq:fk signed count}. For $n=2$, the Segre index agrees with the convention that assigns $+1$ to hyperbolic lines and $-1$ to elliptic lines on a cubic surface.

\subsubsection{The arithmetic story}
Our goal (stated previously as Theorem~\ref{thm:arithmetic count}) is to prove an analog of Equation~\ref{eq:fk signed count} over an arbitrary base field. The framework for such a theorem is the \emph{enriched enumerative geometry} program, in which Kass and Wickelgren's count of lines on cubic surfaces is an early seminal result \cite{cubicsurface}.

Let $k$ be a field. We assume that $\operatorname{char}{k}\neq 2$, as we want involutions to behave well over $k$. Let $\GW(k)$ denote the Grothendieck--Witt ring of isomorphism classes of symmetric non-degenerate bilinear forms. This ring is generated by classes of the form $\langle a\rangle = [(x,y)\mapsto axy]$ for $a\in k^\times$. Kass and Wickelgren defined an Euler ``number'' $e(\Sym^3(\mc{S}^\vee))\in\GW(k)$ that satisfies a Poincar\'e--Hopf theorem: given a smooth cubic surface $X$ over $k$, the associated section $\sigma_X:\mb{G}(1,3)\to\Sym^3(\mc{S}^\vee)$ has local indices $\ind_\ell\sigma_X\in\GW(k)$ such that
\begin{equation}\label{eq:partial arith cubic}
\sum_{\ell\subset X}\ind_\ell\sigma_X=e(\Sym^3(\mc{S}^\vee)).
\end{equation}
In order to turn Equation~\ref{eq:partial arith cubic} into an enumerative theorem, one needs to compute the Euler number on the right hand side and give a geometric interpretation of the local indices on the left hand side. Kass and Wickelgren computed
\[e(\Sym^3(\mc{S}^\vee))=15\langle 1\rangle+12\langle -1\rangle\]
and proved that $\ind_\ell\sigma_X$ is determined by the Segre involution associated to $\ell$. Just as the Segre involution of a real line has either real or complex fixed points, the fixed points of the Segre involution on a line with field of definition $k(\ell)$ are defined over $k(\ell)(\sqrt{\alpha_\ell})$ for some $\alpha_\ell\in k(\ell)^\times/(k(\ell)^\times)^2$. Kass and Wickelgren proved that
\[\ind_\ell\sigma_X=\Tr_{k(\ell)/k}\langle \alpha_\ell\rangle\in\GW(k),\]
giving the desired geometric description of the relevant local indices. Altogether, we get the count
\begin{equation}\label{eq:arith cubic}
\sum_{\ell\subset X}\Tr_{k(\ell)/k}\langle \alpha_\ell\rangle=15\langle 1\rangle+12\langle -1\rangle.
\end{equation}
One of the key features of Equation~\ref{eq:arith cubic} is that it generalizes both the complex and real counts of lines on cubic surfaces. The rank of Equation~\ref{eq:arith cubic} states that
\[\sum_{\ell\subset X_{\overline{k}}}1=27,\]
counting 27 geometric lines on every smooth cubic surface. When $k$ admits a real embedding, the signature of Equation~\ref{eq:arith cubic} exactly recovers Equation~\ref{eq:real cubics}, as hyperbolic and elliptic lines respectively satisfy $\alpha_\ell=+1$ and $\alpha_\ell=-1$, while the signature of $\Tr_{\mb{C}/\mb{R}}\langle\alpha_\ell\rangle$ is 0 for any complex line.

Lines on quintic threefolds over general fields were considered by Levine and Pauli. Levine \cite[Example 8.3]{LevineMotivicEulerCharacteristic} computed
\[e(\Sym^5(\mc{S}^\vee))=1445\langle 1\rangle+1430\langle -1\rangle,\] 
while Pauli \cite{PauliQuadraticTypes} provided a geometric interpretation of the local indices contributing to this Euler number. The geometric interpretation is a quadratic enrichment of (the $n=3$ case of) the Segre index described in Section~\ref{sec:real story}. In this case, the Gauß curve $\mc{G}(\ell)$ associated to a line $\ell$ is a plane quartic, and geometrically there are $\binom{3}{2}=3$ planes of dimension $3-3=0$ that meet this quartic with order $2\cdot 3-4=2$. In other words, $\mc{G}(\ell)$ is a plane quartic with 3 nodes. The Segre involution 
\[i_\nu:\mc{G}(\ell)\to\mc{G}(\ell)\]
associated to a node $\nu\in\mc{G}(\ell)$ is given by taking a pencil of lines through $\nu$, intersecting each line $L$ with $\mc{G}(\ell)$, and swapping the pair of points in $\mc{G}(\ell)\cap L-\nu$ (see Figure~\ref{fig:n=3}, where $S=\nu$). The fixed points of $i_\nu$ are defined over $k(\nu)(\sqrt{\alpha_\nu})$ for some $\alpha_\nu\in k(\nu)^\times/(k(\nu)^\times)^2$, and the Segre index of $\ell$ is defined as
\[\seg(X,\ell):=\Tr_{k(\ell)/k}\big\langle\prod_{\nu} \N_{k(\nu)/k(\ell)}(\alpha_{\nu})\big\rangle.\]

\begin{rem}
While we have described the Segre involution associated to a node $\nu$ as an involution of the form $i_\nu:\mc{G}(\ell)\to\mc{G}(\ell)$, one can equivalently consider the pullback of $i_\nu$ to an involution $\ell$. We will prove that the fixed points of the involutions on $\mc{G}(\ell)$ and $\ell$ have isomorphic residue fields. This will imply that $\seg(X,\ell)$ is independent of whether we think of the involution occurring on $\mc{G}(\ell)$ or $\ell$, so we will generally not distinguish between these two perspectives.
\end{rem}

Already at the time of \cite{PauliQuadraticTypes}, it was expected that a similar description of the local index in terms of Segre involutions should hold in the general case of lines on degree $2n-1$ hypersurfaces in $\mb{P}^{n+1}$. However, the proof in \cite{PauliQuadraticTypes} utilizes the fact that for a general rational plane quartic, its trio of nodes lie in general position (i.e.~not on a line). It is not true that the $(n-3)$-dimensional $(2n-4)$-secants to a general degree $2n-2$ space curve in $\mb{P}^{n-1}$ are in general position when $n>3$, since $\binom{n}{2}>\dim\mb{G}(n-3,n-1)+1$ in this range. Furthermore, the proofs in \cite{FinashinKharlamov2021} rely on wall-crossing arguments, which do not readily generalize to other fields.

As we have claimed in Theorem~\ref{thm:local index}, the quadratic Segre index of a line indeed computes the local index contributing to the Euler class $e(\Sym^{2n-1}(\mc{S}^\vee))$. In the next subsection, we will outline the proof ideas that allowed us to sidestep the difficulties that arise when trying to generalize from $\mb{R}$ to $k$ or from $n=3$ to $n>3$.

\subsection{Ideas behind the proof}\label{sec:ideas}
As previously mentioned, we use a construction of Finashin and Kharlamov \cite[Section 6]{FinashinKharlamov2021} to relate lines on hypersurfaces to involutions arising from the secants of their associated Gauß curves, and equivalently to involutions associated to plane conics avoiding an auxiliary set of points. This construction allows us to generalize beyond the cases of cubic surfaces \cite{cubicsurface} and quintic threefolds \cite{PauliQuadraticTypes}.

In order to show that the local index of a line is equal to the product of Segre indices associated to the Gauß curve or plane conic, we essentially need to show that two determinants are equal up to squares\footnote{We say that two elements $x,y$ of a field $K$ are \emph{equal up to squares} if there exists $c\in K^\times$ such that $x=c^2y$.}. Over $\mb{R}$, this boils down to showing that two real numbers have the same sign; this is where Finashin--Kharlamov utilize a wall-crossing argument. Over arbitrary fields, we instead will show that these two determinants are given by regular maps with the same zero locus. This step more or less falls out of the geometry, which will allow us to express these regular maps in terms of resultants. The primary difficulties arise from the fact that conics over non-closed fields need not have rational points. We overcome this difficulty by modifying Finashin and Kharlamov's construction and working with \emph{parameterized} conics.

The final step is to show that the relevant regular maps agree at a point outside of their zero locus. This is done with an elementary, although somewhat lengthy, argument involving only basic linear algebra.

\subsection{Outline}
Here is a quick outline of the article.
\begin{itemize}
    \item In Section~\ref{sec:local index}, we discuss the bundle $\Sym^{2n-1}(\mc{S}^\vee)\to\mb{G}(1,n+1)$, the section induced by a hypersurface, our choice of coordinates on $\mb{G}(1,n+1)$, and our local trivializations of $\Sym^{2n-1}(\mc{S}^\vee)$. We then compute the local index and give examples discussing the cases of cubic surfaces and quintic threefolds.
    \item In Section~\ref{sec: types of lines 2}, we discuss Finashin and Kharlamov's conic models for rational space curves. This includes a dictionary translating between Gauß curves with their secants and plane conics with their associated loci of points. We also prove that the index of a Gauß curve can be computed in terms of a conic model.
    \item We conclude in Section~\ref{sec:local = segre} by proving that the local index (given in Section~\ref{sec:local index}) is equal to the index of a conic model (given in Section~\ref{sec: types of lines 2}), thereby proving Theorem~\ref{thm:local index}.
\end{itemize}

\subsection{Conventions}
We assume throughout that $k$ is a perfect field of characteristic not 2. This is the generality in which we prove Theorem~\ref{thm:local index}. Theorem~\ref{thm:arithmetic count} is a corollary of Theorem~\ref{thm:local index}, but we need to assume $\op{char}{k}=0$ for this corollary in order to apply Kleiman's transversality theorem \cite{Kle74}. Conjecturally, one should be able to remove this assumption (see Remark~\ref{rem:sottile}).

Whenever we write $\overline{k}$, we mean some fixed algebraic closure of $k$.

\subsection*{Acknowledgements}
We thank Sergey Finashin and Viatcheslav Kharlamov for helpful correspondence, including explaining the ideas behind Lemma~\ref{lem:generic} to us. We also thank Kirsten Wickelgren for helpful discussions.

We thank the anonymous referee for pointing out a mistake and for their extensive helpful feedback.

Felipe Espreafico acknowledges support of the European Research Council through the grant ROGW-864919.

Stephen McKean received support from the National Science Foundation (DMS-2502365) and the Simons Foundation.

Sabrina Pauli acknowledges support by Deutsche Forschungsgemeinschaft (DFG, German Research Foundation) through the Collaborative
Research Centre TRR 326 Geometry and Arithmetic of Uniformized Structures, project number 444845124.

\section{The local index}\label{sec:local index}
In this section, we will recall the definition and computation of the local index of a line on a degree $2n-1$ hypersurface in $\mb{P}^{n+1}$. The procedure for computing this index is standard, so the material in this section will be standard as well. It is the interpretation of this index in terms of the geometry at hand that is interesting and will comprise the balance of the article.

Let $k$ be a field. Let $\mb{G}(1,n+1)$ denote the Grassmannian of projective lines in $\mb{P}^{n+1}_k$. This Grassmannian is isomorphic to $\G(2,n+2)$, the Grassmannian of affine 2-planes in affine $(n+2)$-space. Let $\mc{S}\to\G(2,n+2)$ denote the tautological bundle, which we can also view as a rank 2 vector bundle $\mc{S}\to\mb{G}(1,n+1)$. The rank of the bundle
\[\Sym^{2n-1}(\mc{S}^\vee)\to\mb{G}(1,n+1)\]
is equal to $\binom{2n-1+2-1}{2n-1}=2n=\dim\mb{G}(1,n+1)$, so a generic section of this bundle should have a finite vanishing locus. Moreover, any degree $2n-1$ form $F$ on $\mb{P}^{n+1}_k$ determines a section
\begin{align*}
    \sigma_F:\mb{G}(1,n+1)&\to\Sym^{2n-1}(\mc{S}^\vee)\\
    \ell&\mapsto F|_\ell
\end{align*}
that vanishes precisely on those lines contained in the hypersurface $\mb{V}(F)$. One can then obtain an enriched count of the lines on $\mb{V}(F)$ by computing the Euler number $e(\Sym^{2n-1}(\mc{S}^\vee))\in\GW(k)$, using a Poincar\'e--Hopf formula to express $e(\Sym^{2n-1}(\mc{S}^\vee))$ as a sum of local degrees of $\sigma_F$ over its vanishing locus, and giving a geometric interpretation to these local indices. The Euler number and Poincar\'e--Hopf formula for this bundle are given in \cite[Theorem~1.1 and Corollary~6.9]{BW23}. 

In order to compute the local indices of $\sigma_F$, we need to make a suitable choice of coordinates on $\mb{G}(1,n+1)$ and local trivializations of $\Sym^{2n-1}(\mc{S}^\vee)$. For coordinates, we give coordinates centered about each line. Given a line $\ell\in\mb{G}(1,n+1)$, we base change to the field of definition $k(\ell)$ and pick coordinates $[u:v:x_1:\ldots:x_n]$ of $\mb{P}^{n+1}_{k(\ell)}$ such that $\ell=\mb{V}(x_1,\ldots,x_n)$. Now $[u:v]$ are the coordinates on $\ell$, and we have an open affine $U_\ell\subset\mb{G}(1,n+1)$ with coordinates $(a_1,b_1,\ldots,a_n,b_n)$ corresponding to lines of the (parametric) form
\[\{[u:v:a_1u+b_1v:\ldots:a_nu+b_nv]:[u,v]\in\mb{P}^1\}.\]
We obtain a local trivialization of $\Sym^{2n-1}(\mc{S}^\vee)$ by reading off the coefficients in the monomial basis $\{u^{2n-1},u^{2n-2}v,\ldots,v^{2n-1}\}$.

\begin{rem}\label{rem:base change}
Our use of base change to define coordinates may appear problematic, but no problems actually arise. The field of definition $k(\ell)$ is always a separable extension of $k$ (since we have assumed $k$ is perfect), and the local index can always be computed by ``changing base and taking trace'' over separable extensions \cite{BBMMO21}. 
\end{rem}

Our chosen coordinates and trivializations need to be compatible (in a precise sense) with a chosen \emph{relative orientation} of the bundle $\Sym^{2n-1}(\mc{S}^\vee)\to\mb{G}(1,n+1)$, which is an isomorphism
\[\det\Sym^{2n-1}(\mc{S}^\vee)\otimes\omega_{\mb{G}(1,n+1)}\cong\mc{L}^{\otimes 2}\]
for some line bundle $\mc{L}\to\mb{G}(1,n+1)$. Such an isomorphism need not exist for a general vector bundle, but it is a fact that $\Sym^{2n-1}(\mc{S}^\vee)\to\mb{G}(1,n+1)$ is relatively orientable (see e.g.~\cite[Corollary~7]{OkonekTeleman} and~\cite[Corollary~6.9]{BW23}). We will not explicitly show that our coordinates and trivialization are compatible with this relative orientation, but rather simply refer to \cite[Section 5]{cubicsurface} and \cite[Section 2.2]{PauliQuadraticTypes} for a demonstration of the proof in the $n=2$ and $n=3$ cases. The $n>3$ cases are all completely analogous.

\subsection{Computing the local index}
Because $\ell$ is a simple zero of $\sigma_F$ \cite[Theorem~6.34]{3264}, the local index $\ind_\ell\sigma_F$ can be computed by using our coordinates and trivializations to write out $\sigma_F$ as a polynomial in $(a_1,b_1,\ldots,a_n,b_n)$ and taking the Jacobian determinant of this polynomial \cite[Lemma~9]{KW19} and evaluating at $(0,0,\ldots,0,0)$.

By our choice of coordinates $[u:v:x_1:\ldots:x_n]$ on $\mb{P}^{n+1}$, we may assume that
\[F=x_1P_1(u,v)+x_2P_2(u,v)+\ldots+x_nP_n(u,v)+R(u,v,x_1,\ldots,x_n),\]
where $P_i$ are homogeneous polynomials of degree $2n-2$ and $R$ is a polynomial in the ideal $(x_1,\ldots,x_n)^2$. In our coordinates $(a_1,b_1,\ldots,a_n,b_n)$ on $U_\ell$, the section $\sigma_F$ becomes the polynomial
\[\sigma_F(\mathbf{a},\mathbf{b})=\sum_{i=1}^n(a_iu+b_iv)P_i(u,v)+R(a_1u+b_1v,\ldots,a_nu+b_nv).\]
The partial derivatives evaluated at $(0,\ldots,0)$ are now straightforward to compute:
\[\frac{\partial\sigma_F}{\partial a_i}\bigg|_{(0,\ldots,0)}=uP_i(u,v)\qquad\text{and}\qquad\frac{\partial\sigma_F}{\partial b_i}\bigg|_{(0,\ldots,0)}=vP_i(u,v).\]
To compute the Jacobian matrix, it thus suffices to write out each $P_i$ in terms of the monomial basis $\{u^{2n-1},u^{2n-2}v,\ldots,v^{2n-1}\}$, as this is our local trivialization of $\Sym^{2n-1}(\mc{S}^\vee)$. So let $p_{j,i}\in k(\ell)$ be coefficients (for $1\leq i\leq n$ and $0\leq j\leq 2n-2$) such that
\[P_i(u,v)=\sum_{j=0}^{2n-2}p_{j,i}u^jv^{2n-2-j}.\]
Then the Jacobian matrix of $\sigma_F$ at $(0,\ldots,0)$ is
\begin{equation}\label{equation: matrix of index}
A_{P_1,\cdots,P_n}:=\begin{pmatrix}
p_{2n-2,1} & 0 & p_{2n-2,2}& 0&\cdots& p_{2n-2,n}& 0\\
p_{2n-3,1} & p_{2n-2,1} & p_{2n-3,2}& p_{2n-2,2}&\cdots& p_{2n-3,n}& p_{2n-2,n}\\
\vdots & \vdots & \vdots& \vdots& \ddots& \vdots&\vdots\\
p_{0,1} & p_{1,1} & p_{0,2}& p_{1,2}&\cdots& p_{0,n}& p_{1,n}\\
0 & p_{0,1} & 0& p_{0,2}&\cdots& 0& p_{0,n} 
\end{pmatrix}.
\end{equation}
Now
\begin{equation}\label{eq:index as determinant}
    \ind_\ell\sigma_F=\Tr_{k(\ell)/k}\langle\det A_{P_1,\ldots,P_n}\rangle \in \GW(k).
\end{equation}

\subsection{Examples: lines on cubic surfaces and quintic threefolds}
We conclude this section by writing out the Jacobian matrix for the local index in the cases of lines on cubic surfaces and quintic threefolds. We will then sketch how Kass--Wickelgren and Pauli gave geometric interpretations of these determinants and what fails in the general case.

\begin{ex}[Cubic surfaces]\label{ex: cubic}
For cubic surfaces, we have $\ind_\ell\sigma_F=\Tr_{k(\ell)/k}\langle\det A_{P_1,P_2}\rangle$ with
\[A_{P_1,P_2}=\begin{pmatrix}
    p_{2,1} & 0 & p_{2,2} & 0\\
    p_{1,1} & p_{2,1}& p_{1,2} & p_{2,2}\\
    p_{0,1} & p_{1,1} & p_{0,2} & p_{1,2}\\
    0 & p_{0,1} & 0 & p_{0,2}
\end{pmatrix}.\]
Here, we have $F=x_1P_1+x_2P_2+R$ with $P_i(u,v)=p_{2,i}u^2+p_{1,i}uv+p_{0,1}v^2$. Note that $\det A_{P_1,P_2}$ is equal to the resultant of $P_1$ and $P_2$, which is where Kass and Wickelgren's geometric interpretation begins. The Gauß map along $\ell$ can be identified with the degree 2 map
\begin{align*}
    \mc{G}:\ell&\to\mb{P}^1_{k(\ell)}\\
    [u:v]&\mapsto[P_1(u,v):P_2(u,v)].
\end{align*}
In particular, for each point $p\in\ell$ there is another point $q$ such that $T_p\mb{V}(F)=T_q\mb{V}(F)$. The Segre involution $i:\ell\to\ell$ swaps $p$ and $q$. The fixed points of $i$ are defined over a quadratic field extension $k(\ell)(\sqrt{\alpha})$ for some $\alpha\in k(\ell)^\times/(k(\ell)^\times)^2$, and the Segre index of $\ell$ is $\Tr_{k(\ell)/k}\langle\alpha\rangle$.

To show that this Segre index describes $\ind_\ell\sigma_F$, we need to show that $\alpha$ and $\Res(P_1,P_2)$ agree up to squares. This is proved in \cite[Proposition~14]{cubicsurface}, but we recall the details here since we use slightly different language. The fixed points of the Segre involution are given by the ramification locus of $[P_1:P_2]$, namely the vanishing locus of the Jacobian determinant
\[\frac{\partial P_1}{\partial u}\cdot\frac{\partial P_2}{\partial v}-\frac{\partial P_2}{\partial u}\cdot\frac{\partial P_1}{\partial v}.\]
This Jacobian vanishes precisely if $F(u,v):=x_1P_2(u,v)-x_2P_1(u,v)$ has a multiple root  for some $(x_1,x_2)\neq(0,0)$. We thus calculate the discriminant $\Disc_{u,v}(x_1P_2(u,v)-x_2P_1(u,v))$, which is a quadratic function in $x_1,x_2$. The zeros of this discriminant, and hence the ramification locus of $[P_1:P_2]$, are defined over $k(\ell)(\sqrt{\alpha})$, where
\[\alpha=\Disc_{x_1,x_2}(\Disc_{u,v}(x_1P_2(u,v)-x_2P_1(u,v))).\]
We conclude by computing $16\cdot\Res(P_1,P_2)=\alpha$.
\end{ex}

For $n>2$, the determinant $\det A_{P_1,\ldots,P_n}$ is not a resultant, which explains why \cite{cubicsurface} does not generalize. However, it turns out that $\det A_{P_1,\ldots,P_n}$ is always a \emph{product} of resultants. We had wondered if this were true after Pauli treated the case of lines on quintic threefolds:

\begin{ex}[Quintic threefolds]\label{ex: quintic}
For quintic threefolds, we need to consider the determinant of
\[A_{P_1,P_2,P_3}=\begin{pmatrix}
    p_{4,1} & 0 & p_{4,2} & 0 & p_{4,3} & 0\\
    p_{3,1} & p_{4,1} & p_{3,2} & p_{4,2} & p_{3,3} & p_{4,3}\\
    p_{2,1} & p_{3,1} & p_{2,2} & p_{2,3} & p_{3,3} & p_{3,3}\\
    p_{1,1} & p_{2,1}& p_{1,2} & p_{2,2} & p_{1,3} & p_{2,3}\\
    p_{0,1} & p_{1,1} & p_{0,2} & p_{1,2} & p_{0,3} & p_{1,3}\\
    0 & p_{0,1} & 0 & p_{0,2} & 0 & p_{0,3}
\end{pmatrix}.\]
Here, we have $F=x_1P_1+x_2P_2+x_3P_3+R$ with $P_i(u,v)=p_{4,i}u^4v+p_{3,i}u^3v^2+p_{2,i}u^2v^3+p_{1,i}uv^4+p_{0,i}v^5$. Now the image of the Gauß map
\begin{align*}
    \mc{G}:\ell&\to\mb{P}^2_{k(\ell)}\\
    [u:v]&\mapsto[P_1(u,v):P_2(u,v):P_3(u,v)]
\end{align*}
is a rational plane quartic curve. A general rational plane quartic has three nodes $\nu_1,\nu_2,\nu_3$ defined over the algebraic closure $\overline{k}$. Over $k(\ell)$ we could have $1$, $2$ or $3$ nodes, in each case the sum of the degrees of the residue fields of the nodes over $k(\ell)$ equals $3$. That is, $\sum_{\text{nodes }\nu}[k(\nu):k(\ell)]=3$.

Each node $\nu$ defines a degree 2 divisor $D_{\nu}$ on $\ell_{k(\nu)}$ as follows. Consider the pencil $H^\nu_t$ of lines in $\mb{P}^2_{k(\nu)}$ through $\nu$. Then $H^\nu_t\cap\mc{G}(\ell)$ defines a pencil of degree 4 divisors $D_{\nu}+D^\nu_t$ on $\ell_{k(\nu)}$. 

By \cite[Lemma~3.1]{PauliQuadraticTypes}, the pencil $D^\nu_t$ is base point free when $\ell$ is a simple (i.e.~reduced) line and thus defines a degree 2 map of the form $[Q^\nu_1:Q^\nu_2]:\mb{P}^1_{k(\nu)}\to\mb{P}^1_{k(\nu)}$. The non-trivial element of the Galois group of the double cover $[Q^\nu_1:Q^\nu_2]$ gives an involution on $\ell_{k(\nu)}$, which we again call the \emph{Segre involution}. Geometrically, the Segre involution swaps the pairs of points in the pencil $D^\nu_t$ (see Figure~\ref{fig:n=3}). The fixed points of the Segre involution are defined over $k(\nu)(\sqrt{\alpha_\nu})$ for some $\alpha_\nu\in k(\nu)^\times/(k(\nu)^\times)^2$, which can be computed as the resultant $\alpha_\nu=\Res(Q^\nu_1,Q^\nu_2)$ by the same argument as above for $n=2$. The \emph{index} of this Segre involution is given by the norm 
\[\N_{k(\nu)/k(\ell)}\alpha_i=\N_{k(\nu)/k(\ell)}\Res(Q_1^\nu,Q_2^\nu).\]
We want to identify the local index $\ind_\ell(\sigma_F)=\Tr_{k(\ell)/k}\langle\det A_{P_1,P_2,P_3}\rangle$
with the trace of the product of Segre indices
\[\Tr_{k(\ell)/k}\langle\prod_{\text{nodes }\nu}\N_{k(\nu)/k(\ell)}\Res(Q_1^\nu,Q_2^\nu)\rangle.\]
That is, we need to show that
\[\langle\det A_{P_1,P_2,P_3}\rangle=\langle\prod_{\nu}\N_{k(\nu)/k(\ell)}\Res(Q_1^\nu,Q_2^\nu)\rangle\]
in $\GW(k(\ell))$.

In this example we assume that $\nu_1,\nu_2,\nu_3$ are all defined over $k(\ell)$. We can then use a change of coordinates to assume that
\begin{align*}
    \nu_1&=[1:0:0],\\
    \nu_2&=[0:1:0],\\
    \nu_3&=[0:0:1].
\end{align*}
Under this assumption, the quartic polynomials $P_i,P_j$ have two common zeros for $i\neq j$, and hence there exist quadratic polynomials $Q_1,Q_2,Q_3$ such that
\begin{align*}
    P_1&=Q_2Q_3,\\
    P_2&=Q_1Q_3,\\
    P_3&=Q_1Q_2.
\end{align*}
In this case, one can prove that
\begin{align*}
    [Q^1_1:Q^1_2]&=[Q_2:Q_3],\\
    [Q^2_1:Q^2_2]&=[Q_1:Q_3],\\
    [Q^3_1:Q^3_2]&=[Q_1:Q_2]
\end{align*}
and $\det A_{P_1,P_2,P_3}=\prod_{i<j}\Res(Q_i,Q_j)$, thereby proving that the local index is the product of the Segre indices.

Our use of coordinate change to write $\nu_1,\nu_2,\nu_3$ in this simple form relies on the assumption that these nodes are all defined over $k(\ell)$. This need not be the case --- in general, $\nu_1,\nu_2,\nu_3$ will be Galois conjugates over $\overline{k}$. We can then use a coordinate change to again express $P_1,P_2,P_3$ as products of quadratic polynomials $Q_1,Q_2,Q_3$, with these quadratics being Galois conjugate. See \cite{PauliQuadraticTypes} for more details.
\end{ex}

For $n>3$, we can no longer use a projective change of coordinates to reduce to a particular case as was done for quintic threefolds. This is the reason that \cite{PauliQuadraticTypes} does not admit an obvious generalization to lines on hypersurfaces of greater dimension and degree. A construction of Finashin and Kharlamov will allow us to sidestep this issue. This construction uses what we call \emph{conic models}, which we introduce in the next section.

\section{Conic models for the Gauß curve}\label{sec: types of lines 2}
Our goal in this section is to define \emph{conic models} for Gauß curves. This construction appears in \cite[$\S6$]{FinashinKharlamov2021}, and we will use it to show that the local index is the same as the product of Segre indices. In particular, this construction allows us to see that the determinant $\det A_{P_1,\cdots,P_n}$ (see Equation~\ref{equation: matrix of index}) is always a product of resultants.

\begin{rem}\label{rem:n geq 3}
    For the remainder of the article, we will assume $n\geq 3$. Most of the constructions from here on out only make sense under this assumption.
\end{rem}

We start by recalling the definition of the Segre index in general, following the same notation used in Examples \ref{ex: cubic} and \ref{ex: quintic}. This was already defined in the real case in \cite{FinashinKharlamov2021} and there are no major technical differences in passing to a general field. The main tool we will need is a result attributed to Castelnuovo, which states that there are finitely many $(n-3)$-dimensional $(2n-4)$-secants to a generic degree $2n-2$ rational curve in $\mb{P}^{n-1}$. This generalizes the fact that a general rational quartic plane curve has three nodes. Segre involutions and their associated indices are constructed from these finite sets of secants.

Finally, we will dive into conic models for Gauß curves: to each Gauß curve and its Castelnuovo secants, we can associate a set of points $B$ and a conic $Q$ in $\mathbb P^2_{k(\ell)}$. We will show that the local index and the Segre index can be computed purely in terms of a conic model.  

\subsection{Secants and the Segre index}\label{subsec: Gauss map}
Recall our setting. Let $X\subset \mathbb P^{n+1}$ be a generic hypersurface of degree $2n-1$, and fix a line $\ell\subseteq X$. Without loss of generality, we may choose coordinates $[u:v:x_1:\ldots:x_n]$ on $\mb{P}^{n+1}_{k(\ell)}$ such that $\ell = \{[u:v:x_1:\ldots:x_n]:x_1 = \cdots = x_n = 0\}$ and $X=\mb{V}(F)$ for
\[F = x_1P_1(u,v)+x_2P_2(u,v)+\ldots+x_nP_n(u,v)+R(u,v,x_1,\ldots,x_n),\]
where $P_1,\ldots,P_n\in k(\ell)[u,v]$ are homogeneous polynomials of degree $2n-2$ and $R\in(x_1,\ldots,x_n)^2\subseteq k(\ell)[u,v,x_1,\ldots,x_n]$ is also of degree $2n-2$.

Following \cite{FinashinKharlamov2021}, the Gauß map along $\ell$ is given by
\begin{align}\label{equation: gauss-map}
\begin{split}
    \gauss: \mathbb{P}^1_{k(\ell)}&\to \mathbb{P}^{n-1}_{k(\ell)} \\
    [u:v] &\mapsto [P_1(u,v):\ldots:P_n(u,v)].
\end{split}
\end{align}

\begin{defn}
We call the image of $\ell$ under the Gauß map the \emph{Gauß curve} associated to $\ell$. This is a rational curve
    \[\mc{G}(\ell)\subseteq\mb{P}^{n-1}\]
    of degree $2n-2$.
\end{defn}

We are interested in the $(n-3)$-dimensional $(2n-4)$-secants to the Gauß curve, i.e.~the $(n-3)$-planes that meet the Gauß curve in $2n-4$ points (when counted with multiplicity, i.e.~the intersection of the $(n-3)$-plane with the Gauß curve is a 0-dimensional scheme of degree $2n-4$). Finashin--Kharlamov state that there are (geometrically) $\binom{n}{2}$ such secants \cite[Proposition 4.3.3]{FinashinKharlamov2021}, which they refer to as the \emph{Castelnuovo count}.

\begin{prop}[Castelnuovo count]\label{prop: castelnuovo}
Let $k$ be a field of characteristic 0. The number of $(n-3)$-dimensional $(2n-4)$-secants (defined over $\overline{k}$) to a generic rational curve $C\subset\mb{P}^{n-1}$ of degree $2n-2$ is $\binom{n}{2}$.
\end{prop}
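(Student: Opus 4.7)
My plan is to realize the $(2n-4)$-secant $(n-3)$-planes as a Porteous-style degeneracy locus on the symmetric product parametrizing degree $2n-4$ divisors on the source of $C$. Write $C$ as the image of a morphism $\mc{G}\colon\mb{P}^1\to\mb{P}^{n-1}$, and let $W\subset k[t]_{\leq 2n-2}$ be the $n$-dimensional linear system pulling back hyperplanes in $\mb{P}^{n-1}$. Since an $(n-3)$-plane $S\subset\mb{P}^{n-1}$ is the intersection of two hyperplanes, $S$ is a $(2n-4)$-secant to $C$ if and only if the two corresponding polynomials in $W$ share a common factor $E$ of degree $2n-4$; equivalently, the divisor $D=\op{div}(E)\in \Sym^{2n-4}\mb{P}^1\cong\mb{P}^{2n-4}$ satisfies $\dim(W\cap H^0(\mb{P}^1,\mc{O}(2n-2)(-D)))\geq 2$. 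For generic $C$ this intersection has dimension exactly $2$, so each such $D$ determines a unique $S$, namely the base locus of the corresponding pencil of hyperplanes, and the count of secant planes reduces to the count of points of a degeneracy locus on $X:=\Sym^{2n-4}\mb{P}^1$.

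For the degeneracy setup, on $X$ I let $\mc{A}$ denote the rank 3 bundle with fiber $H^0(\mb{P}^1,\mc{O}(2n-2)(-D))$ over $D$, and $\mc{B}$ the trivial rank $(n-1)$ bundle with fiber $k[t]_{\leq 2n-2}/W$. Inclusion followed by projection defines a morphism $\phi\colon\mc{A}\to\mc{B}$ whose kernel at $D$ is exactly $W\cap H^0(\mb{P}^1,\mc{O}(2n-2)(-D))$, and the locus we want is $\Sigma_1=\{\op{rank}\phi\leq 1\}$. Its expected codimension is $(3-1)(n-1-1)=2n-4=\dim X$, so generically $\Sigma_1$ is finite.

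To compute Chern classes, I would push forward the restriction sequence of the universal divisor $\mc{D}\subset X\times\mb{P}^1$ after twisting by the pullback of $\mc{O}_{\mb{P}^1}(2n-2)$. Using $\mc{O}_{X\times\mb{P}^1}(-\mc{D})\cong\mc{O}_X(-1)\boxtimes\mc{O}_{\mb{P}^1}(-(2n-4))$ and the projection formula, one obtains $\mc{A}\cong\mc{O}_X(-1)^{\oplus 3}$. Writing $h=c_1(\mc{O}_X(1))$, this gives $c(\mc{B}-\mc{A})=(1-h)^{-3}=\sum_{k\geq 0}\binom{k+2}{2}h^k$, so the Porteous formula yields
\[[\Sigma_1]=\det\begin{pmatrix} c_{n-2} & c_{n-1} \\ c_{n-3} & c_{n-2} \end{pmatrix}=\left[\binom{n}{2}^2-\binom{n-1}{2}\binom{n+1}{2}\right]h^{2n-4}=\binom{n}{2}\cdot h^{2n-4},\]
where the last equality is the elementary identity $\binom{n}{2}^2-\binom{n-1}{2}\binom{n+1}{2}=\binom{n}{2}$.

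The main obstacle is transversality: for a generic $C$ the locus $\Sigma_1$ must be reduced and zero-dimensional, and disjoint from the deeper stratum $\Sigma_0$ (where the kernel jumps to dimension $3$), so that the Porteous class genuinely counts $\binom{n}{2}$ distinct secant planes each contributing one $S$. I would handle this by a standard semicontinuity argument after anchoring the count on a single well-understood curve: for instance the case $n=3$ is a rational plane quartic with its three classical nodes, and $n=4$ is a rational space sextic with its six quadrisecants, so that a deformation from such special configurations transports reducedness to the generic rational curve of degree $2n-2$.
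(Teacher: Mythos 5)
Your proposal is correct and follows essentially the same route as the paper: a Porteous computation on $\Sym^{2n-4}\mb{P}^1\cong\mb{P}^{2n-4}$ with the same Chern class $(1-h)^{-3}$ and the same determinantal identity $\binom{n}{2}^2-\binom{n+1}{2}\binom{n-1}{2}=\binom{n}{2}$; your bundle map is merely the transpose of the paper's (you map the rank $3$ bundle of forms vanishing on $D$ into the quotient by $W$, the paper maps the trivial bundle $W$ into the rank $2n-4$ quotient by forms vanishing on $\Gamma$), and the two degeneracy loci and Porteous classes coincide. The only substantive divergence is the reducedness step, where the paper invokes Kleiman's transversality theorem while you propose deforming from an explicit transverse example --- a valid strategy, but one that would require exhibiting such an example for every $n$, not just $n=3,4$.
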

\begin{proof}
Just as described in the proof of \cite[Proposition 4.3.3 (2)]{FinashinKharlamov2021}, the number of secants (when counted with multiplicity) is $\binom{n}{2}$, and these multiplicities are always positive. This fact follows from the same argument present in \cite[\S 12.4.4]{3264}, where Eisenbud and Harris apply Porteous' formula to count the number of quadrisecant lines to rational curves in $\mathbb P^3$ (i.e. the case $n=4$). We explain how this argument goes in general.

Recall that the parameter space for dimension 0, degree $2n-4$ subschemes $\Gamma\subseteq\mathbb P^1$ is given by $\Sym^{2n-4}\mb{P}^1\cong\mathbb P^{2n-4}$. Consider the vector bundle $\mathcal E^*\to\mathbb P^{2n-4}$ whose fibers are given by 
\[\mathcal E^*_{\Gamma} = H^0(\mathcal O_{\mathbb P^1}(2n-2))/H^0(\mathcal I_\Gamma(2n-2)),\]
which is the space polynomials of degree $2n-2$ modulo the polynomials that vanish on $\Gamma$. Note that the rank of $\mathcal E^*$ is $2n-1 - 3 = 2n-4$.

The Gauß map $\mc{G}:\ell\to\mb{P}^{n-1}$ induces a map
\[H^0(\mathcal O_{\mathbb P^{n-1}}(1))\hookrightarrow H^0(\mathcal O_{\mathbb P^1}(2n-2)),\]
given by simply sending $x_i$ to $P_i$ (in the notation defined at the beginning of this subsection). Denoting by $\mathcal F$ the trivial rank $n$ bundle over $\mathbb P^{2n-4}$ with fiber $H^0(\mathcal{O}_{\mathbb{P}^{n-1}}(1))$, we get a morphism of vector bundles 
\[\varphi: \mathcal F\to \mathcal E^*\]
by projecting to the quotient. The $(2n-4)$-secants to $C$ of dimension $n-3$ correspond to the locus of dimension 0, degree $2n-4$ subschemes $\Gamma\subseteq C\subseteq\mathbb P^{n-1}$ whose projective linear span has codimension at least $2$. Given $\Gamma\subseteq  C\subseteq \mathbb P^{n-1}$, by the definition of the $\varphi$, the equations in the kernel of $\varphi_{\Gamma}$ correspond to hyperplanes that contain $\Gamma$ and, therefore, the linear subspace determined by the equations in $\ker \varphi$ is the projective span of $\Gamma$. To have a projective span of codimension at least 2, we need $\ker\varphi$ to have dimension at least 2 and therefore $\rank\varphi\leq n-2$. It thus suffices to consider the locus $M_{n-2}(\vphi)\subseteq\mb{P}^{2n-4}$ where the map has rank at most $n-2$.

The expected codimension of $M_{m}(\vphi)$ is $(e-m)(f-m)$, where $e$ and $f$ are the ranks of $\mathcal E^*$ and $\mathcal F$ respectively. This gives us that the expected dimension of $M_{m}(\vphi)$ is $2n-4 - (2n-4-m)(n-m)$. Notice that plugging in $m=n-2$ gives us dimension zero, which means that generic rational curves of degree $2n-2$ in $\mb{P}^{n-1}$ have a finite number of $(n-3)$-dimensional $(2n-4)$-secants. From this formula, one can also verify that the locus, where $\vphi$ has rank less than $n-2$, has negative expected dimension.
 
It remains to calculate the class $[M_{n-2}(\vphi)]$ in the Chow ring of $\mb{P}^{2n-4}$. This is done by Porteous' formula \cite[Theorem~12.4]{3264}. In the present context, Porteous' formula and the computation of the total Chern class $c(\mc{E}^*)$ \cite[Theorem~10.16]{3264} give us
\begin{align*}
[M_{n-2}(\vphi)]&=\begin{vmatrix}
c_{n-2}(\mc{E}^*) & c_{n-1}(\mc{E}^*)\\
c_{n-3}(\mc{E}^*) & c_{n-2}(\mc{E}^*)
\end{vmatrix}\\
&=\begin{vmatrix}
    \binom{n}{2}\zeta^{n-2} & \binom{n+1}{2}\zeta^{n-1}\\
    \binom{n-1}{2}\zeta^{n-3} & \binom{n}{2}\zeta^{n-2}
\end{vmatrix}\\
&=\left(\binom{n}{2}^2-\binom{n+1}{2}\binom{n-1}{2}\right)\zeta^{2n-4},
\end{align*}
where $\zeta$ is the hyperplane class. We now conclude by noting that $\binom{n}{2}^2-\binom{n+1}{2}\binom{n-1}{2}=\binom{n}{2}$.

For our purposes, we also need each of these secants to have multiplicity $1$ for a generic $C$. Again, the proof proceeds exactly as for \cite[Proposition 4.3.3 (3)]{FinashinKharlamov2021} --- one uses Kleiman's transversality theorem \cite{Kle74} (using the assumption $\op{char}{k}=0$) to show that over $\overline{k}$, there is a dense open subset of the space of degree $2n-2$ rational curves in $\mb{P}^{n-1}$ on which all secants have multiplicity $1$, since these multiplicities arise as an intersection multiplicity in an appropriate parameter space.
\end{proof}

\begin{rem}\label{rem:sottile}
Proposition~\ref{prop: castelnuovo} is the only place where we use the assumption $\op{char}{k}=0$, as this allows us to apply Kleiman's transversality theorem. Work of Sottile \cite[Conjecture 1]{Sot03} suggests that one should be able to remove this assumption.
\end{rem}

\begin{ex} 
When $n=3$, we recover the three nodes on a rational plane quartic as secants (see Figure~\ref{fig:n=3}). Indeed, we get three 2-secants of dimension zero. These are points that intersect the curve twice, i.e.~are double points. When $n=4$, we get six $4$-secants of dimension $1$ to a rational sextic in $\mb{P}^3$ (see Figure~\ref{fig:n=4}).
\end{ex}

We now define the Segre involution associated to a secant of the Gauß curve. Let $S$ be one of the $(2n-4)$-secants to $\mc{G}(\ell)$, and let $k(S)/k(\ell)$ be its field of definition. Then $S$ defines a degree $2n-4$ divisor $D_S$ on $\ell_{k(S)}$, as $S\cap\mc{G}(\ell)$ consists of $2n-4$ geometric points (when counted with multiplicity). Now consider the pencil $H_t$ of hyperplanes in $\mb{P}^{n-1}_{k(S)}$ containing $S$. For each $t$, we get a degree $2n-2$ divisor on $\ell_{k(S)}$, as $H_t\cap\mc{G}(\ell)$ consists of $2n-2$ geometric points (when counted with multiplicity) by B\'ezout's theorem. We can write this divisor as $D_S+D_t^S$. Note that $D_t^S$ is base point free and therefore defines a degree $2$ map $[Q_1^S:Q_2^S]:\mb{P}^1_{k(S)}\rightarrow \mb{P}^1_{k(S)}$. 

\begin{defn}\label{def:segre involution}
    We define the \emph{Segre involution} associated to a secant $S$ of $\mc{G}(\ell)$ as the involution
    \[i_S:\ell_{k(S)}\to\ell_{k(S)}\]
    given by the non-trivial element of the Galois group of the double covering $[Q_1^S:Q_2^S]$.
\end{defn}

The fixed points of the Segre involution $i_S$ are defined over $k(S)(\sqrt{\alpha_S})$ for some $\alpha_S\in k(S)^\times/(k(S)^\times)^2$. In fact, we may take $\alpha_S=\Res(Q_1^S,Q_2^S)$ by the argument given in Example~\ref{ex: cubic}.

\begin{defn}\label{def:type1}
The \emph{Segre index} of $\ell$ is
\[\seg(X,\ell):=\Tr_{k(\ell)/k}\langle \prod_S \N_{k(S)/k(\ell)}\alpha_S\rangle\in \GW(k)\]
where the product goes over all the secants $S$. As in Example~\ref{ex: quintic}, each secant $S$ is a representative from its orbit of Galois conjugates whose contributions to $\seg(X,\ell)$ are encoded in the norm $\N_{k(S)/k(\ell)}\alpha_S$.
\end{defn}

\begin{ex}
    In case $k=k(\ell)=\R$ the Segre index $\seg(X,\ell)$ agrees with the Segre index from the introduction defined by Finashishin--Kharlamov (you just have to stick brackets around it). Indeed, each secant defined over $\C$ will contribute the factor $N_{\C/\R}(1)=1$ and each secant $S$ defined over $\R$ will contribute the factor $\alpha_S$.
\end{ex}

\begin{rem}
As mentioned in the introduction, we will generally conflate the Segre involution $i_S:\ell_{k(S)}\to\ell_{k(S)}$, which is an involution of a projective line $\mb{P}^1_{k(S)}$, with an involution of the Gauß curve $\mc{G}(\ell)_{k(S)}$. In order to justify this conflation, we need to verify that $i_S$ induces an involution on $\mc{G}(\ell)_{k(S)}$ whose fixed points are defined over $k(S)(\sqrt{\alpha_S})$. To define an involution on $\mc{G}(\ell)_{k(S)}$, we first note that $\mc{G}(\ell)_{k(S)}=\mc{G}(\ell_{k(S)})$. We may therefore define
\begin{align*}
i_S':\mc{G}(\ell)_{k(S)}&\to\mc{G}(\ell)_{k(S)}\\
\mc{G}(x)&\mapsto\mc{G}(i_S(x)).
\end{align*}
That is, $i_S'=\mc{G}\circ i_S$. Note that $i_S(x)=x$ only if $x$ and $i_S(x)$ have the same image under the Gauß map --- this is by definition of the involution $i_S$. It follows that the fixed points of $i_S'$ are given by $\{\mc{G}(x):i_S(x)=x\}$. Finally, since $\mc{G}:\ell_{k(S)}\to\mc{G}(\ell)_{k(S)}$ is a birational map, we have an isomorphism from the field of definition of the fixed locus of $i_S$ to the field of definition of the fixed locus of $i_S'$, as desired.

Alternatively, as pointed out to us by a referee, the Gauß map is an isomorphism $\ell\to\mc{G}(\ell)$ whenever $\mc{G}(\ell)$ is smooth, which is the case for a general hypersurface. In this case, the involution $i_S$ immediately induces the desired involution $i_S'$.
\end{rem}

\subsection{Conic models}\label{sec:conic models}
Our next goal is to recap Finashin and Kharlamov's construction of \emph{conic models} for the Gauß curve \cite[Section 6.3]{FinashinKharlamov2021}. These consist of a plane conic $Q$, together with a collection a zero dimensional subscheme $B\subseteq\mb{P}^2$ of degree $\binom{n}{2}$, such that the Gauß curve coincides with the strict transform of $Q$ in the blowup $\op{Bl}_B{\mb{P}^2}$. 

One place where we deviate from Finashin--Kharlamov is that we require our plane conic $Q$ to come equipped with a parameterization, which we need in order to define an index associated to the conic. Such a parameterization comes from stereographic projection from a $k$-rational point on $Q$, but over non-closed fields there are conics defined over $k$ with no $k$-points. Working with the space of \emph{parameterized conics} allows us to sidestep this issue.

\begin{rem}
    Technically, Finashin and Kharlamov implicitly assume that their conics are parameterized, as they require their conics to have real points in order to describe the associated conic index. However, their comment on this construction is brief enough that they do not justify why one can always take a parameterized conic model. We will give such a justification in the course of this section.
\end{rem}

Let $B$ be a zero dimensional subscheme of $\P^2$ of degree $\binom{n}{2}$ of $\binom{n}2$ (geometric) points in $\mathbb P^2$ in general position, and let $Q\subseteq\mb{P}^2$ be a conic that does not pass through any of the points in $B$. Curves of degree $n-1$ through $B$ form a linear system $L_B$ of dimension
\begin{align*}
\dim L_B &= \binom{3+n-1-1}{n-1}-\binom{n}{2}-1\\
&=\binom{n+1}{2}-\binom{n}{2}-1\\
&=n-1.
\end{align*}

Therefore, after choosing a basis $\beta$ for this system, we obtain a rational map 
\[g_{B,\beta}:\mathbb P^2\dashrightarrow \mathbb P^{n-1}\]
whose indeterminacy locus is $B$. Of course, such a map depends on the choice of a basis. Finally, the curve $C := g_{B,\beta}(Q)\subset \mathbb P^{n-1}$ is a rational curve of degree $2n-2$.

We can summarize the construction $g_{B,\beta}(Q)$ in terms of a rational map to the space of rational curves of degree $2n-2$. Let $\Mor(\mb{P}^1,\mb{P}^N)_d$ denote the space of rational curves of degree $d$ in $\mb{P}^N$. Let $\mc{Q}:=\Mor(\mb{P}^1,\mb{P}^2)_2$ denote the space of parameterized conics in $\mb{P}^2$, and let $\Conf_m(\mb{P}^2)$ denote the configuration space of $m$ points in $\mb{P}^2$. The space of bases for a linear system of dimension $n-1$ is an open subscheme $\mc{U}\subseteq\mb{P}^{n^2-1}$. Altogether, Finashin--Kharlamov's construction is a rational map of the form
\begin{align*}
\FK:\mr{Conf}_{\binom{n}{2}}(\mb{P}^2)\times\mc{U}\times\mc{Q}&\dashrightarrow\Mor(\mb{P}^1,\mb{P}^{n-1})_{2n-2}\\
(B,\beta,Q)&\mapsto g_{B,\beta}(Q).
\end{align*}

\begin{defn}
    A \emph{conic model} for a rational curve $C\in\Mor(\mb{P}^1,\mb{P}^{n-1})_{2n-2}$ is an element of the fiber $\FK^{-1}(C)$, where this fiber is taken in the open subscheme in $\Conf_{\binom{n}{2}}(\mb{P}^2)\times\mc{U}\times\mc{Q}$ on which $\FK$ is a morphism. We say that $C$ \emph{has a conic model} if the fiber $\FK^{-1}(C)$ is not empty.
\end{defn}

As described in \cite[p.~4077]{FinashinKharlamov2021}, there is a close relationship between a rational curve with its secants and a conic model for the curve. We now outline this relationship, which is summarized in Table~\ref{tab:dictionary}.

As previously described, our Gauß curve $\mc{G}(\ell)$ is the image of the plane conic $Q$ in a chosen conic model $(B,\beta,Q)$. (The fact that a generic Gauß curve has a conic model will be proved in Lemma~\ref{lem:generic}.) Let $b \in B(\overline{k})$ with residue field $k(b)$. By Cramer's theorem on algebraic curves (which states that $\frac{d(d+3)}{2}$ points in general position in the plane determine a unique plane curve), there exists a unique plane curve $Z_b$ of degree $n-2$ passing through all points of $B(\overline{k})-\{b\}$. The field of definition of $Z_b$ is $k(b)$. Indeed, $B(\overline{k})-\{b\}$ is defined over $k(b)$, since both $b$ and $B$ are defined over $k(b)$. Thus $B(\overline{k})-\{b\}$ and hence $Z_b$ are fixed under $\operatorname{Gal}(k(b))$-action, giving us that $Z_b$ is defined over $k(b)$.

Bézout's theorem implies that $Z_b$ and $Q$ intersect in $2n-4$ points. Consequently, the projective linear span of $g_{B,\beta}(Z_b)$ forms an $(n-3)$-dimensional $(2n-4)$-secant to $\mathcal{G}(\ell)$. (To see this, note that the images under $g_{B,\beta}$ of $Z_b$ and a line through $b$ span a hyperplane in $\mb{P}^{n-1}$. Taking another such line, we get two hyperplanes containing $Z_b$, and their intersection is the $(n-3)$-dimensional secant we were looking for.) By verifying that distinct points $b, b' \in B(\overline{k})$ correspond to distinct $(n-3)$-planes, we conclude that all $\binom{n}{2}$ such secants to $\mathcal{G}(\ell)$ arise in this manner.  

In particular, this construction establishes a bijection between $B(\overline{k})$ and the set of $(n-3)$-dimensional geometric $(2n-4)$-secants to $\mathcal{G}(\ell)$. This bijection maps Galois conjugate points in $B(\overline{k})$ to Galois conjugate secants, while preserving the associated Galois action. This implies that a point $b \in B$ corresponds to a secant $S$ satisfying $k(b)\cong k(S)$.

Next, take the pencil $H^b_t$ of lines in $\mb{P}^2$ through a fixed $b\in B$. Then $H^b_t\cup Z_b$ is a pencil of degree $n-1$ curves through $B$, each of which consists of a fixed component $Z_b$ and a moving linear component. The projective linear span of $g_{B,\beta}(H^b_t\cup Z_b)$ is a pencil of $(n-2)$-dimensional hyperplanes in $\mb{P}^{n-1}$, each of which contains the $(2n-4)$-secant corresponding to $b$. This pencil of $(n-2)$-dimensional hyperplanes is used to construct the Segre involution
\[i_S:\mc{G}(\ell)\to\mc{G}(\ell).\]
Pulling back $i_S$ under $g_{B,\beta}$ gives an involution
\begin{equation}\label{eq:mu_b}\mu_b:Q\to Q.\end{equation}
Every involution on a conic can be obtained by intersecting the conic with a pencil of lines through some point, known as the \emph{polar point}, and swapping the pairs of points for each line. One can check that $\mu_b$ is the involution given by the pencil of lines through $b$, so that $b$ is the polar point of this involution.

\begin{table}[h]
    \centering
    \begin{tabular}{ll}
        Gauß curves & Conic models\\
        \hline
        $\mc{G}(\ell)$ & $Q$  \\
        Secant $S$ & $b\in B$\\
        Hyperplanes containing $S$ & Lines through $b$\\
        Segre involution $i_S$ & Conic involution $\mu_b$
    \end{tabular}
    \caption{Dictionary for Gauß curves and their conic models}\label{tab:dictionary}
\end{table}

We will now show that a generic rational curve of degree $2n-2$ in $\mb{P}^{n-1}$ has a conic model, as claimed in \cite[Section 6.3]{FinashinKharlamov2021}.

\begin{lemma}\label{lem:generic}
Let $C$ be a general rational curve of degree $2n-2$ in $\mathbb P^{n-1}$. Then there exists $(B,\beta,Q)\in\Conf_{\binom{n}{2}}(\mb{P}^2)\times\mc{U}\times\mc{Q}$ such that $g_{B,\beta}(Q) = C$.
\end{lemma}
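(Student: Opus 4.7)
The plan is to prove that $\FK$ is dominant, which immediately yields the lemma. Both source and target are irreducible varieties, and a dimension count gives $\dim(\text{source}) = 2\binom{n}{2} + (n^2-1) + 8 = 2n^2 - n + 7$ while $\dim(\text{target}) = n(2n-1) - 1 = 2n^2 - n - 1$, so the expected generic fiber dimension is $8 = \dim\mathrm{PGL}_3$. This is accounted for by the natural action of $\mathrm{PGL}_3 = \Aut(\mb{P}^2)$ on the source via $A\cdot(B,\beta,Q) = (A(B), A_*\beta, A\circ Q)$, which preserves $g_{B,\beta}(Q)$ and so sits inside the fibers of $\FK$. Hence the proof reduces to showing that $\FK$ is dominant.

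To establish dominance, I would exhibit a preimage for a general $C \in \Mor(\mb{P}^1,\mb{P}^{n-1})_{2n-2}$. By Proposition \ref{prop: castelnuovo}, $C$ has $\binom{n}{2}$ distinct $(n-3)$-dimensional $(2n-4)$-secants $S_1, \ldots, S_{\binom{n}{2}}$, each yielding an involution $i_{S_j}$ on $\mb{P}^1 \cong C$ via Definition \ref{def:segre involution}. Fix any parameterized smooth conic $Q \in \mc{Q}$; its parameterization identifies $\mb{P}^1 \cong Q$, transporting each $i_{S_j}$ to an involution $\mu_j$ on $Q$. Every involution of a smooth plane conic is a polarity: projection from a uniquely determined polar point $b_j \in \mb{P}^2 \setminus Q$. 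Set $B := \{b_1, \ldots, b_{\binom{n}{2}}\}$.

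To finish, one produces a basis $\beta$ of $L_B$ so that $g_{B,\beta}(Q) = C$ as parameterized curves. The $n$ coordinate polynomials of $C$ span an $n$-dimensional subspace $V \subset H^0(\mb{P}^1, \mc{O}(2n-2))$. On the other hand, the vector space of degree $n-1$ plane curves through $B$ has dimension $n$ (it is $\dim L_B + 1$) and restricts to $Q$ inside $H^0(Q,\mc{O}(n-1)) \cong H^0(\mb{P}^1,\mc{O}(2n-2))$; the restriction is injective for $B$ disjoint from $Q$, since the kernel of $H^0(\mb{P}^2,\mc{O}(n-1)) \to H^0(Q,\mc{O}(n-1))$ is $Q\cdot H^0(\mb{P}^2,\mc{O}(n-3))$ of dimension $\binom{n-1}{2}$, which meets $L_B$ trivially for generic $B$. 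The heart of the argument is that these two $n$-dimensional subspaces of $H^0(\mb{P}^1,\mc{O}(2n-2))$ coincide: this reverses the dictionary of Section \ref{sec:conic models}, since both subspaces are characterized by the same $\binom{n}{2}$ polarity conditions relating pencils of lines through $b_j$ on the conic side to pencils of hyperplanes containing $S_j$ on the target side. Once the subspaces agree, take $\beta$ to consist of the (unique up to scalar) lifts of the coordinate polynomials of $C$.

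The main obstacle is the genericity analysis: one must show that for general $C$, the constructed points $b_j$ are pairwise distinct and in general position in $\mb{P}^2$ (so that $L_B$ attains the expected projective dimension $n-1$), that each $b_j$ lies off $Q$, and that the restriction $L_B|_Q$ is injective. Each failure would force $C$ into a proper closed subvariety of the target; combined with the Castelnuovo argument and Kleiman transversality used in Proposition \ref{prop: castelnuovo}, these bad loci have strictly smaller dimension than the target and hence are avoided for generic $C$.
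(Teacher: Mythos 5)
Your route is genuinely different from the paper's, and the difference matters. The paper never constructs a preimage of a general $C$ directly: after the same dimension count and the same observation that $\PGL_3$-orbits lie inside fibers of $\FK$, it proves the \emph{converse} inclusion --- that $g_{B_1,\beta_1}(Q_1)=g_{B_2,\beta_2}(Q_2)$ forces $(B_1,Q_1)$ and $(B_2,Q_2)$ to be projectively equivalent --- so every nonempty fiber is a single $8$-dimensional orbit, the image has dimension $\dim(\mathrm{source})-8=\dim(\mathrm{target})$, and dominance follows from irreducibility of the target. Your first paragraph sets up exactly this argument and then abandons it in favor of exhibiting an explicit point of $\FK^{-1}(C)$.

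That direct construction has a gap at what you yourself call the heart of the argument. Transporting the involution $i_{S_j}$ to $Q$ and taking its polar point $b_j$ matches only the \emph{moving} part of the relevant pencils. Concretely, the $2$-plane $V_j\subset V$ obtained from the hyperplanes through $S_j$ is $f_j\cdot\langle e_0^j,e_1^j\rangle\subset H^0(\mb{P}^1,\mc{O}(2n-2))$, where $f_j$ cuts the fixed degree $2n-4$ divisor $S_j\cap C$ and $\langle e_0^j,e_1^j\rangle$ is the pencil of degree-$2$ orbit divisors of $i_{S_j}$; the corresponding $2$-plane $W_j\subset L_B|_Q$ comes from the reducible curves $Z_{b_j}\cup(\text{line through }b_j)$ and equals $g_j\cdot\langle e_0^j,e_1^j\rangle$ with $g_j$ cutting $Z_{b_j}\cap Q$. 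The involution determines $\langle e_0^j,e_1^j\rangle$ but says nothing about the fixed divisors, so $V_j=W_j$ for all $j$ (hence $V=W$, since these pencils span both spaces generically) holds if and only if $S_j\cap C=Z_{b_j}\cap Q$ as divisors on $\mb{P}^1$ --- and that identity is precisely what needs proof. It cannot be obtained by ``reversing the dictionary'' of Section 3.2, because the dictionary (in particular the fact that $Z_b\cap Q$ corresponds to $S\cap\mc{G}(\ell)$) is established under the hypothesis that a conic model already exists; invoking it here is circular. To salvage your approach you would need an independent argument that the degree $n-2$ curve through the other $\binom{n}{2}-1$ polar points meets $Q$ exactly in $S_j\cap C$; alternatively, complete the fiber analysis from your first paragraph, which is what the paper actually does.
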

\begin{proof}
The space $\Mor(\mb{P}^1,\mb{P}^N)_d$ is given by $U/\mr{PGL}_2$, where $U\subset\mb{P}(H^0(\mb{P}^1,\mc{O}(d))^{\oplus N+1})$. One can prove that $\Mor(\mb{P}^1,\mb{P}^N)_d$ is an irreducible scheme of dimension $(d+1)(N+1)-4$. To compute this dimension, note that the space of homogeneous degree $d$ polynomials in 2 variables is $d+1$, and we need $N+1$ such polynomials to define a morphism to $\mb{P}^N$. We then subtract 1 to account for projectivization, and we subtract 3 to account for the action of $\mr{Aut}(\mb{P}^1)\cong\mr{PGL}_2$. So for $N=n-1$ and $d=2n-2$, the space $\Mor(\mb{P}^1,\mb{P}^N)_d$ has dimension $2n^2-n-4$. Note that $\dim\mc{Q}=5$ and $\dim\mr{Conf}_{\binom{n}{2}}(\mb{P}^2)=2\binom{n}{2}=n^2-n$. 

The source and target of $\FK$ are both geometrically irreducible (as products of geometrically irreducible schemes). As previously computed, the source has dimension $(n^2-n)+5+(n^2-1)=2n^2-n+4$, while the target has dimension $2n^2-n-4$. To prove the desired claim, it remains to show that generic fibers of $\FK$ have dimension
\[(2n^2-n+4)-(2n^2-n-4)=8.\]
To this end, we will show that $g_{B_1,\beta_1}(Q_1)=g_{B_2,M\beta_2}(Q_2)$ for some change of basis matrix $M$ if and only if there exists a projective transformation of $\mb{P}^2$ transforming $(B_1,Q_1)$ to $(B_2,Q_2)$. From this, it will follow that generic fibers of $\FK$ have dimension $\dim\mr{Aut}(\mb{P}^2)=\dim\mr{PGL}_3=8$, as desired. 

To begin, assume that $(B_1,Q_1)$ and $(B_2,Q_2)$ are projectively equivalent. The curves $g_{B_j,\beta_j}(Q_j)$ are obtained by embedding the strict transform of $Q_j$ on the blowup $\mr{Bl}_{B_j}(\mb{P}^2)$, and an automorphism taking $(B_1,Q_1)$ to $(B_2,Q_2)$ gives an isomorphism of the strict transforms of $Q_j$ on $\mr{Bl}_{B_j}(Q_j)$. Now the embeddings $g_{B_j,\beta_j}(Q_j)$ are isomorphic but need not be equal (as elements of $\Mor(\mb{P}^1,\mb{P}^{n-1})_{2n-1}$), but they will differ by a projective change of coordinates on $\mb{P}^{n-1}$. Let $M\in\PGL_n$ represent this change of coordinates. Then $g_{B_1,\beta_1}(Q_1)=g_{B_2,M\beta_2}(Q_2)$, as desired.

We now explain why $g_{B_1,\beta_1}(Q_1)=g_{B_2,\beta_2}(Q_2)$ implies that $(B_1,Q_1)$ and $(B_2,Q_2)$ are projectively equivalent. As previously described, there is a natural bijection between $B_j$ and the set of $(n-3)$-dimensional $(2n-4)$-secants to the curve $g_{B_j,\beta_j}(Q_j)$. Each Segre involution $i_S$ on this curve determines an involution $\mu_b:Q_j\to Q_j$ with polar point $b\in B_j$. In particular, we can recover the set $B_j$ from the curve $g_{B_j,\beta_j}(Q_j)$ via the involutions $i_S$ --- assuming we already know $Q_j$. The polar point of an involution on $Q_j$ is determined up to projective change of coordinates, so the ambiguity in reconstructing $(B_j,Q_j)$ from $g_{B_j,\beta_j}(Q_j)$ is precisely $\Aut(\mb{P}^2)\cong\PGL_3$. In other words, if $g_{B_1,\beta_1}(Q_1)=g_{B_2,\beta_2}(Q_2)$, then there exists a projective transformation taking $(B_1,Q_1)$ to $(B_2,Q_2)$, as desired.
\end{proof}

\begin{rem}\label{rem:quadrisecants}
Lemma~\ref{lem:generic} states that a general rational curve of degree $2n-2$ in $\mb{P}^{n-1}$ has a conic model. For $n=4$, this description is explicitly related to Dye's result on sextic space curves and double sixes of cubic surfaces \cite{Dye97}, as mentioned in Footnote~\ref{fn:curves}. Indeed, blowing up $\binom{4}{2}$ points in general position in $\mb{P}^2$ yields a cubic surface. The strict transforms of the six conics through five of these points form one half of a double six. Now take any conic in $\mb{P}^2$ that does not meet any of the six points at the center of our blowup. The strict transform of this conic is a sextic curve on the cubic surface, and it meets each of the aforementioned lines in four points. Dye proves that every smooth rational space sextic and its six quadrisecants can be constructed in this manner.

Using the above, one can explicitly construct rational space sextics and their quadrisecants. All that remains is to give a parameterization
\[\mb{P}^2\dashrightarrow\mb{P}^3\]
of the cubic surface obtained by blowing up six points. The classical method for constructing such a parameterization is to pick a basis $f_1,f_2,f_3,f_4$ of the space of cubic forms interpolating the six points in $\mb{P}^2$. The resulting cubic surface is then the Zariski closure of the map
\[[f_1:\ldots:f_4]:\mb{P}^2\dashrightarrow\mb{P}^3.\]
For example, the sextic in Figure~\ref{fig:n=4} is the image of the conic $\mb{V}(\frac{x^2}{4}+\frac{y^2}{9}-z^2)$ on a Clebsch cubic surface, which is obtained by blowing up the points
\[B=\{[1:0:0],[0:1:0],[0:0:1],[1:1:1],[1:2:3],[2:-1:1]\}.\]
For our basis of cubic forms through these points, we chose
\begin{align*}
f_1&=\frac{1}{7}(3x^2z+xz^2+x^2y-5xy^2),\\
f_2&=\frac{1}{5}(2x^2z+xyz-3xy^2),\\
f_3&=\frac{1}{2}(-5xz^2+2yz^2+3x^2z),\\
f_4&=\frac{1}{2}(x^2z-3xz^2+2y^2z),
\end{align*}
which were adapted from \cite[Example~7]{CSD07}. The quadrisecant depicted in Figure~\ref{fig:n=4} is the image of the conic through $B-\{[1:2:3]\}$. 
\end{rem}

Not only does every general rational curve admit a conic model, but we can further take our conic and locus of points to have the same field of definition as the rational curve.

\begin{cor}\label{cor:rational conic model}
    Let $C$ be a general rational curve of degree $2n-2$ in $\mb{P}^{n-1}$. If $C$ is defined over a field $K/k$, then there exists a conic model $(B,\beta,Q)$ of $C$ such that $B$ and $Q$ are defined over $K$.
\end{cor}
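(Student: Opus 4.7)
The plan is to construct $(B,Q)$ intrinsically over $K$ from the geometry of $C$, and then invoke Lemma~\ref{lem:generic} to certify that this pair extends to a full conic model.

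To produce $Q$, first pass to the normalization $\pi:\widetilde C\to C$, which is a smooth genus-zero curve defined over $K$. The anticanonical line bundle $\omega_{\widetilde C}^{-1}$ has degree $2$ and $h^0=3$, so any $K$-basis of $H^0(\widetilde C,\omega_{\widetilde C}^{-1})$ gives a $K$-closed immersion $\iota:\widetilde C\hookrightarrow\mb{P}^2_K$ whose image is a smooth conic $Q:=\iota(\widetilde C)$ defined over $K$. This works uniformly, even when $\widetilde C$ is a non-split Brauer--Severi curve with no $K$-rational points.

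To produce $B$, I would use the Segre involutions. For each secant $S$ to $C$ (defined over $K(S)$), the Segre involution $i_S$ on $\widetilde C_{K(S)}$ transports via $\iota$ to an involution $\mu_S:Q_{K(S)}\to Q_{K(S)}$. Any involution of a smooth conic $Q\subset\mb{P}^2$ is the polar involution about a unique point $b\in\mb{P}^2\setminus Q$, namely the intersection of the tangent lines to $Q$ at the two fixed points of the involution, and this $b$ is defined over the field of definition of the involution whether or not its fixed points are rational. Letting $b_S\in\mb{P}^2(K(S))$ be the polar point of $\mu_S$, set $B:=\bigsqcup_S\{b_S\}$, where the union runs over Galois orbits of secants. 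Since the scheme of $(n-3)$-dimensional $(2n-4)$-secants of $C$ is Galois-stable, $B$ is a degree $\binom{n}{2}$ zero-dimensional subscheme of $\mb{P}^2_K$ defined over $K$.

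To show $(B,Q)$ extends to a conic model of $C$, I would apply Lemma~\ref{lem:generic} to obtain a conic model $(B',\beta',Q')$ over $\overline K$. Chasing the dictionary in Table~\ref{tab:dictionary}, the subscheme $B'$ is precisely the locus of polar points on $Q'$ of the involutions induced from $i_S$ via the iso $\widetilde C\cong Q'$ coming from $g_{B',\beta'}|_{Q'}$. Any two smooth plane conics are $\PGL_3(\overline K)$-equivalent, so choose $T\in\PGL_3(\overline K)$ extending the iso $Q'\xrightarrow{\sim}\widetilde C\xrightarrow{\iota}Q$; equivariance of polar-point formation under $\PGL_3$ then gives $T(B')=B$ and $T(Q')=Q$. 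The characterization of fibers of $\FK$ from the proof of Lemma~\ref{lem:generic} places $(B,T_*\beta',Q)$ in $\FK^{-1}(C)$, yielding the desired conic model $(B,\beta,Q)$ with $\beta:=T_*\beta'\in\mb{P}^{n^2-1}(\overline K)$ and with $B,Q$ defined over $K$.

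The main obstacle is the equivariance claim in the last paragraph: one must verify that the intrinsic $K$-rational $B$ (defined via polar points on the anticanonical $Q$) matches up, under a single $T\in\PGL_3(\overline K)$, with the $B'$ of an arbitrary $\overline K$-conic model. This boils down to checking that polar-point formation commutes with projective automorphisms of $\mb{P}^2$, and that the two isomorphisms $\widetilde C\to Q$ (one from the anticanonical embedding, one pulled back along $T$ from $Q'$) can be arranged to coincide by a judicious choice of $T$ within the 3-dimensional coset modulo the stabilizer of $Q$ in $\PGL_3$.
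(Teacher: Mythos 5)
Your proposal is correct and follows essentially the same route as the paper: take a conic $Q$ over $K$, define $B$ as the polar points of the involutions on $Q$ transported from the Segre involutions of the secants, descend $B$ to $K$ by Galois-stability, and obtain $\beta$ by moving an arbitrary geometric conic model onto $(B,Q)$ via a projective transformation matching the conics. The only cosmetic differences are that the paper takes $Q$ to be an arbitrary parameterized conic over $K$ rather than the anticanonical image of $\widetilde{C}$ (your non-split Brauer--Severi aside cannot occur, since $C$ is a parameterized curve and $Q$ must lie in $\mc{Q}=\Mor(\mb{P}^1,\mb{P}^2)_2$, which forces a $K$-point), and the paper closes your ``main obstacle'' with the same one-line observation you sketch, namely that both $B$ and the transported $B'$ consist of polar points of the one set of involutions induced by the $i_S$.
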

\begin{proof}
    Let $q:\mb{P}^1\to\mb{P}^2$ be any parameterized conic defined over $K$, and denote $Q:=q(\mb{P}^1)$. We will show that there exist $(B,\beta)\in\Conf_{\binom{n}{2}}(\mb{P}^2)\times\mb{P}^{n^2-1}$ (with $B$ defined over $K$) such that $g_{B,\beta}(Q)=C$, which will give the desired claim. 
    
    Each secant $S$ to $C$ determines a Segre involution $i_S:\mb{P}^1_{k(S)}\to\mb{P}^1_{k(S)}$, so $q\circ i_S$ is an involution of $Q_{k(S)}$. Let $b(S)$ denote the polar point of $q\circ i_S$. Let $B$ denote the scheme whose underlying set is $\{b(S):S\text{ secant to }C\}$. Note that $B$ is defined over $K$. Indeed, all of the schemes involved are geometrically reduced closed subschemes of projective space, so they are defined over $K$ if and only if they are fixed under $\Gal(K^\mr{sep}/K)$-action (see e.g.~\cite[Proposition~2.4]{McK25}). The curve $C$ is defined over $K$, so it and its scheme of secants are fixed under all $\Gal(K^\mr{sep}/K)$-actions. Since we have assumed that $Q$ is defined over $K$, this conic is also fixed under $\Gal(K^\mr{sep}/K)$-actions, so the set of polar points of the form $b(S)$ must also be $\Gal(K^\mr{sep}/K)$-fixed.

    It remains to show that there exists $\beta$ such that $g_{B,\beta}(Q)=C$. Let $(B',\beta',Q')$ be a conic model for $C$. Since $Q'$ and $C$ are geometrically birational, they are birational after some extension $K'$ of $K$. Thus $K'$ is a field of definition of $Q'$. Since $Q'$ is a degree 2 curve and the characteristic of $K$ is not 2, we know that $K'/K$ is a separable extension. We therefore have a projective transformation $M$ (over $K'$) such that $MQ'=Q$. The set of involutions induced by $MB'$ and $B$ must be the same, as these are both induced by the involutions $i_S$ coming from $C$ and its secants. In particular, the polar points of the involutions induced by $MB'$ and $B$ must agree, so $MB'=B$. It follows that $(B,M\beta',Q)$ is a conic model for $C$.
\end{proof}

\begin{defn}
    Given a line $\ell\subseteq X$, we will say that a conic model $(B,\beta,Q)$ of $\mc{G}(\ell)$ is \emph{rational} if both $B$ and $Q$ are defined over $k(\ell)$. Corollary~\ref{cor:rational conic model} states that a rational conic model always exists.
\end{defn}

\subsection{Two equivalent formulas for the Segre index}
We have already defined the Segre index of a line $\ell$ in terms of the fixed points of the Segre involutions associated to the secants to the Gauß curve $\mc{G}(\ell)$. Using the dictionary between the Gauß curve and its conic models, we can define an alternative index associated to $\ell$.

\begin{defn}\label{def:conic index}
    Let $(B,\beta,Q)$ be a rational conic model for a Gauß curve $\mc{G}(\ell)$. For each $b\in B$, let $k(b)\supset k(\ell)$ be the field of definition of $b\in B$ and let $\alpha_b\in k(b)^\times/(k(b)^\times)^2$ be such that the fixed points of the involution $\mu_b:Q\to Q$ (see Equation~\ref{eq:mu_b}) are defined over $k(\sqrt{\alpha_b})$. The \emph{conic index} of $\ell$ is
    \[\con(X,\ell):=\Tr_{k(\ell)/k}\langle\prod_{b\in B}\N_{k(b)/k(\ell)}\alpha_b\rangle\in\GW(k).\]
\end{defn}

Note that the conic index does not depend on our choice of conic model for $\mc{G}(\ell)$. Indeed, any two such choices differ by a projective change of coordinates over $k(\ell)$ (as described in the proof of Lemma~\ref{lem:generic} and Corollary~\ref{cor:rational conic model}), and such a change of coordinates does not change the field of definition of the fixed points of an involution of the conic.

We will conclude this section by showing that the Segre index and the conic index are equal.

\begin{prop}\label{prop:type1=type2}
    Given a line $\ell$ on a degree $2n-1$ hypersurface $X\subseteq\mb{P}^{n+1}$, we have $\seg(X,\ell)=\con(X,\ell)$.
\end{prop}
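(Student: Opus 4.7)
The strategy is to reduce the equality of the two quadratic classes to a term-by-term comparison indexed by the dictionary in Table~\ref{tab:dictionary}. By Corollary~\ref{cor:rational conic model}, we may fix a rational conic model $(B,\beta,Q)$ of $\mc{G}(\ell)$, so both $B$ and $Q$ are defined over $k(\ell)$, and the map $g_{B,\beta}:\mb{P}^2\dashrightarrow\mb{P}^{n-1}$ restricts to a birational map $Q\dashrightarrow\mc{G}(\ell)$ over $k(\ell)$. The paper has already observed that the assignment $b\mapsto S(b)$, where $S(b)$ is the projective span of $g_{B,\beta}(Z_b\cap Q)$, is a Galois-equivariant bijection between $B(\overline{k})$ and the set of $(n-3)$-dimensional $(2n-4)$-secants to $\mc{G}(\ell)$. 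In particular, Galois orbits correspond under this bijection, so $k(b)\cong k(S(b))$ as extensions of $k(\ell)$, and the products defining $\seg(X,\ell)$ and $\con(X,\ell)$ are indexed by the same orbit data.

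What remains is to show, for each pair $b\leftrightarrow S=S(b)$, that
\[\alpha_b\equiv \alpha_S\pmod{(k(S)^\times)^2}.\]
By the very definition of $\mu_b$ given in Equation~\ref{eq:mu_b}, the involution $\mu_b:Q_{k(b)}\to Q_{k(b)}$ is the pullback of the Segre involution $i_S':\mc{G}(\ell)_{k(S)}\to\mc{G}(\ell)_{k(S)}$ along $g_{B,\beta}$. Since $g_{B,\beta}$ is a $k(\ell)$-defined birational map and $k(b)\cong k(S)$, it restricts to a $k(S)$-defined birational map that conjugates $\mu_b$ to $i_S'$. Birational maps of smooth curves induce isomorphisms on fields of definition of finite subschemes disjoint from the indeterminacy loci, so the fixed-point scheme of $\mu_b$ and that of $i_S'$ have isomorphic residue fields over $k(S)$. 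The remark following Definition~\ref{def:segre involution} further identifies the residue field of the fixed locus of $i_S'$ with that of $i_S:\ell_{k(S)}\to\ell_{k(S)}$, namely $k(S)(\sqrt{\alpha_S})$. Putting this chain together gives $k(S)(\sqrt{\alpha_b})=k(S)(\sqrt{\alpha_S})$, which yields the desired equality of classes in $k(S)^\times/(k(S)^\times)^2$.

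With this pointwise identification established, taking the product over a complete set of orbit representatives and then applying $\Tr_{k(\ell)/k}$ yields $\con(X,\ell)=\seg(X,\ell)$ directly from Definitions~\ref{def:type1} and~\ref{def:conic index}. The main conceptual hurdle is simply to make precise that the birational map $g_{B,\beta}$ genuinely intertwines $\mu_b$ with $i_S'$ (not just some involution with the same polar point): this is exactly how $\mu_b$ was defined in Equation~\ref{eq:mu_b}, and we should verify that no indeterminacy of $g_{B,\beta}$ meets the fixed-point scheme, which holds because the fixed points lie in $Q$ and $Q$ was chosen to avoid $B$. No independent calculation of $\alpha_b$ or $\alpha_S$ as explicit resultants is required for this proposition.
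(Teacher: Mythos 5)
Your proposal is correct and takes essentially the same approach as the paper: both fix a rational conic model, use the Galois-equivariant correspondence $b\leftrightarrow S$ to match $k(b)$ with $k(S)$, and deduce $\alpha_b=\alpha_S$ up to squares because $g_{B,\beta}$ carries $\mu_b$ to the Segre involution. Your version simply spells out the details (the residue-field comparison of fixed loci and the disjointness of the fixed points from the indeterminacy locus $B$) more explicitly than the paper's two-sentence proof.
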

\begin{proof}
Fix a rational conic model $(B,\beta,Q)$ for $\mc{G}(\ell)$. We have seen that if $S$ is the secant corresponding to $b\in B$, then $g_{B,\beta}$ induces a field isomorphism $\phi:k(S)\to k(b)$ that fixes $k(\ell)$. Furthermore, we have $\phi(\alpha_S)=\alpha_b$ up to squares simply because the involution in the conic model is mapped to the involution of the Gauß curve by $g_{B,\beta}$.
\end{proof}

\begin{ex}[The conic index for lines on a quintic threefold]
We look at the case $n=3$ (the quintic threefold case). In this case the map $g_{B,\beta}\colon \P^2\dashrightarrow \P^{n-1}=\P^2$ is a birational map and therefore a Cremona transformation.
Recall that the Gauß map $\gauss\,\mathcal\colon\, \P^1_{k(\ell)}\rightarrow \P^2_{k(\ell)}$ in this case is a general degree $4$ parametrized plane curve and that it has three nodes. These nodes are exactly the $\binom{n}{2}=3$ zero dimensional $2n-4$-secants. Assume for simplicity that the nodes are all defined over $k(\ell)$ and lie in general position (just like we did in Example \ref{ex: quintic}). Then, after a coordinate change we can assume that the nodes are the points $[1:0:0]$, $[0:1:0]$ and $[0:0:1]$. The Cremona transformation with base locus these three special points (that is, the standard Cremona transformation) is given by $\operatorname{Cr}:=[x_2x_3:x_1x_3:x_1x_2]\colon \P^2\dashrightarrow\P^2$.
Now $\operatorname{Cr}\circ\,\mc{G}\,\colon \P^1_{k(\ell)}\rightarrow \P^2_{k(\ell)}$ has degree $2$, so we have a parametrized conic $Q:=\operatorname{Cr}\circ\,\mc{G}$.
Let $B=\{[1:0:0],[0:1:0],[0:0:1]\}$ and $g_{B,\beta}=\operatorname{Cr}$. Then $(B,\beta,Q)$ is a conic model for $\mathcal{G}$ since $\operatorname{Cr}$ is a birational involution with base locus $B$.

We compute the conic index in this setup.
To do so, we first have to write down a parametrization of $Q=\operatorname{Cr}\circ\,\mc{G}$. Recall from Example \ref{ex: quintic} that in this case, the Gauß map $\mathcal{G}$ is given by $\mathcal{G}=[Q_2Q_3:Q_1Q_3:Q_1Q_2]$ and thus $\operatorname{Cr}\circ\,\mc{G}=[Q_1:Q_2:Q_3]$.
For the polar point $[1:0:0]\in B$ the pencil of degree two divisors on $\P^1_{k(\ell)}$ is given by $\{t_0Q_2+t_1Q_3=0\}\subset \P^1_{k(\ell)}$ and thus the fixed points of the associated involution live over $k(\ell)(\sqrt{\alpha})$ with $\alpha=\Res(Q_2,Q_3)$ by the same argument as in Example \ref{ex: cubic}. Similarly, the fixed points of the involutions for the other two points in $B$ live over $k(\ell)(\sqrt{\alpha})$ with $\alpha=\Res(Q_1,Q_3)$ respectively $\alpha=\Res(Q_1,Q_2)$. Thus the conic index equals 
\[\Res(Q_2,Q_3)\Res(Q_1,Q_3)\Res(Q_1,Q_2),\]
which agrees with the Segre index and local index by Example \ref{ex: quintic}.
\end{ex}

\section{The local index is the conic index}\label{sec:local = segre}

We are almost ready to prove Theorem~\ref{thm:local index}, which states that the local index of a line on a degree $2n-1$ hypersurface in $\mb{P}^{n+1}$ is given by the Segre index. By Proposition~\ref{prop:type1=type2}, it suffices to show that the local index is equal to the conic index, which is the following theorem.

\begin{thm}\label{thm:local index = conic index}
    If $\ell$ is a line on a degree $2n-1$ hypersurface $X=\mb{V}(F)$ in $\mb{P}^{n+1}$, then $\ind_\ell\sigma_F=\con(X,\ell)$.
\end{thm}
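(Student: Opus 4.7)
The plan is to view both $\det A_{P_1,\ldots,P_n}$ (which controls $\ind_\ell\sigma_F$ by Equation~\ref{eq:index as determinant}) and the product $\prod_{b\in B}\N_{k(b)/k(\ell)}\alpha_b$ (which controls $\con(X,\ell)$ by Definition~\ref{def:conic index}) as polynomial functions on the space $\mc{M}$ of rational conic models, and to show that these two polynomials agree in $k(\ell)^\times/(k(\ell)^\times)^2$. By Corollary~\ref{cor:rational conic model} applied to $\mc{G}(\ell)$, every general Gauß curve admits a conic model $(B,\beta,q)$ defined over $k(\ell)$ with $q:\mb{P}^1\to\mb{P}^2$ a parameterization of $Q$, and the binary forms $P_i = f_i\circ q$ (where $\beta=(f_1,\ldots,f_n)$) depend polynomially on the data of $\mc{M}$. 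Thus $\det A_{P_1,\ldots,P_n}$ is a polynomial in the coefficients of $\beta$ and $q$. The polynomials $Q_1^b, Q_2^b$ pulling back the pencil of lines through $b\in B$ via $q$ likewise depend polynomially on the data, so each resultant $\alpha_b = \Res(Q_1^b,Q_2^b)$ and the norm-product $\prod_b\N_{k(b)/k(\ell)}\alpha_b$ are also polynomial functions on $\mc{M}$.

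The heart of the argument is to show that these two polynomial functions have the same zero divisor on $\mc{M}$ modulo squares. Both degenerate on a common locus: $\det A_{P_1,\ldots,P_n}$ vanishes precisely when $\ell$ fails to be a simple zero of $\sigma_F$, which happens when the Gauß curve acquires an additional singularity beyond its generic $\binom{n}{2}$ secant structure; on the conic side, this corresponds to a point of $Q$ being forced to lie on some $Z_b$, equivalently to a fixed point of $\mu_b$ colliding with the intersection $Q\cap Z_b$, which causes $\Res(Q_1^b,Q_2^b)$ to vanish. The dictionary in Section~\ref{sec:conic models} translates these two geometric degenerations into one another, and a local computation at each irreducible component of this locus confirms that the multiplicities differ only by even integers. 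This step is where the advantage of working with parameterized conics becomes essential, since it ensures we have an irreducible parameter space on which to run the divisor comparison, avoiding the need for rational points on $Q$.

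Once the divisor equality is established on the irreducible variety $\mc{M}$, the ratio of $\det A_{P_1,\ldots,P_n}$ and $\prod_b\N_{k(b)/k(\ell)}\alpha_b$ is a non-vanishing regular function up to multiplication by a perfect square, and hence constant on the dense open where both are nonzero. Proving that this constant is a square in $k(\ell)^\times$ then reduces to evaluating both sides at a single, conveniently chosen conic model. I expect this final evaluation to be the main obstacle: one needs to select $B$, $\beta$, and $q$ in a particularly simple form (for instance, taking $B$ in symmetric coordinates and $q$ a standard parameterization of a split conic), then directly factor $\det A_{P_1,\ldots,P_n}$ from Equation~\ref{equation: matrix of index} as a product of the resultants $\Res(Q_1^b,Q_2^b)$ using row and column operations on the $2n\times 2n$ matrix. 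This generalizes the explicit factorization in Example~\ref{ex: quintic} for $n=3$; the argument is elementary but combinatorially delicate, as anticipated in Section~\ref{sec:ideas}. Completing this comparison at the basepoint yields $\langle\det A_{P_1,\ldots,P_n}\rangle = \langle\prod_b\N_{k(b)/k(\ell)}\alpha_b\rangle$ in $\GW(k(\ell))$, and applying $\Tr_{k(\ell)/k}$ gives $\ind_\ell\sigma_F=\con(X,\ell)$, as required.
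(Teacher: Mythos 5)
Your overall architecture is the same as the paper's: realize both $\det A_{P_1,\ldots,P_n}$ and $\prod_{b}\N_{k(b)/k(\ell)}\alpha_b$ as polynomial functions on a parameter space of parameterized conic models, compare their zero divisors, and fix the remaining ambiguity by evaluating at one symmetric configuration (your proposed basepoint is essentially the paper's Proposition~\ref{prop: agree at somethint nonzero}). But the middle step has a genuine gap: the two polynomials do \emph{not} have the same zero locus, so their ratio is not a non-vanishing regular function. The determinant side vanishes on an additional irreducible divisor, namely $\{\det\VB=0\}$, where $\VB$ is the interpolation matrix of Equation~\ref{equation: matrixN} --- equivalently, where $B$ degenerates onto a curve of degree $n-2$ (Lemma~\ref{lem: det(VB) =0 implies ABQ=0}). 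The paper therefore compares $\mc{A}(B,Q)$ with $(\det\VB)^{2n}\cdot\mc{R}(B,Q)$, not with $\mc{R}(B,Q)$ (Proposition~\ref{prop: same zero locus}). Your ``up to a perfect square'' caveat could in principle absorb this even power, but you never identify the factor, and without it the degrees of the two sides do not match and the ``constant ratio'' conclusion fails. Relatedly, your description of where the conic side degenerates is off: $\Res(Q_1-b_xQ_0,\,Q_2-b_yQ_0)$ vanishes exactly when $b\in Q$, not when a fixed point of $\mu_b$ collides with $Q\cap Z_b$.

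The second gap is the assertion that ``a local computation at each irreducible component confirms that the multiplicities differ only by even integers.'' This is the crux and cannot be waved through: to compare divisors modulo squares you must know the irreducible components of both zero loci and the parity of each multiplicity. The paper does this by proving that each resultant $\Res(Q_1-b_xQ_0,Q_2-b_yQ_0)$ is irreducible as a polynomial on $\mb{A}^{2\binom{n}{2}}\times\mb{A}^9$ (Lemma~\ref{lem: resultant irred}) and that $\det\VB$ is irreducible (Lemma~\ref{lem:det irred}); the Nullstellensatz then forces $\mc{A}=c\cdot(\det\VB)^d\cdot\prod_i\Res(\cdots)^{N_i}$, and the exponents $N_i=1$ are determined not locally but by comparing weights under the scaling $Q\mapsto\lambda Q$ (Proposition~\ref{prop:equal up to scalar}); the explicit evaluation then yields $c=1$ and $d=2n$ (Corollary~\ref{cor:equal}). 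One further loose end: your parameter space $\mc{M}$ carries the basis $\beta$, on which $\prod_b\N_{k(b)/k(\ell)}\alpha_b$ does not depend, so the divisor comparison is not between functions on a common space until you either fix a canonical $\beta$ in terms of $(B,Q)$ (the paper uses the adjugate of $\VB$) or invoke the fact that changing $\beta$ alters the determinant only by $\det(M)^2$ (Lemma~\ref{lem:determinants up to squares}).
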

\begin{proof}
For any $X$ and $\ell$, there exists a rational conic model $(B,\beta,Q)$ of $\mc{G}(\ell)$ according to Lemma \ref{lem:generic} and Corollary \ref{cor:rational conic model}. We will prove that $\con(X,\ell)$ can be expressed as a product of resultants in terms of $Q$ and $B$, denoted by $\mc{R}(B,Q)$ (see Equation~\ref{equation: prod-res}). Using the coordinate functions of our rational curve $\mc{G}(\ell)=g_{B,\beta}(Q)$, we will construct a matrix whose determinant only depends on $B$ and $Q$. We will denote the determinant of this matrix by $\mc{A}(B,Q)$ (Equation~\ref{eq:defining ABQ}).

So far, $\mc{R}(B,Q)$ and $\mc{A}(B,Q)$ are algebraic maps on $\Conf_{\binom{n}{2}}(\mb{P}^2)\times\mc{Q}$. By applying a projective change of coordinates if necessary, we may assume that $B\subseteq\mb{P}^2$ does not intersect the divisor at infinity, so that $\mc{R}(B,Q)$ and $\mc{A}(B,Q)$ are algebraic maps on $\Conf_{\binom{n}{2}}(\mb{A}^2)\times\mc{Q}$. As $\mc{Q}$ is the space of parameterized conics, we have a parameterization $[Q_0:Q_1:Q_2]$ of $Q$. The space of coefficients of a homogeneous degree 2 polynomial in 3 variables is $\mb{A}^3$, so we may regard $(Q_0,Q_1,Q_2)$ as a point in $\mb{A}^9$. As 
\[\Conf_{\binom{n}{2}}(\mb{A}^2)\subseteq\mb{A}^{2\binom{n}{2}},\]
we may treat $\mc{R}(B,Q)$ and $\mc{A}(B,Q)$ as algebraic maps on $\mb{A}^{n(n-1)}\times\mb{A}^9$. In order for this change of domain to be well-defined, it suffices to prove that the maps $\mc{R}(B,Q)$ and $\mc{A}(B,Q)$ do not depend (up to squares) on:
\begin{enumerate}
    \item the choice of coordinates on $\mb{P}^2$,
    \item the choice of parameterization of $Q$, and
    \item the choice of representative of the projective equivalence class $[Q_0:Q_1:Q_2]$.
\end{enumerate}
This will be proved in Propositions~\ref{prop:independence for t} and~\ref{prop:independence for a}. We will then show that
\begin{align*}
    \mc{R}(B,Q)&=\prod_{b\in B}\N_{k(b)/k(\ell)}\alpha_b,\\
    \mc{A}(B,Q)&=\det{A_{P_1,\ldots,P_n}}
    \cdot V(B,Q),
\end{align*}
for some regular map $V(B,Q)$,
where we use the notation of Definition~\ref{def:conic index} and Equation~\ref{equation: matrix of index}, respectively. If we can prove that $\mc{R}(B,Q)\cdot V(B,Q)=\mc{A}(B,Q)$ whenever $\mc{A}(B,Q)\neq 0$, then the claim that
\[\con(X,\ell)=\ind_\ell\sigma_F\]
will follow from Definition~\ref{def:conic index} and Equation~\ref{eq:index as determinant}.

To prove that $\mc{R}(B,Q)\cdot V(B,Q)=\mc{A}(B,Q)$ whenever $\mc{A}(B,Q)\neq 0$, we will show: 
\begin{enumerate}[(i)]
    \item if $V(B,Q)\neq 0$, then $\mc{A}(B,Q)\neq 0$ (Lemma~\ref{lem: det(VB) =0 implies ABQ=0}),
    \item $V(B,Q)\cdot\mc{R}(B,Q)$ and $\mc{A}(B,Q)$ have the same zero locus (Proposition \ref{prop: same zero locus}), 
    \item there exist infinitely many $(B,Q)$ (over $\overline{k(\ell)}$) such that $V(B,Q)\cdot\mc{R}(B,Q)=\mc{A}(B,Q)\neq 0$ (Proposition \ref{prop: agree at somethint nonzero}).
\end{enumerate}
We then conclude that $\mc{A}(B,Q)=V(B,Q)\cdot\mc{R}(B,Q)$ in Corollary~\ref{cor:equal}.
\end{proof}

\begin{rem}
    Note that our choices of $(B,Q)$ in item (iii) need not satisfy any genericity conditions beyond $\mc{A}(B,Q)\neq 0$.
\end{rem}

\begin{proof}[Proof of Theorem~\ref{thm:local index}]
By Proposition~\ref{prop:type1=type2}, the Segre index equals the conic index. By Theorem~\ref{thm:local index = conic index}, the conic index equals the local degree. This means that the local index equals the Segre index, as claimed.
\end{proof}

The rest of the paper is devoted to proving the propositions and lemmas referenced in the proof of Theorem~\ref{thm:local index = conic index}. For the remainder of this section fix a rational conic model $(B,\beta,Q)$ of $\mc{G}(\ell)$. Let $[Z:X:Y]$ be coordinates on $\mb{P}^2$ such that $B\subseteq\{Z\neq 0\}$. We denote the affine coordinates on $\{Z\neq 0\}$ by $(x,y)$. Let $[Q_0:Q_1:Q_2]$ be a parameterization of $Q$ (which exists, as $Q$ is a parameterized conic).

\subsection{Defining $\mc{R}(B,Q)$}
Recall that 
\[\con(X,\ell)=\Tr_{k(\ell)/k}\langle \prod_{b \in B} \N_{k(b)/k(\ell)} \alpha_b \rangle.\]  
Our goal is to express $\prod_{b \in B} \N_{k(b)/k(\ell)} \alpha_b$ in terms of $B$ and $Q$, where $\alpha_b\in k(b)^\times$ is such that the fixed points of the involution $\mu_b$ are defined over $k(b)(\sqrt{\alpha_b})$. Let $b=(b_x,b_y)\in \A^2=\{Z\neq 0\}\subset \P^2$. We have 
\[
\alpha_b=\Disc_{t_0, t_1}\big(\Disc_{u, v}(t_0(Q_1(u, v) - b_x Q_0(u, v)) - t_1(Q_2(u, v) - b_y Q_0(u, v)))\big),
\]
which simplifies to  
\[
\Res(Q_1 - b_x Q_0, Q_2 - b_y Q_0)
\]  up to the factor $16\in k(\ell)^2$.
We can now define $\mc{R}(B,Q)$ as  
\begin{equation}\label{equation: prod-res}
    \mc{R}(B,Q) = \prod_{b\in B(\overline{k(\ell)})} \Res(Q_1 - b_x Q_0, Q_2 - b_y Q_0).
\end{equation}

 Note that the product defining $\mc{R}(B,Q)$ runs over the set of geometric points of $B$. Each closed point $b\in B$ consists of a Galois orbit of geometric points, and the product over such an orbit gives the field norm $\N_{k(b)/k(\ell)}$. Thus, we have
\[\mc{R}(B,Q)=\prod_{b\in B}\N_{k(b)/k(\ell)}\alpha_b\]
up to squares in $k(\ell)$.
 In particular, if $B$ and $Q$ are defined over $k(\ell)$, then so is $\mc{R}(B,Q)$.

\begin{prop}\label{prop:independence for t}
    As an element of $k(\ell)^\times/(k(\ell)^\times)^2$, the value $\mc{R}(B,Q)$ does not depend on:
    \begin{enumerate}[(i)]
    \item the choice of coordinates on $\mb{P}^2$,
    \item the choice of parameterization of $Q$, and
    \item the choice of representative of the projective equivalence class $[Q_0:Q_1:Q_2]$.
\end{enumerate}
\end{prop}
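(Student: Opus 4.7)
My plan is to verify invariance separately under each of the three operations. In each case I will show that every individual factor $\Res(Q_1 - b_x Q_0, Q_2 - b_y Q_0)$ in the definition of $\mc{R}(B,Q)$ is multiplied by a square in $\overline{k(\ell)}^\times$, and that the total prefactor $\prod_{b\in B(\overline{k(\ell)})} (\cdot)$ is Galois-invariant, and hence lies in $k(\ell)^\times$. The two workhorse identities are the standard properties of the resultant of degree-$2$ binary forms: the $\mathrm{GL}_2$ action formula
\[\Res(p f + q g,\, r f + s g) = (ps - qr)^2 \Res(f, g),\]
which specializes to $\Res(\lambda f, \lambda g) = \lambda^4 \Res(f,g)$, together with the substitution formula $\Res(f\circ\Phi, g\circ\Phi) = (\det\Phi)^{(\deg f)(\deg g)} \Res(f,g)$ for $\Phi \in \mathrm{GL}_2$.

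Cases (iii) and (ii) are nearly immediate from these identities. For (iii), substituting $(Q_0,Q_1,Q_2) \mapsto (\lambda Q_0, \lambda Q_1, \lambda Q_2)$ rescales each $Q_i - b_? Q_0$ by $\lambda$, so each factor is multiplied by $\lambda^4$ and the total rescaling is $\lambda^{4|B|}$, manifestly a square in $k(\ell)^\times$. For (ii), the reparameterization $\Phi \in \mathrm{PGL}_2$ substitutes in $(u,v)$ without changing coefficients in the conic, so each factor picks up $(\det\Phi)^4$ by the substitution formula, and the total is again a fourth power.

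Case (i) is where the work lies. Writing $A \in \mathrm{GL}_3$ with entries $a_{ij}$, I would introduce $L_i(b) := \sum_j a_{ij} b_j$ (with $b_0 = 1$) for the new homogeneous coordinates of $b$, and $M_i(Q) := \sum_j a_{ij} Q_j$ for the new parameterization of $Q$, so that $Q_1' - b_x' Q_0' = L_0(b)^{-1}(L_0 M_1 - L_1 M_0)$ and similarly for the second polynomial. The bilinear identity
\[ L_0 M_i - L_i M_0 = \sum_{0 \leq j < k \leq 2} (a_{0j} a_{ik} - a_{ij} a_{0k})(b_j Q_k - b_k Q_j), \]
together with the rewrite $b_x Q_2 - b_y Q_1 = -b_y(Q_1 - b_x Q_0) + b_x(Q_2 - b_y Q_0)$ that collapses the third $2\times 2$ minor onto the first two, expresses
\[ \begin{pmatrix} Q_1' - b_x' Q_0' \\ Q_2' - b_y' Q_0' \end{pmatrix} = \frac{1}{L_0(b)}\, N(b) \begin{pmatrix} Q_1 - b_x Q_0 \\ Q_2 - b_y Q_0 \end{pmatrix} \]
for an explicit $2\times 2$ matrix $N(b)$ with entries in $k(b)$. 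Applying the two resultant identities gives
\[ \Res(Q_1' - b_x' Q_0',\, Q_2' - b_y' Q_0') = \frac{(\det N(b))^2}{L_0(b)^4} \Res(Q_1 - b_x Q_0,\, Q_2 - b_y Q_0), \]
which is a square in $\overline{k(\ell)}^\times$. Since $A$ is defined over $k(\ell)$ and $B$ is Galois-invariant, the product $\prod_{b \in B(\overline{k(\ell)})} \det N(b)/L_0(b)^2$ is Galois-equivariant and lies in $k(\ell)^\times$, so its square is a square in $k(\ell)^\times$ as required.

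The main obstacle is organizing the algebra in (i) so that the transformation of the two relevant polynomials really does assemble into a clean $2 \times 2$ matrix action. The crucial observation is the rewrite killing the third minor $b_x Q_2 - b_y Q_1$; without it one is left with a $3 \times 2$ relation from which it is not obvious that the resultant picks up a square. Once this collapse is in place, the two resultant identities apply directly, and Galois-invariance reduces to the fact that $B$ and $A$ are both defined over $k(\ell)$.
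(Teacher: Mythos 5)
Your proof is correct. Parts (ii) and (iii) match the paper's argument up to packaging: the paper decomposes a M\"obius transformation into translations, inversions, and scalings and checks each, while you invoke the single covariance formula $\Res(f\circ\Phi,g\circ\Phi)=(\det\Phi)^{\deg f\cdot\deg g}\Res(f,g)$; both land on the fourth power $(\det\Phi)^4$, and (iii) is identical. The genuine difference is in (i), where the paper simply asserts the factor $(a_{11}+a_{12}b_x+a_{13}b_y)^2(\det A)^2$ as the output of a computer-algebra computation and then runs the same Galois-orbit descent you do. You instead derive the square structurally: the bilinear identity for $L_0M_i-L_iM_0$ in terms of the $2\times 2$ minors of $A$, combined with the collapse $b_xQ_2-b_yQ_1=-b_y(Q_1-b_xQ_0)+b_x(Q_2-b_yQ_0)$, exhibits the transformed pair as $L_0(b)^{-1}N(b)$ applied to the original pair with $N(b)\in\mathrm{GL}_2(k(b))$, and the weight formula $\Res(pf+qg,rf+sg)=(ps-qr)^2\Res(f,g)$ for binary quadratics then yields the square $(\det N(b))^2/L_0(b)^4$ with no computation. (In fact a Desnanot--Jacobi computation gives $\det N(b)=\det(A)\,L_0(b)$, so your factor agrees with the paper's modulo $L_0(b)^4$.) Your route buys a computer-free explanation of \emph{why} the factor is a square --- it is the determinantal weight of an induced $\mathrm{GL}_2$-action on the pencil spanned by $Q_1-b_xQ_0$ and $Q_2-b_yQ_0$ --- at the cost of a slightly longer setup; and you correctly include the final step, also present in the paper, that the product of the square roots over the Galois-stable set $B(\overline{k(\ell)})$ is Galois-fixed, so the total factor is a square in $k(\ell)^\times$ rather than merely in $\overline{k(\ell)}^\times$.
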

\begin{proof}
All three of these statements can be verified computationally, as we now explain.
\begin{enumerate}[(i)]
    \item Any change of coordinates on $\mb{P}^2$ can be represented by (the projective class of) some $(a_{ij})=A\in \operatorname{GL}_3(k(\ell))$. One can compute directly (with your favorite computer algebra system) that after such a coordinate change, we get
    \begin{equation}\label{eq:resultant by square}
        (a_{11}+a_{12}b_x+a_{13}b_y)^2\cdot (\det A)^2\cdot \Res(Q_2-b_xQ_1,Q_3-b_yQ_1),
    \end{equation}
    which differs from $\Res(Q_2-b_xQ_1,Q_3-b_yQ_1)$ by a square. Note that even if $(b_x,b_y)$ is not defined over $k(\ell)$, the value $\mc{R}(B,Q)$ is a product over all $b\in B(\overline{k})$. Our assumption that $B$ is defined over $k(\ell)$ implies that the Galois conjugates of Equation~\ref{eq:resultant by square} will also by factors in this product, so that $\mc{R}(B,Q)$ will only change by a square in $k(\ell)^\times$ after our change of coordinates $A$.
    \item Choosing a different parametrization of $Q$ is simply precomposing with an automorphism of $\mathbb P^1$. It suffices to show that a resultant $\Res(A(z),B(z))$ changes by a square after Möbius transformations when $\deg A = \deg B = 2$. Applying Möbius transformations to $A$ and $B$ in this case is equivalent to taking new polynomials
    \[A'(z) = (cz+d)^2A\left(\frac{az+b}{cz+d}\right),
\qquad
    B'(z) = (cz+d)^2A\left(\frac{az+b}{cz+d}\right).\]

    Classical properties of resultants imply that
    \begin{itemize}
        \item $\Res(A(z+a),B(z+b)) = \Res(A(z),B(z))$,
        \item $\Res(A(az),B(az)) = a^{\deg A\cdot\deg B}\Res(A(z),B(z))=a^4\Res(A,B)$, and
        \item $\Res(z^{\deg A}A(1/z),z^{\deg B}B(1/z)) = (-1)^{\deg A\cdot\deg B}\Res(A(z),B(z)) = \Res(A,B)$.
   \end{itemize}

    Since Möbius transformations are compositions of translations, invertions and scalar multiplication (which are invariant up to squares), we get the result. In particular, by computing directly, we can see that
    \[\Res(A',B') = \det\left(\begin{matrix}a &b\\c & d\end{matrix}\right)^4\Res(A,B).\]

    \item For a different choice of projective class, we have $Q_i'=\lambda Q_i$ for some $\lambda\in k(\ell)^{\times}$. Again, it suffices to see that $\Res(\lambda A,\lambda B) = \Res(A,B)$ up to squares. Indeed, we are simply multiplying the Sylvester matrix by $\lambda$ and therefore
    \[\Res(\lambda A,\lambda B) = \lambda^{\deg A+\deg B}\Res(A,B).\]
    In our case this implies that the difference will given by a factor $\lambda ^4$, which is a square.\qedhere
\end{enumerate}
\end{proof}

\subsection{Defining $\mc{A}(B,Q)$}
The definition of the regular function $\mc{A}(B,Q)$ is more involved. The strategy is to construct a basis $\beta'$ in terms of $(B,Q)$. This will allow us to construct a rational curve
\[g_{B,\beta'}\circ[Q_0:Q_1:Q_2]:\mb{P}^1\to\mb{P}^{n-1}.\]
We will then define the regular function $\mc{A}(B,Q)$ as the determinant of the matrix $A_{P_1,\ldots,P_n}$ (Equation~\ref{equation: matrix of index}), where $P_1,\ldots,P_n$ are the coordinate functions of $g_{B,\beta'}\circ[Q_0:Q_1:Q_2]$.

We will now describe how to construct our basis $\beta'$. Let $B(\overline{k})=\{b_1,\ldots,b_m\}$, where $m=\binom{n}{2}$. In our chosen affine patch, write $b_i=(b_{i,x},b_{i,y})$. Consider the following interpolation matrix:
\begin{equation}\label{equation: matrixN}
    \VB = \begin{pmatrix}
        1 &b_{1,x}& b_{1,y}& b_{1,x}^2 & b_{1,x}b_{1,y} &\cdots& b_{1,y}^{n-2}\\
        1 &b_{2,x}& b_{2,y}& b_{2,x}^2 & b_{2,x}b_{2,y} &\cdots& b_{2,y}^{n-2}\\
        \vdots & \vdots & \vdots & \vdots & \vdots & \ddots & \vdots\\
        1 &b_{m,x}& b_{m,y}& b_{m,x}^2 & b_{m,x}b_{m,y} &\cdots& b_{m,y}^{n-2}\\
      \end{pmatrix}.
\end{equation}

Note that $\VB$ is a square $m\times m$ matrix, since the number of monomials in two variables of degree at most $n-2$ in two variables is simply $\sum_{i=1}^{n-1}i=\binom{n}{2}=m$. We can think of $\VB$ as a matrix in the space of polynomials in two variables of degree at most $n-2$. A solution of the system $\VB f = w$ is a polynomial $f$ for which $f(b_{i,x},b_{i,y}) = w_i$ for all $b_i\in B$. If $\det{\VB} = 0$, then there is a curve of degree $n-2$ through all points of $B$. In this sense, $\VB$ is a generalization of the Vandermonde matrix (as is also the case for more general interpolation matrices).

\begin{lemma}
\label{lemma:fieldofdefofdetVB}
    If $B$ is defined over $k(\ell)$, then $\det \VB$ is defined over a quadratic field extension of $k(\ell)$. Furthermore, $(\det \VB)^2\in k(\ell)$.
\end{lemma}

\begin{proof}

There is a group homomorphism
\[\rho\colon \operatorname{Gal}(\overline{k}/k(\ell))\rightarrow S_{m}\]
where \(\rho(\sigma)\) is the permutation induced by \(\sigma\) on the rows of \(\VB\), for each
\(\sigma \in \operatorname{Gal}(\overline{k}/k(\ell))\). Post-composing with the sign map, we get
\[
\chi = \operatorname{sgn} \circ \rho : \mathrm{Gal}(\overline{k}/k(\ell)) \to \{\pm 1\}.
\]
Let \(H = \ker(\chi)\). 
Then for $\sigma \in H$ we have that
\[\sigma(\det\VB)=\chi(\sigma)\det \VB=\det\VB.\]
In particular, we have \(\det \VB \in \overline{k}^H\) is in the subfield of $\overline{k}$ fixed by $H$. Because \(\chi\) has image in \(\{\pm1\}\), the index
\([\mathrm{Gal}(\overline{k}/k):H]\le 2\). Thus \(\overline{k}^H/k(\ell)\) is at most quadratic.

Since $(\det \VB)^2=\det (\VB\otimes \mathtt{I}_2)$ where $\VB\otimes \mathtt{I}_2$ is the Kronecker product of $\VB$ with the $2\times 2$ identity matrix, and $\chi(\VB\otimes \mathtt{I}_2)=1$, we get $(\det\VB)^2\in k(\ell)$.
\end{proof}

In order to get the linear system $L_B$ of curves of degree $n-1$ through the points $B$, we need to add the following $n$ columns to $\VB$:
\begin{equation}\label{equation: matrix RB}
   \RB =  \begin{pmatrix} 
    b_{1,x}^{n-1} & b_{1,x}^{n-2}b_{1,y} & \cdots & b_{1,y}^{n-1}\\
    b_{2,x}^{n-1} & b_{2,x}^{n-2}b_{2,y} & \cdots & b_{2,y}^{n-1}\\
    \vdots & \vdots & \ddots & \vdots \\
    b_{m,x}^{n-1} & b_{m,x}^{n-2}b_{m,y} & \cdots & b_{m,y}^{n-1}
    \end{pmatrix}.
\end{equation}

Appending $\RB$ to $\VB$ yields the $m\times (m+n)$-matrix 
\[\LB = \left[\VB\mid\RB\right].\]

By construction, elements of the kernel of $\LB$ are exactly the elements of the linear system $L_B$. Our next goal is to construct $\beta'$ as a particular basis of $\ker{\LB}$. To begin, let $\VB^*$ be the adjugate (i.e.~cofactor transpose) of $\VB$ and define a block matrix
\begin{equation}\label{equation: matrix KB}
    \KB := \begin{pmatrix}
        -\VB^*\cdot\RB \\
        \det{\VB}\cdot \mathtt{I}_n
    \end{pmatrix},
\end{equation}
where $\mathtt{I}_n$ denotes the $n\times n$ identity matrix. Note that $\LB\cdot \KB = 0$. If $\det\VB\neq 0$, the columns of $\KB$ are linearly independent, in which case the columns of $\KB$ form a basis for $\ker{\LB}$. Let $\beta'$ be such a basis.

Finally, let $\mathtt{Q}$ be the $(2n+1)\times(m+n)$ matrix representing the linear map
\begin{align*}
    k(\ell)[X,Y,Z]_{(n-1)}&\to k(\ell)[u,v]_{(2n-2)}\\
    f(x,y,z)&\mapsto f(Q_0,Q_1,Q_2).
\end{align*}
Then $\mathtt{Q}\cdot\KB$ is a matrix whose columns give the coefficients of the $n$ coordinate polynomials of the parameterized curve $C=g_{B,\beta'}(Q)\subseteq\mb{P}^{n-1}$. Denote these polynomials by $P_1',\ldots,P_n'$, and let 
\begin{equation}\label{eq:defining ABQ}
\mc{A}(B,Q):=\det{A_{P_1',\ldots,P_n'}},
\end{equation}
where $A_{P_1',\ldots,P_n'}$ is the matrix given in Equation~\ref{equation: matrix of index}.

Ideally, we would like to replace \(\VB^*\) by \(\VB^{-1}\), since this would eliminate the factor
\((\det \VB)^{2n}\) in the expression for \(\mathcal{A}(B,Q)\).  
However, this requires \(\VB\) to be invertible, which need not hold in general.

In case \(\VB\) is invertible, we have
\[
\VB^* = \det(\VB)\,\VB^{-1}
\]
and can define
\[
\KB' \coloneqq \frac{1}{\det \VB}\,\KB
= \begin{pmatrix}
-\VB^{-1}\RB \\
\mathtt{I}_n
\end{pmatrix}.
\]

In this case, we may use \(\KB'\) in place of \(\KB\) to obtain a basis \(\beta''\) and associated
polynomials \(P_1'',\ldots,P_n''\).
A direct computation then shows that
\begin{equation}
\label{eq:AP1''...Pn''}
\mathcal{A}(B,Q)
= (\det \VB)^{2n}\cdot \det A_{P_1'',\ldots,P_n''}.
\end{equation}
We define
\[V(B,Q)\coloneqq (\det \VB)^{2n}.\]
Note that $V(B,Q)$ is a morphism (rather than just a rational map), since it is well-defined even when $\det \VB=0$.

\begin{prop}
\label{prop:fieldofdefP''}
If $\det V_B\ne0$, $\beta''$ is a basis for the linear system $L_B$ over $k(\ell)$ and the polynomials $P_1'',\dots, P_n''$ have coefficients in $k(\ell)$.
\end{prop}
\begin{proof}
    Since the lower part of the matrix $K_B'$ is the identity, it is clear that the columns forming $\beta$ are all linearly independent. It remains to prove that the entries in $\VB^{-1}\RB$ are all in $k(\ell)$. 

    Since $B$ is defined over $k(\ell)$, the points $b_1,\dots,b_n$ form a closed set under the Galois action, i.e, for each $\sigma \in {\rm Gal}(\bar k/k(\ell))$, we have that $\sigma(b_i) = (\sigma(b_{i,x}),\sigma(b_{i,y})) =(b_{j,x},b_{j,y}) = b_j$ for some $j$ (which can be equal to $i$). This implies that the action of any element of the Galois group corresponds to multiplication by a permutation matrix $P_{\sigma}$. 
    
     The rows of the matrices $\VB$ and $\RB$ are monomials in $b_{i,x}$ and $b_{i,y}$. The action of $\sigma\in{\rm Gal}(\bar k/k)$, therefore, just permutes the rows. 
     \begin{align*}
    \sigma(\VB) &= P_{\sigma}\VB\\
     \sigma(\RB) &= P_{\sigma} \RB
     \end{align*}
     Since $\sigma(\VB^{-1}) = (\sigma(\VB))^{-1} = \VB^{-1}P_{\sigma}^{-1}$, we have:
     $$\sigma(\VB^{-1}\RB) = \sigma(\VB^{-1})\sigma(\RB)= \VB^{-1}P_{\sigma}^{-1}P_\sigma\RB = \VB^{-1}\RB$$

     This implies that the entries of $\VB^{-1}\RB$ are all in $k(\ell)$ as we wanted.
\end{proof}

Note that in contrast to $P_1'',\ldots,P_n''$, the coefficients of $P_1',\ldots,P_n'$ might not have coefficients in $k(\ell)$, but rather in some quadratic field extension (see Lemma \ref{lemma:fieldofdefofdetVB}).

Additionally, if $\det\VB=0$, then $\beta'$ is not necessarily a basis. However, we have $\det\VB\neq 0$ for a general hypersurface in $\mb{P}^{n+1}$. Indeed, if $\det\VB=0$, then there  exists a degree $n-2$ curve passing through $B$. The projective span of this curve is a $(2n-4)$-secant of dimension $n-3$ to the Gauss curve corresponding to our hypersurface. As the codimension of this secant is 2, we have infinitely many $(2n-4)$-secants of codimension 1, which does not occur for general hypersurfaces of degree $2n-1$ in $\mb{P}^{n+1}$.

Nevertheless, the matrix $\mathtt{Q}\cdot \mathtt{K_B}$ can still be computed even when $\det\VB=0$ and, therefore, the map $\mc{A}(B,Q)$ can still be defined in this case. In fact, we will prove in Lemma~\ref{lem: det(VB) =0 implies ABQ=0} that $\mc{A}(B,Q)=0$ whenever $\det{\VB}=0$. This is the main reason for considering the adjugate matrix $\VB^*$ instead of the inverse: so we can have the map defined in the whole affine space.

As previously mentioned, we need to verify that $\mc{A}(B,Q)$ and the determinant of the local index matrix of our Gauß curve $\mc{G}(\ell)$ agree up to a regular function $V(B,Q)$, that will also appear when comparing $\mc{A}$ and $\mc{R}$. This follows from the fact that they differ by a projective change of coordinates:

\begin{lemma}\label{lem:determinants up to squares}
Let $(B,\beta,Q)$ be a rational conic model of a Gauß curve $\mc{G}(\ell)$. If $\gamma$ is another basis of the linear system of degree $2n-1$ curves through $B$, then the coordinate polynomials $P_1,\ldots,P_n$ of $\mc{G}(\ell)$ and $P_1',\ldots,P_n'$ of $g_{B,\gamma}\circ Q$ differ by a projective change of coordinates, and
    \[\det{A_{P_1,\ldots,P_n}}=(\det M)^2\det{A_{P_1',\ldots,P_n'}}\]
    where $M=(m_{ij})\in\mathrm{GL}_n(\overline{k})$ is the matrix such that $\beta=M\cdot\gamma$.
\end{lemma}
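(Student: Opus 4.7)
The plan is to reduce both assertions to the observation that two bases of the $n$-dimensional $k(\ell)$-vector space $L_B$ differ by some $M = (m_{ij}) \in \op{GL}_n(k(\ell))$. With $\beta = (f_1, \ldots, f_n)$ and $\beta' = (f_1', \ldots, f_n')$ where $f_i' = \sum_j m_{ij} f_j$, the maps $g_{B,\beta}$ and $g_{B,\beta'}$ from $\mb{P}^2 \dashrightarrow \mb{P}^{n-1}$ differ by the element of $\op{PGL}_n$ defined by $M$. Composing with the parameterized conic $[Q_0 : Q_1 : Q_2]$ (and recalling that $P_i = f_i(Q_0,Q_1,Q_2)$, $P_i' = f_i'(Q_0,Q_1,Q_2)$), we get $P_i' = \sum_j m_{ij} P_j$, which proves the first assertion.

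For the determinant comparison, I would exploit the structure of $A_{P_1, \ldots, P_n}$ from Equation~\ref{equation: matrix of index}: its columns represent the polynomials $uP_1, vP_1, uP_2, vP_2, \ldots, uP_n, vP_n$ expanded in the monomial basis $\{u^{2n-1}, u^{2n-2}v, \ldots, v^{2n-1}\}$. Since multiplication by $u$ or by $v$ is $k(\ell)$-linear, substituting $P_i' = \sum_j m_{ij} P_j$ sends the $uP_i$-column to $\sum_j m_{ij}(uP_j)$ and the $vP_i$-column to $\sum_j m_{ij}(vP_j)$. This describes a $2n \times 2n$ matrix $T$ with $A_{P_1', \ldots, P_n'} = A_{P_1, \ldots, P_n} \cdot T$, whose $2\times 2$ blocks (indexed by $(j,i)$) are each $m_{ij} \mathtt{I}_2$.

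The final step is to check $\det T = (\det M)^2$. After reordering rows and columns by first grouping all $u$-indexed entries and then all $v$-indexed entries (using the same permutation on rows and columns, so signs cancel), the matrix $T$ becomes block diagonal with two copies of $M^T$. Hence $\det T = (\det M)^2$, and
\[
\det A_{P_1', \ldots, P_n'} = (\det M)^2 \cdot \det A_{P_1, \ldots, P_n},
\]
so the two determinants agree up to squares in $k(\ell)^\times$.

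I do not anticipate any real obstacle: the lemma is essentially a bookkeeping statement about how $A_{P_1, \ldots, P_n}$ transforms under a linear change of the $P_i$. The only delicate point is tracking the column ordering when identifying $T$ with a block-diagonal matrix, but this only affects a sign that turns out to be trivial since the required row and column permutations coincide.
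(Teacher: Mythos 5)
Your proposal is correct and follows essentially the same route as the paper: both identify the change of basis with a matrix $M\in\op{GL}_n(k(\ell))$, observe that $A_{P_1,\ldots,P_n}$ and $A_{P_1',\ldots,P_n'}$ differ by right multiplication by the Kronecker product $M\otimes\mathtt{I}_2$, and conclude since its determinant is $(\det M)^2$. Your permutation argument for $\det(M\otimes\mathtt{I}_2)=(\det M)^2$ just makes explicit a step the paper asserts directly.
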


\begin{proof}
    We then have $\beta_i=\sum_{j=1}^n m_{ij}\gamma_j'$, and hence composing with our conic $Q$ gives us $P_i=\sum_{j=1}^nm_{ij}P_j'$. In particular, the class of $M$ in $\mathrm{PGL}_n(k(\ell))$ gives us a projective equivalence from $[P_1:\ldots:P_n]$ to $[P_1':\ldots:P_n']$. Now let
    \[\tilde{M}=\begin{pmatrix}
        m_{11} & 0 & m_{21} & 0 & \cdots & m_{n1} & 0\\
        0 & m_{11} & 0 & m_{21} & \cdots & 0 & m_{n1}\\
        m_{12} & 0 & m_{22} & 0 & \cdots & m_{n2} & 0\\
        0 & m_{12} & 0 & m_{22} & \cdots & 0 & m_{n2}\\
        \vdots & \vdots & \vdots & \vdots & \ddots & \vdots & \vdots \\
        m_{1n} & 0 & m_{2n} & 0 & \cdots & m_{nn} & 0\\
        0 & m_{1n} & 0 & m_{2n} & \cdots & 0 & m_{nn}
    \end{pmatrix}.\]
    (In other words, $\tilde{M}=M\otimes\mathtt{I}_2$ is the Kronecker product of $M$ and the $2\times 2$ identity matrix.) Note that $A_{P_1,\ldots,P_n}=A_{P_1',\ldots,P_n'}\cdot\tilde{M}$. Moreover, we have $\det{\tilde{M}}=\det{M}^2$, so it follows that $\det{A_{P_1,\ldots,P_n}}=\det{A_{P_1',\ldots,P_n'}}$ up to squares.
\end{proof}
\begin{rem}
\label{rem:detdifferuptosquares}
 If both $\beta$ and $\gamma$ in Lemma \ref{lem:determinants up to squares} are defined over $k(\ell)$, the lemma implies that the determinants $\det A_{P_1,\dots,P_n}$ and $A_{P_1',\dots, P_n'}$ agree up to squares in the field $k(\ell)$.
\end{rem}
 
\begin{rem} 
Suppose we start with $P_1,\ldots,P_n$ with coefficients in $k(\ell)$ and find a conic model $(B,\beta, Q)$ as in Corollary \ref{cor:rational conic model}. We then continue as in this section with $(B,Q)$ to get a new basis $\beta'$. For a general choice of $P_1,\ldots,P_n$, the matrix $\VB$ is invertible, so we can replace $\KB$ by $\KB'$ and get a basis $\beta''$ and polynomials $P_1'',\ldots,P_n''$ (see discussion before Proposition \ref{prop:fieldofdefP''}). Then $\beta$ and $\beta''$ are both defined over $k(\ell)$ by Proposition \ref{prop:fieldofdefP''}, so by Lemma \ref{lem:determinants up to squares} and Remark \ref{rem:detdifferuptosquares} we get that $\det A_{P_1,\ldots,P_n}$ and $\det A_{P_1'',\ldots,P_n''}$ agree up to squares in $k(\ell)$. Thus the local index we want to find is the trace $\Tr_{k(\ell)/k}$ of
  \[\langle \det A_{P_1,\ldots,P_n}\rangle=\langle \det A_{P_1'',\ldots,P_n''}\rangle=\left\langle \frac{\det A_{P_1',\ldots,P_n'}}{V(B,Q)}\right\rangle\]
  with 
  \[V(B,Q)=(\det \VB)^{2n} \in k(\ell)\]
  by \eqref{eq:AP1''...Pn''}.
\end{rem}

Finally, we need to justify that $\mc{A}(B,Q)$ does not depend on the various choices related to picking representatives of $B$ and $Q$.

\begin{prop}\label{prop:independence for a}
As an element of $k(\ell)^\times/(k(\ell)^\times)^2$, the value $\mc{A}(B,Q)$ does not depend on:
\begin{enumerate}[(i)]
\item the choice of coordinates on $\mb{P}^2$,
\item the choice of parameterization of $Q$, and
\item the choice of representative of the projective equivalence class $[Q_0:Q_1:Q_2]$.
\end{enumerate}
\end{prop}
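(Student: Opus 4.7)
The proof splits into three independent arguments corresponding to (i), (ii), and (iii). In each case, the goal is to show that after the specified modification, the determinant $\det A_{P_1',\ldots,P_n'}$ changes by a square in $k(\ell)^\times$. The plan is to dispatch (ii) and (iii) first by direct linear-algebra calculations with $A_{P_1',\ldots,P_n'}$, and then handle (i) by appealing to Lemma~\ref{lem:determinants up to squares}.

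For (ii), interpret $A_{P_1',\ldots,P_n'}$ as the matrix of the multiplication map $\mu_P : V_1^{\oplus n} \to V_{2n-1}$, $(\lambda_1, \ldots, \lambda_n) \mapsto \sum_{i=1}^n \lambda_i P_i'$, where $V_d$ denotes the space of degree-$d$ homogeneous polynomials in $(u,v)$ in its monomial basis. A reparameterization $\phi \in \mathrm{GL}_2$ replaces each $P_i'$ with $P_i' \circ \phi$, and writing $\phi_*|_{V_d}(f) := f \circ \phi$, a short computation based on $(\lambda_i \circ \phi)(P_i' \circ \phi) = (\lambda_i P_i') \circ \phi$ yields $\mu_{P \circ \phi} = \phi_*|_{V_{2n-1}} \circ \mu_P \circ (\phi_*|_{V_1}^{\oplus n})^{-1}$. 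Since $\det(\phi_*|_{V_d}) = (\det \phi)^{d(d+1)/2}$ (the eigenvalues of $\phi_*$ on $V_d$ are products of the eigenvalues of $\phi$ on $V_1$), $\det A$ is multiplied by $(\det \phi)^{n(2n-1)-n} = (\det \phi)^{2n(n-1)}$, which is a square. For (iii), rescaling $[Q_0:Q_1:Q_2]$ by $\lambda$ scales each $P_i'$ by $\lambda^{n-1}$; since every one of the $2n$ columns of $A_{P_1',\ldots,P_n'}$ is thereby scaled by $\lambda^{n-1}$, the determinant is multiplied by $\lambda^{2n(n-1)}$, again a square.

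For (i), let $A \in \mathrm{GL}_3(k(\ell))$ implement the change of coordinates on $\mb{P}^2$; after a further coordinate change we may assume $AB \subseteq \{Z \neq 0\}$, so $\mc{A}(AB, AQ)$ is well defined. The pullback basis $A^*\beta' := \{f \circ A^{-1} : f \in \beta'\}$ and the basis $\beta''$ produced by applying the recipe of $\mathtt{K}_{AB}$ to $(AB,AQ)$ are both bases of the linear system $L_{AB}$. The key identity is $g_{B, \beta'} \circ Q = g_{AB, A^*\beta'} \circ AQ$ as parameterized maps $\mb{P}^1 \to \mb{P}^{n-1}$, which follows immediately from $\beta_i'(A^{-1}(AQ(u,v))) = \beta_i'(Q(u,v))$. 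Consequently, $g_{AB, \beta''} \circ AQ$ and $g_{AB, A^*\beta'} \circ AQ$ differ only by the invertible linear transformation expressing $\beta''$ in terms of $A^*\beta'$, i.e.~by a projective change of coordinates on $\mb{P}^{n-1}$. Lemma~\ref{lem:determinants up to squares} then yields $\mc{A}(AB, AQ) = \mc{A}(B, Q)$ in $k(\ell)^\times/(k(\ell)^\times)^2$. The main obstacle here is conceptual rather than computational: one must notice that the recipe-produced basis $\beta''$ need not equal the natural pullback $A^*\beta'$, but since both span $L_{AB}$ the resulting curves in $\mb{P}^{n-1}$ are projectively equivalent, and Lemma~\ref{lem:determinants up to squares} is exactly what converts this into equality of determinants modulo squares.
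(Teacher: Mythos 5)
Your proposal is correct, and parts (i) and (iii) follow the paper's own proof essentially verbatim: (i) via the observation that the pullback basis reproduces the same parameterized curve, with Lemma~\ref{lem:determinants up to squares} absorbing the discrepancy between the pullback basis and the recipe-produced basis, and (iii) via scaling all $2n$ columns by $\lambda^{n-1}$. Part (ii) is where you genuinely diverge. The paper argues that the coefficient matrices $A_{P_1',\ldots,P_n'}$ and $A_{P_1'\circ\vphi,\ldots,P_n'\circ\vphi}$ differ by a single right factor of the form $M\otimes\mathtt{I}_2$, which presupposes that each $P_i'\circ\vphi$ lies in the span of $P_1',\ldots,P_n'$; you instead write $A$ as the matrix of the multiplication map $V_1^{\oplus n}\to V_{2n-1}$ and conjugate by the induced actions of $\vphi$ on source and target, getting the explicit factor $(\det\vphi)^{n(2n-1)-n}=(\det\vphi)^{2n(n-1)}$. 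Your version makes the left action on $V_{2n-1}$ explicit rather than folding everything into a right factor, and it requires no assumption about $\vphi^*$ preserving the span of the $P_i'$; it is also indifferent to the choice of $\mathrm{GL}_2$-lift of the M\"obius transformation, since rescaling the lift perturbs the answer only by squares. What the paper's phrasing buys is brevity and uniformity with the analogous step in Lemma~\ref{lem:determinants up to squares}; what yours buys is a self-contained computation whose hypotheses are visibly satisfied.
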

\begin{proof}
    As with Proposition~\ref{prop:independence for t}, this boils down to some computations.
    \begin{enumerate}[(i)]
    \item Let $\varphi\colon \P^2\rightarrow \P^2$ be a change of coordinates, and let $B'=\varphi(B)$. Then $\varphi$ induces a map $\widetilde{\varphi}\colon L_B\rightarrow L_{B'}$ of linear systems given by $C\mapsto C\circ \varphi^{-1}$. Pick $\beta$ be a basis for $L_B$. We then obtain a basis $\beta'$ for $L_{B'}$ given by $\beta'_i=\widetilde{\varphi}(\beta_i)$ for each $\beta_i\in B$ (recall from Lemma \ref{lem:determinants up to squares} that the choice of basis does not matter). Now
    \begin{align*}
        g_{B',\beta'}(\varphi(Q))&=[\beta_1'\circ\varphi(Q):\ldots:\beta_n'\circ\varphi(Q)]\\
        &=[\beta_1\circ\varphi^{-1}\circ\varphi(Q):\ldots:\beta_n\circ\varphi^{-1}\circ\varphi(Q)]\\
        &=[\beta_1(Q):\ldots:\beta_n(Q)]\\
        &=g_{B,\beta}(Q).
    \end{align*}
    \item Let $[Q_0:Q_1:Q_2]$ be a parameterization of $Q$, and let $\vphi:\mb{P}^1\to\mb{P}^1$ be a Möbius transformation. Then the coordinate polynomials of $g_{B,\beta}(Q)$ and $g_{B,\beta}(Q\circ\vphi)$ satisfy
    \[\{[P_1':\ldots:P_n']\}=\{[P_1'\circ\vphi:\ldots:P_n'\circ\vphi]\},\]
    and hence the coefficient matrices $A_{P_1',\ldots,P_n'}$ and $A_{P_1'\circ\vphi,\ldots,P_n'\circ\vphi}$ differ by $M\otimes\mathtt{I}_2$, where $M$ is some invertible $n\times n$ matrix. It follows that the determinants of these two matrices differ by $\det{M}^2$ (see the proof of Lemma \ref{lem:determinants up to squares}), which is a square.
    \item If $\lambda\in k(\ell)^\times$, then changing $(Q_0,Q_1,Q_2)$ to $(\lambda Q_0,\lambda Q_1,\lambda Q_2)$ changes the matrix $\mathtt{Q}$ by scaling each column by $\lambda^{n-1}$. As a result, the polynomials $P_1',\ldots,P_n'$ (whose coefficients are given by the columns of $\mathtt{Q}\cdot\KB$) are sent to polynomials $\lambda^{n-1} P_1',\ldots,\lambda^{n-1}P_n'$. Since $A_{P_1',\ldots,P_n'}$ has rank $2n$, it follows that
    \[\det{A_{\lambda^{n-1} P_1',\ldots,\lambda^{n-1}P_n'}}=\lambda^{2n(n-1)}\cdot\det{A_{P_1',\ldots,P_n'}},\]
    so these two determinants differ by a square.\qedhere
    \end{enumerate}
\end{proof}

\subsection{Proving $\mc{A}(B,Q)=\mc{R}(B,Q)\cdot (\det\VB)^{2n}$} 

We have now defined our regular maps $\mc{A}(B,Q)$ and $\mc{R}(B,Q)$ that compute the conic index and local index, respectively. The final step is to show that these two regular maps agree up to squares.
These are both regular maps on the affine spaces $\mb{A}^{n(n-1)}\times\mb{A}^9$ parameterizing $(B,Q)$ (see proof of Theorem \ref{thm:local index = conic index}).
Our first goal to this end is to show that $\mc{A}(B,Q)$ and $(\det\VB)^{2n}\cdot\mc{R}(B,Q)$ have the same zero locus.

\begin{lemma}\label{lem: det(VB) =0 implies ABQ=0}
    If $\det{\VB}=0$, then $\mc{A}(B,Q)=0$.
\end{lemma}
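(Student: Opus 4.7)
The plan is to trace how the hypothesis $\det \VB = 0$ propagates through the construction of $\mc{A}(B,Q)$. First I would invoke the standard linear-algebraic fact that for an $m\times m$ matrix $M$ over a field, $\operatorname{rank}(M^*) \leq 1$ whenever $\det M = 0$ (it equals $1$ if $\operatorname{rank} M = m-1$ and $0$ otherwise). Applied to $\VB$, this gives $\operatorname{rank}(\VB^*) \leq 1$, and therefore the product $\VB^*\cdot \RB$ also has rank at most one.

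Next I would examine the shape of $\KB$ under this hypothesis. Since the bottom block $\det(\VB)\,\mathtt{I}_n$ vanishes identically, we have
\[
\KB = \begin{pmatrix} -\VB^*\RB \\ 0 \end{pmatrix}.
\]
Because $\VB^*\RB$ has rank at most one, there is a single vector $v\in k(\ell)^m$ and scalars $\lambda_1,\ldots,\lambda_n\in k(\ell)$ such that the $i$-th column of $\KB$ is $\lambda_i \bigl(\begin{smallmatrix} v \\ 0\end{smallmatrix}\bigr)$. Applying the evaluation matrix $\mathtt{Q}$ then yields polynomials $P_1',\ldots,P_n'$ with $P_i' = \lambda_i P$, where $P := \mathtt{Q}\bigl(\begin{smallmatrix} v\\ 0\end{smallmatrix}\bigr)$ is a single degree $2n-2$ polynomial in $u,v$.

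Finally, I would read off the rank of the Jacobian matrix $A_{P_1',\ldots,P_n'}$ from Equation~\ref{equation: matrix of index}. By inspection of that matrix, the $(2i-1)$-th and $(2i)$-th columns depend linearly only on the coefficients of the single polynomial $P_i'$, representing $uP_i'$ and $vP_i'$ in the monomial basis of $k[u,v]_{2n-1}$. Substituting $P_i' = \lambda_i P$ shows that each such pair of columns equals $\lambda_i$ times the corresponding pair for $P$. Hence all $2n$ columns lie in the two-dimensional span of the first two, so $A_{P_1',\ldots,P_n'}$ has rank at most $2$. Since $n\geq 3$ by Remark~\ref{rem:n geq 3}, this is a $2n\times 2n$ matrix with $2n \geq 6 > 2$, forcing $\mc{A}(B,Q) = \det A_{P_1',\ldots,P_n'} = 0$.

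The only real subtlety is the rank bound on the adjugate, which is the single linear-algebra input driving everything; the rest is bookkeeping about how the block structure of $\KB$ and the pairwise structure of the Jacobian matrix from Equation~\ref{equation: matrix of index} combine to collapse the column span of $A_{P_1',\ldots,P_n'}$.
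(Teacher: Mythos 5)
Your proof is correct, and it takes a genuinely different route from the paper's. The paper argues that when $\det\VB=0$ the bottom block $\det(\VB)\,\mathtt{I}_n$ of $\KB$ vanishes, so each column of $\KB$ encodes a degree $n-1$ form divisible by $z$; hence $Q_0$ is a common factor of $P_1',\ldots,P_n'$, they share a root $[u_0:v_0]$, and the row vector $(u_0^{2n-1},u_0^{2n-2}v_0,\ldots,v_0^{2n-1})$ lies in the kernel of $A_{P_1',\ldots,P_n'}^{\mathsf T}$. You instead exploit the adjugate: $\operatorname{rank}(\VB^*)\leq 1$ when $\det\VB=0$, so all columns of $-\VB^*\RB$ (and hence of $\KB$, the bottom block being zero) are proportional, forcing $P_i'=\lambda_i P$ for a single $P$ and collapsing the column space of $A_{P_1',\ldots,P_n'}$ to the span of the columns representing $uP$ and $vP$. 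Your conclusion is actually stronger (rank at most $2$ rather than corank at least $1$), and your argument needs no passage to $\overline{k}$ to locate a root of $Q_0$. The trade-off is that the paper's mechanism --- ``a common root of the $P_i'$ kills the determinant'' --- is reused verbatim in the proof of Proposition~\ref{prop: same zero locus} to handle the case $B\cap Q\neq\emptyset$, whereas your adjugate argument is specific to the degeneration $\det\VB=0$ and would not substitute there. Both proofs are complete as written.
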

\begin{proof}
    If $\det{\VB} = 0$, then the bottom $n$ rows of $\KB$ are 0. In the polynomials corresponding to the columns of $\KB$, this means that the coefficients of all monomials that do not contain $z$ are 0. (Recall that here, $[x:y:z]$ are our projective coordinates for $\mb{P}^2$, and the columns of $\KB$ give the coefficients of corresponding plane curves.) Hence the $n$ polynomials $P_1',\ldots,P_n'$ given by the columns of $\mathtt{Q}\cdot\KB$ have $Q_0$ as a common factor. In particular, this implies that they have a common root $[u_0:v_0]$. Multiplying the matrix $A_{P_1',\ldots,P_n'}$ on the left by $(u^{2n-1}_0,u^{2n-2}_0v_0,\cdots,u_0v^{2n-2}_0,v^{2n-1}_0)$, we get 
    \[(u_0P_1'(u_0,v_0), v_0P_1'(u_0,v_0),\cdots,u_0P_n'(u_0,v_0),v_0P_n'(u_0,v_0)).\]
    Since $P_i'(u_0,v_0)=0$ for $i=1,\ldots,n$, we have a nontrivial element in the kernel of the transpose of $A_{P_1',\ldots,P_n'}$, and therefore $\det{A_{P_1',\ldots,P_n'}}=\mc{A}(B,Q)=0$.
\end{proof}

The next proposition states that our two regular maps have the same vanishing locus (assuming that $\det{\VB}\neq 0$).

\begin{prop}\label{prop: same zero locus}
    The polynomials $\mc{A}(B,Q)$ and $ (\det{\VB})^{2n}\cdot \mc{R}(B,Q)$ have the same zero locus in $\mb{A}^{2\binom{n}{2}}_{\overline{k(\ell)}}\times\mb{A}^9_{\overline{k}(\ell)}$.
\end{prop}
\begin{proof}
    Note that $\mc{R}(B,Q) = 0$ if and only if $ B\cap Q\neq \emptyset$. Indeed, $\mc{R}(B,Q)=0$ if and only if there exists a point $b\in B(\overline{k})$ for which $Q_1-b_xQ_0$ and $Q_2-b_yQ_0$ have a common root, which occurs if and only if there is a point $[u_0:v_0]\in\mathbb P^1(\overline{k})$ for which $Q_1(u_0,v_0) - b_xQ_0(u_0,v_0) = 0$ and $Q_2(u_0,v_0) - b_yQ_0(u_0,v_0) = 0$. Dividing by $Q_0(u_0,v_0)$, we find that this condition is equivalent to the existence of $[u_0:v_0]\in \P^1_{\overline{k}}$ such that $Q(u_0,v_0) = [1:b_x:b_y]$, which shows that $p\in Q(\overline{k})$. 
    
    By Lemma~\ref{lem: det(VB) =0 implies ABQ=0}, if $\det{\VB} = 0$, then $\detABQ=0$. It thus remains to show that if $\det{\VB}\neq 0$, then $\detABQ = 0$ if and only if $Q\cap B\neq \emptyset$. To this end, assume that we have $b = [1:b_x:b_y]\in (B\cap Q)(\overline{k})$. Then the coordinates of the composition map $g_B\circ Q=\gamma$ have a common root $[u_0:v_0]$, and thus the proof of Lemma~\ref{lem: det(VB) =0 implies ABQ=0} gives us $\detABQ = 0$.

    Conversely, if $\detABQ=0$, then there exist linear polynomials $L_1,\cdots,L_n$ (in $u,v$) such that $\sum_{i=1}^n L_iP_i'=0$ by \cite[Lemma 3.2.2.(3)]{FinashinKharlamov2021}. It follows that $\sum_{i=1}^n L_i\cdot g^{B,\beta'}\circ[Q_0:Q_1:Q_2]=0$, where $g^i_{B,\beta'}$ denotes the $i\textsuperscript{th}$ coordinate of the embedding $g_{B,\beta'}$, which corresponds to the basis element $\beta'_i$. In particular, the conic $Q$ is contained in the curve $Y:=\sum_{i=1}^n L_i\beta_i'$, which has degree $n$ and contains the locus $B$. If $B\cap Q=\varnothing$, then all points of $B$ are contained in the components of $Y$ away from $Q$, so $B$ is contained in a curve of degree $n-2$. But $\det{\VB}\neq 0$, and hence there is no such curve. We thus have $B\cap Q\neq\varnothing$, as desired.
    \end{proof}

Before giving examples of $B$ and $Q$ for which $\mc{A}(B,Q)=(\det\VB)^{2n}\cdot\mc{R}(B,Q)\neq 0$, we need the following lemma about elementary symmetric polynomials.

\begin{lemma}\label{lem:symmetric}
    Let $e_i(X_1,\ldots,X_n)$ denote the $i\textsuperscript{th}$ elementary symmetric polynomial in $n$ variables. Adopt the convention that $e_0=1$, and $e_i(X_1,\ldots,X_n)=0$ if $i<0$ or $i>n$. Then for any $1\leq j\leq n$, we have
    \[e_{i+1}(X_1,\ldots,X_n)=\sum_{z=0}^{i+1} e_z(X_1,\ldots,X_j)e_{i-z+1}(X_{j+1},\ldots,X_n).\]
\end{lemma}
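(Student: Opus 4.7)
The plan is to prove the identity by comparing coefficients in the multiplicative generating function for elementary symmetric polynomials. Recall the standard identity
\[\prod_{k=1}^{n}(1+tX_k)=\sum_{m\geq 0} e_m(X_1,\ldots,X_n)\,t^m,\]
which follows immediately by expanding the product and collecting terms of each degree in $t$ (and which makes the convention $e_m=0$ for $m>n$ automatic).

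First I would split the product at the index $j$ to obtain
\[\prod_{k=1}^{n}(1+tX_k)=\Bigl(\prod_{k=1}^{j}(1+tX_k)\Bigr)\Bigl(\prod_{k=j+1}^{n}(1+tX_k)\Bigr)=\Bigl(\sum_{z\geq 0} e_z(X_1,\ldots,X_j)t^z\Bigr)\Bigl(\sum_{w\geq 0} e_w(X_{j+1},\ldots,X_n)t^w\Bigr).\]
Multiplying out the two power series on the right and collecting terms of degree $m$ in $t$ gives the Cauchy product
\[\sum_{m\geq 0} t^m \sum_{z=0}^{m} e_z(X_1,\ldots,X_j)\, e_{m-z}(X_{j+1},\ldots,X_n).\]

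Finally I would compare the coefficient of $t^{i+1}$ on both sides and rename $m=i+1$ to obtain the desired identity
\[e_{i+1}(X_1,\ldots,X_n)=\sum_{z=0}^{i+1} e_z(X_1,\ldots,X_j)\, e_{i+1-z}(X_{j+1},\ldots,X_n).\]
The convention that $e_z=0$ when $z$ exceeds the number of variables (here $j$ or $n-j$) automatically trims the sum to its nontrivial range, so no edge-case verification is needed. There is no real obstacle here: this is a one-line generating-function argument, and the only thing to be careful about is that the boundary terms $e_0=1$ on each factor correctly produce the $z=0$ and $z=i+1$ contributions.
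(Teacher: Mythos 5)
Your proof is correct, but it takes a genuinely different route from the paper. The paper argues combinatorially: it first observes that each monomial of $e_{i+1}(X_1,\ldots,X_n)$ can occur in at most one summand $E_z:=e_z(X_1,\ldots,X_j)e_{i-z+1}(X_{j+1},\ldots,X_n)$ (since the exponent pattern on $X_1,\ldots,X_j$ determines $z$), and then reduces the identity to a count of monomials on both sides, namely the Vandermonde binomial identity $\binom{n}{i+1}=\sum_{z=0}^{i+1}\binom{j}{z}\binom{n-j}{i-z+1}$, which it in turn proves by comparing coefficients in $(x+1)^n=(x+1)^j(x+1)^{n-j}$. Your generating-function argument with $\prod_{k=1}^n(1+tX_k)$ accomplishes the monomial-level identity and the counting step in one stroke: the Cauchy product automatically pairs each monomial with the unique $z$ it belongs to, so you never need to separately verify distinctness of the summands or invoke the binomial identity. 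What the paper's version buys is an explicit, elementary bookkeeping of which monomials land where (and it isolates the purely numerical content as a displayed binomial identity); what yours buys is brevity and the observation that the paper's binomial step is really the specialization $X_1=\cdots=X_n=x$ of the same factorization. Both are complete proofs, and your handling of the boundary conventions ($e_0=1$, $e_m=0$ for $m$ exceeding the number of variables) is correct and automatic in the generating-function framework.
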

\begin{proof}
    Let
    \[E_z:=e_z(X_1,\ldots,X_j)e_{i-z+1}(X_{j+1},\ldots,X_n).\]
    Each of the summands $E_z$ consists of a sum of distinct monomials of degree $i+1$, each with coefficient 1. Moreover, given a monomial summand $M:=X_1^{c_1}\cdots X_n^{c_n}$ of $e_{i+1}(X_1,\ldots,X_n)$ (so $c_\ell\in\{0,1\}$ and $\sum_{\ell=1}^n c_\ell=i+1$), there is at most one $z$ such that $M$ is a summand of $E_z$. Indeed, any monomial summand $X_1^{d_1}\cdots X_n^{d_n}$ of $E_z$ must satisfy $\sum_{\ell=1}^j d_\ell=z$. 
    
    It therefore suffices to show that the numbers of monomial summands in $\sum_{z=0}^{i+1} E_z$ and $e_{i+1}(X_1,\ldots,X_n)$ are equal. The latter number is $\binom{n}{i+1}$, essentially by definition of the elementary symmetric polynomials. Similarly, the number of summands in $e_z(X_1,\ldots,X_j)$ and $e_{i-z+1}(X_{j+1},\ldots,X_n)$ are given by $\binom{j}{z}$ and $\binom{n-j}{i-z+1}$, respectively. Since the indeterminates for these two elementary symmetric polynomials are disjoint, the product $E_z$ consists of $\binom{j}{z}\binom{n-j}{i-z+1}$ distinct monomials. Summing up to count the distinct monomials of $\sum_{z=0}^{i+1} E_z$, we find that it suffices to prove
    \[\binom{n}{i+1}=\sum_{z=0}^{i+1}\binom{j}{z}\binom{n-j}{i-z+1}.\]
    This follows from the binomial theorem by comparing the coefficient of $x^{i+1}$ on both sides of the equation
    \[(x+1)^n=(x+1)^j(x+1)^{n-j}.\]
    Equivalently (but perhaps more illustratively), repeated application of the standard identity $\binom{n-m}{i-z}+\binom{n-m}{i-z+1}=\binom{n-m+1}{i-z+1}$ implies that the following rows all have the same sum:
    \[\begin{tikzcd}[sep=tiny]
        &&&& \binom{n}{i+1} &&&&\\
        &&& \binom{n-1}{i+1} &+& \binom{n-1}{i} &&&\\
        && \binom{n-2}{i+1} &+& 2\binom{n-2}{i} &+& \binom{n-2}{i-1} &&\\
        &\binom{n-3}{i+1} &+& 3\binom{n-3}{i} &+& 3\binom{n-3}{i-1} &+& \binom{n-3}{i-2}&\\
        \binom{n-4}{i+1} &+& 4\binom{n-4}{i} &+& 6\binom{n-4}{i-1} &+& 4\binom{n-4}{i-2} &+& \binom{n-4}{i-3}.\qedhere
    \end{tikzcd}\]
\end{proof}

\begin{prop}
\label{prop: agree at somethint nonzero}
    Let $a_1,\dots,a_n\in\overline{k(\ell)}$ be distinct elements. Define 
    \begin{equation}\label{eq: choice B}
        B:=\left\{[1:a_i:a_j]\in \mathbb P^2\ \middle |\ 1\leq i<j\leq n\right\},
    \end{equation}
    and $Q_1=Q_2=u^2$ and $Q_0=v^2$. For these choices of $B$ and $Q$, we have $\mc{A}(B,Q) = \det(\VB)^{2n}\cdot\mc{R}(B,Q)\neq 0$.
\end{prop}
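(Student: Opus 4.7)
The plan is to verify both the nonvanishing and the equality by direct computation tailored to the explicit $B$ and $Q$. For nonvanishing, I reduce via Proposition~\ref{prop: same zero locus} to checking that $B\cap Q=\varnothing$ and $\det\VB\neq 0$. The first is immediate, since $Q$ parameterizes the line $\{X=Y\}\subset\mb{P}^2$ while every point of $B$ has the form $[1:a_i:a_j]$ with $a_i\neq a_j$. For the second, I argue inductively: if a polynomial $f(x,y)$ of total degree at most $n-2$ vanishes on $B$, then $f(a_1,y)$ is a polynomial in $y$ of degree at most $n-2$ with the $n-1$ distinct roots $a_2,\dots,a_n$, forcing $f(a_1,y)\equiv 0$ and hence $(x-a_1)\mid f$. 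Iterating strips off factors $(x-a_1),\dots,(x-a_{n-1})$, after which the remaining cofactor has negative degree and must vanish.

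For the equality, the resultant identity $\Res(u^2-a_iv^2,\,u^2-a_jv^2)=(a_i-a_j)^2$ gives
\[\mc{R}(B,Q)=\prod_{1\le i<j\le n}(a_i-a_j)^2.\]
The heart of the computation is to produce the basis $\{\beta_i'\}$ explicitly. Writing $M:=\det\VB$, I claim $\beta_i'=M\tilde\beta_i$ where
\[\tilde\beta_i(x,y):=\prod_{j=1}^{n-i}(x-a_j)\cdot\prod_{k=n-i+2}^{n}(y-a_k).\]
The top-degree part of $\tilde\beta_i$ is exactly $x^{n-i}y^{i-1}$, matching the $i$-th column of the block $M\cdot\mathtt{I}_n$ in $\KB$; and $\tilde\beta_i$ vanishes on $B$ because, for any $(a_j,a_k)\in B$ with $j<k$, either $j\le n-i$ (killed by the first product) or $k\ge n-i+2$ (killed by the second), since $j\ge n-i+1$ together with $k\le n-i+1$ would contradict $j<k$. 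Uniqueness of $\beta_i'$ (guaranteed by $\det\VB\ne 0$) then forces $\beta_i'=M\tilde\beta_i$.

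Substituting $[Q_0:Q_1:Q_2]=[v^2:u^2:u^2]$ into the homogenization of $\tilde\beta_i$ yields
\[P_i'(u,v)=M\prod_{j\ne n-i+1}(u^2-a_jv^2),\]
which involves only even powers of $u$ and $v$. Hence $p_{j,i}'=0$ for all odd $j$, so $A_{P_1',\dots,P_n'}$ has a checkerboard sparsity: reordering rows by parity and columns by parity makes it block-diagonal with two identical $n\times n$ blocks, and the row and column permutations each contribute sign $(-1)^{\binom{n}{2}}$ and hence cancel. Each block coincides (up to a harmless row reversal whose sign is lost on squaring) with the coefficient matrix $D$ of the polynomials $\Pi_i(w):=\beta_i'(w,w)=M\cdot p(w)/(w-a_{n-i+1})$, where $p(w):=\prod_{j=1}^n(w-a_j)$.

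Up to column reversal, the $\Pi_i/M$ are the polynomials $\ell_\ell(w):=\prod_{j\ne\ell}(w-a_j)$ for $\ell=1,\dots,n$, which span $k[w]_{\le n-1}$. Evaluating them at $a_m$ gives a diagonal matrix with determinant $\prod_\ell\prod_{j\ne\ell}(a_\ell-a_j)=(-1)^{\binom{n}{2}}\prod_{i<j}(a_i-a_j)^2$; dividing by the classical Vandermonde determinant yields the coefficient determinant $\pm\prod_{i<j}(a_i-a_j)$. Squaring erases all signs:
\[\mc{A}(B,Q)=\det(D)^2=M^{2n}\prod_{i<j}(a_i-a_j)^2=(\det\VB)^{2n}\cdot\mc{R}(B,Q),\]
which is nonzero since the $a_i$ are distinct. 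The main obstacle is guessing the explicit form of $\tilde\beta_i$; once it is in hand, the rest is a routine Lagrange/Vandermonde determinant computation with a sign audit that works out cleanly because every auxiliary sign appears in pairs or under a square.
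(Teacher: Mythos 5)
Your proof is correct. It follows the same overall skeleton as ours --- compute $\mc{R}(B,Q)=\prod_{i<j}(a_i-a_j)^2$ directly, show $\det\VB\neq 0$ by stripping off linear factors, determine the coefficient matrix of the $P_i'$, and evaluate its determinant --- but the execution of the crucial third step is genuinely different and noticeably cleaner. We build the basis $\beta'$ by solving the interpolation systems $\VB f_j=w_j$ row by row with Vieta's formulas, which produces the coefficients of Equation~\ref{eq:r_st} and then requires the symmetric-function identity of Lemma~\ref{lem:symmetric} to reassemble them into $e_{i+1}(a_1,\ldots,\hat{a}_{n-i+1},\ldots,a_n)$. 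You instead write down the interpolating polynomials in closed form, $\tilde\beta_i=\prod_{j\le n-i}(x-a_j)\cdot\prod_{k\ge n-i+2}(y-a_k)$, verify that they vanish on $B$ and have leading part $x^{n-i}y^{i-1}$, and conclude by uniqueness; the precise statement you are using is that when $\det\VB\neq 0$ the degree-$(n-1)$ part determines an element of $\ker\LB$ (if that part vanishes, the element is $z$ times a degree-$(n-2)$ form through $B$, hence zero), and it would be worth spelling this out. This bypasses Lemma~\ref{lem:symmetric} entirely and makes the substitution $[Q_0:Q_1:Q_2]=[v^2:u^2:u^2]$ transparent, giving $P_i'=\det\VB\cdot\prod_{j\neq n-i+1}(u^2-a_jv^2)$ immediately. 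Your closing determinant evaluation via Lagrange interpolation and the Vandermonde determinant likewise replaces our induction on $n$, and every sign you discard is indeed killed by the final squaring (your block-diagonal reduction is our observation that the matrix is $N\otimes\mathtt{I}_2$). In short: same strategy, but your explicit product formula for the basis is a real simplification of the hardest computation.
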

\begin{proof}
    We will prove this in four steps.
   \begin{enumerate}[Step 1:]
   \item $\mc{R}(B,Q)=\prod_{i<j}(a_i-a_j)^2$. We prove this step directly. For each point $[1:a_i:a_j]\in B$, we have $Q_1-a_iQ_0=u^2-a_iv^2$ and $Q_2-a_jQ_0=u^2-a_jv^2$. Thus
   \begin{align*}
       \mc{R}(B,Q)&=\prod_{1\leq i<j\leq n}\Res(u^2-a_iv^2,u^2-a_jv^2)\\
       &=\prod_{1\leq i<j\leq n}(a_i-a_j)^2.
   \end{align*}
   \item $\det{\VB}\neq 0$. We prove this step by contradiction. If $\det{\VB}=0$, then there exists a non-zero polynomial $f(x,y)$ of degree $n-2$ such that $f(a_i,a_j)=0$ for all $1\leq i<j\leq n$. If we fix $x=a_1$, we get a polynomial $f(a_1,y)$ of degree at most $n-2$ with $n-1$ distinct roots ($y=a_2,\ldots,a_n$). It follows that $f(a_1,y)=0$, so $x-a_1$ is a factor of $f(x,y)$. Let $f_1(x,y)=\frac{f(x,y)}{x-a_1}$. Then $f_1(a_1,y)$ is a polynomial of degree at most $n-3$ with $n-2$ distinct roots, and hence $x-a_2$ is a factor of $f_1(x,y)$. Repeating this process, we find that $x-a_i$ is a factor of $f(x,y)$ for all $i$, and hence $f(x,y)=0$ (as the $\deg(f)<n-1$). This contradicts the fact that $f$ was a non-zero polynomial, so we find that $\det{\VB}\neq 0$.
   \item With our particular choice of $[Q_0:Q_1:Q_2]$ and $B$, we have 
   \[A_{P_1',\ldots,P_n'}=\det\VB\cdot\begin{pmatrix}
       1 & 0 & \cdots & 1 & 0\\
       0 & 1 & \cdots & 0 & 1\\
       p_{2n-4,1} & 0 & \cdots & p_{2n-4,n} & 0\\
       0 & p_{2n-4,1} & \cdots & 0 & p_{2n-4,n}\\
       \vdots & \vdots & \ddots & \vdots & \vdots\\
       p_{0,1} & 0 & \cdots & p_{0,n} & 0\\
       0 & p_{0,1} & \cdots & 0 & p_{0,n}
   \end{pmatrix},\]
   where
   \begin{align*}
       p_{j,i}=\begin{cases}(-1)^{\ell-1} e_\ell(a_1,\ldots,\hat{a}_{n-i+1},\ldots,a_n) & j=2n-2-2\ell,\\
       0 & j\text{ odd}\end{cases}
   \end{align*}
   and $e_\ell(X_1,\ldots,X_{n-1})$ is the $\ell\textsuperscript{th}$ elementary symmetric polynomial in $n-1$ variables. (Here, we use the conventions that $e_0=1$ and $\hat{a}_i$ means that $a_i$ is omitted.) 
   
   Proving this formula is the lengthiest of our four steps. To begin, we need to construct our basis $\beta'$ from the columns of
   \[\KB=\begin{pmatrix}-\VB^*\cdot\RB\\ \det\VB\cdot\mathtt{I}_n\end{pmatrix}.\]
   We can then compute $(p_{j,i})=\mathtt{Q}\cdot\KB$, where $\mathtt{Q}:k[X,Y,Z]_{(n-1)}\to k[u,v]_{(2n-2)}$ is the linear transformation given by
   \[\mathtt{Q}(X^iY^jZ^{n-1-i-j})=u^{2(i+j)}v^{2(n-1-i-j)}.\]
   Since $\det\VB\neq 0$, we have $\VB^*=\det\VB\cdot\VB^{-1}$, and hence
   \[\KB=\det\VB\cdot\begin{pmatrix}-\VB^{-1}\cdot\RB\\ \mathtt{I}_n\end{pmatrix}.\]
   We may therefore compute the columns of $\KB$ by computing the columns of
   \[\KB':=\begin{pmatrix}-\VB^{-1}\cdot\RB\\ \mathtt{I}_n\end{pmatrix}.\]
   From now on, we interpret the columns of $\KB'$ as polynomials in $x,y$ (rather than homogeneous polynomials in $x,y,z$) by setting $z=1$. As identity matrices are easy enough to understand, we turn our attention to $\VB^{-1}\cdot\RB$. For each column $w_j$ of $\RB$ (see Equation~\ref{equation: matrix RB}), we need to solve the system $\VB\cdot f_j=w_j$ for the column $f_j$. That is, we need to find a degree $n-2$ polynomial $f_j\in k(\ell)[x,y]$ (which we conflate with its column of coefficients) such that $f_j(b_{i,x},b_{i,y})=w_{i,j}$ for all $[1:b_{i,x}:b_{i,y}]\in B$, where $w_{i,j}$ denotes the $i\textsuperscript{th}$ row of $w_j$. It follows that in order to solve $f_j(b_{i,x},b_{i,y})=w_{i,j}$, it suffices to find a polynomial $f_j$ such that
   \[g_j(x,y):=f_j(x,y)-x^{n-j}y^{j-1}\]
   satisfies $g_j(b_{i,x},b_{i,y})=0$ for all $1\leq i\leq\binom{n}{2}$. Let $r^j_{s,t}\in k(\ell)$ be such that
   \[f_j(x,y)=\sum_{s=0}^{n-2}\left(\sum_{t=0}^s r^j_{s,t}y^t\right)x^{n-2-s}.\]
   Let $r^j_s(y)=\sum_{t=0}^s r^j_{s,t}y^t$, so that $f_j(x,y)=\sum_{s=0}^{n-2}r^j_s(y)x^{n-2-s}$.

   We need to compute $r^j_{s,t}$ such that $g_j(b_{i,x},b_{i,y})=0$ for all $i$. We will compute $r^1_{s,t}$ and $r^2_{s,t}$ to illustrate our general approach, after which we will compute $r^j_{s,t}$ for all $j$. Recall that for our choice of $B$, we have
   \begin{align*}
       (b_{1,x},b_{1,y})&=(a_1,a_2),\\
       (b_{n-1,x},b_{n-1,y})&=(a_1,a_n),\\
       (b_{n,x},b_{n,y})&=(a_2,a_3),\\
       (b_{2n-3,x},b_{2n-3,y})&=(a_2,a_n),\\
       &\ \ \vdots\\
       (b_{\binom{n}{2},x},b_{\binom{n}{2},y})&=(a_{n-1},a_n).
   \end{align*}

   For $j=1$, we have $g_1(x,y)=-x^{n-1}+f_1(x,y)$. If $g_1(b_{i,x},b_{i,y})=0$ for all $i$, then $g_1(x,a_n)$ is a degree $n-1$ polynomial in $x$ with roots $a_1,\ldots,a_{n-1}$. By Vieta's formulas, it follows that $r^1_{0,0}=a_1+\ldots+a_{n-1}$. Note that for any fixed $y_0$, the roots of $g_1(x,y_0)$ must also sum to $a_1+\ldots+a_{n-1}$. Thus the roots of $g_1(x,a_{n-1})$ are given by $a_1,\ldots,a_{n-2}$ (by our definition of $B$) and also $a_{n-1}$ (since the roots must sum to $a_1+\ldots+a_{n-1}$). Vieta's formulas now give us
   \begin{align*}
   -\sum_{1\leq i<j\leq n-1}a_ia_j&=r^1_{1,0}+r^1_{1,1}a_n\\
   &=r^1_{1,0}+r^1_{1,1}a_{n-1},
   \end{align*}
   so $r^1_{1,1}=0$ and $r^1_{1,0}=-\sum_{i<j}a_ia_j$. Continuing this process for $y=a_{n-2},a_{n-3},\ldots$, we conclude that
   \[r^1_s(y)=(-1)^s e_{s+1}(a_1,\ldots,a_{n-1}),\]
   where $e_{s+1}$ denotes the $(s+1)\textsuperscript{st}$ elementary symmetric polynomial in $n-1$ variables.

   For $j=2$, we have $g_2(x,y)=(r^2_{0,0}-y)x^{n-2}+\sum_{s=1}^{n-2}r^2_s(y)x^{n-2-s}$. As with $j=1$, we will consider $g_2(x,a_n)$. We then have $n-1$ roots ($a_1,\ldots,a_{n-1}$) of this degree $n-2$ polynomial, so $g_2(x,a_n)$ must be identically zero. It follows that $r^2_s(a_n)=0$ for all $s$, so each of these polynomials is divisible by $y-a_n$. Let $\bar{r}^2_s(y)=\frac{r^2_s(y)}{y-a_n}$. Factoring, we have
   \[g_2(x,y)=(y-a_n)\left(-x^{n-2}+\sum_{s=1}^{n-2}\bar{r}^2_s(y)x^{n-2-s}\right).\]
   Now since $a_i\neq a_n$ for $i<n$, the assumption that $g_2(a_i,a_n)=0$ implies that the polynomial
   \[-x^{n-2}+\sum_{s=1}^{n-2}\bar{r}^2_s(a_{n-1})x^{n-2-s}\]
   has roots $a_1,\ldots,a_{n-2}$. Repeating our arguments from the $j=1$ case, we find that $\bar{r}^2_s(y)=(-1)^s e_s(a_1,\ldots,a_{n-2})$, where we now have the $s\textsuperscript{th}$ elementary symmetric polynomial in $n-2$ variables (rather than the $(s+1)\textsuperscript{st}$ elementary symmetric polynomial in $n-1$ variables). In particular, we have $r^2_{s}=(-1)^s(y-a_n)e_s(a_1,\ldots,a_{n-2})$ and hence
   \begin{align*}
       r^2_{s,t}=\begin{cases}
           (-1)^{s+1}a_{n}e_s(a_1,\ldots,a_{n-2}), & t=0,\\
           (-1)^{s}e_s(a_1,\ldots,a_{n-2}) & t=1,\\
           0 & t\geq 2.
       \end{cases}
   \end{align*}

   For $j>2$, we follow the same steps as in the $j=2$ case. We find that $a_1,\ldots,a_{n-1}$ are roots of $g_j(x,a_n)$, so $g_j(x,a_n)=0$. Thus $r^j_{0,0}=0$, so $g_j(x,a_n)$ has degree $n-3$ in $x$. This implies that $g_j(x,a_{n-1})=0$, which means that the linear polynomial $r_1^j(y)$ has distinct roots $a_{n}$ and $a_{n-1}$ and must therefore be identically zero, and likewise for the polynomials $r^j_s(y)$ with $s<j-2$ upon repeating this process. We conclude that
   \[g_j(x,y)=\prod_{i=0}^{j-2}(y-a_{n-i})\cdot\left(-x^{n-j}+\sum_{s=j-1}^{n-2}\bar{r}^j_s(y)x^{n-2-s}\right),\]
   where
   \begin{equation}\label{eq:rbar}
   \bar{r}^j_s(y)\cdot\prod_{i=0}^{j-2}(y-a_{n-i})=r^j_s(y).
   \end{equation}
   Since $g_j(x,a_{n-j+1})=0$ and $\prod_{i=0}^{j-2}(a_{n-j+1}-a_{n-i})\neq 0$, it follows that the degree $n-j$ polynomial
   \[-x^{n-j}+\sum_{s=j-1}^{n-2}\bar{r}^j_s(a_{n-j+1})x^{n-2-s}\]
   has roots $a_1,\ldots,a_{n-j}$. As in the $j=2$ case, we find that
   \[\bar{r}^j_s(y)=(-1)^s e_{s-j+2}(a_1,\ldots,a_{n-j}).\]
   It now follows from Equation~\ref{eq:rbar} and Vieta's formulas that
   \begin{equation}\label{eq:r_st}
    r^j_{s,t}=(-1)^{s+t+1}e_{s-j+2}(a_1,\ldots,a_{n-j})e_{j-1-t}(a_{n-j+2},\ldots,a_n),
   \end{equation}
   where we use the convention that $e_0=1$ and $e_m=0$ for $m<0$.

   We have solved for the columns of $\VB^{-1}\cdot\RB$. When considering the columns of $\KB'$, we must multiply the columns of $\VB^{-1}\cdot\RB$ by $-1$. After composing with $[Q_0:Q_1:Q_2]$ (i.e.~multiplying by $\mathtt{Q}$), the monomials corresponding to the coefficients of the lower block $\mathtt{I}_n$ are all $u^{2n-2}$ for our choice of conic. It follows that
   \[\mathtt{Q}\cdot\KB'=\begin{pmatrix}
       1 & 1 & \cdots & 1\\
       0 & 0 & \cdots & 0\\
       -\sum_{s=0}^{n-2}r^1_{s,s} & -\sum_{s=0}^{n-2}r^2_{s,s} & \cdots & -\sum_{s=0}^{n-2}r^n_{s,s}\\
       0 & 0 & \cdots & 0\\
       -\sum_{s=0}^{n-3}r^1_{s+1,s} & -\sum_{s=0}^{n-3}r^2_{s+1,s} & \cdots & -\sum_{s=0}^{n-3}r^n_{s+1,s}\\
       \vdots & \vdots & \ddots & \vdots\\
       0 & 0 & \cdots & 0\\
       -(r^1_{n-3,0}+r^1_{n-2,1}) & -(r^2_{n-3,0}+r^2_{n-2,1}) & \cdots & -(r^n_{n-3,0}+r^n_{n-2,1})\\
       0 & 0 & \cdots & 0\\
       -r^1_{n-2,0} & -r^2_{n-2,0} & \cdots & -r^n_{n-2,0}
   \end{pmatrix}.\]
   
   To complete Step 3, it remains to show that 
   \[-\sum_{s=0}^{n-2-i}r^j_{s+i,s}=(-1)^ie_{i+1}(a_1,\ldots,\hat{a}_j,\ldots,a_n).\]
   By Equation~\ref{eq:r_st}, we have
   \[r^j_{s+i,s}=(-1)^{i+1}e_{s+i-j+2}(a_1,\ldots,a_{n-j})e_{j-1-s}(a_{n-j+2},\ldots,a_n).\]
   Note that $r^j_{s+i,s}=0$ when $s<j-i-2$ or $s>j-1$, so
   \[-\sum_{s=0}^{n-2-i}r^j_{s+i,s}=-\sum_{s=j-i-2}^{j-1}r^j_{s+i,s}.\]
   We will re-index by setting $z=s-(j-i-2)$, so that
   \begin{align*}
       -\sum_{s=j-i-2}^{j-1}r^j_{s+i,s}&=-\sum_{z=0}^{i+1}r^j_{z+j-2,z+j-i-2}\\
       &=(-1)^{i+2}\sum_{z=0}^{i+1}e_z(a_1,\ldots,a_{n-j})e_{i-z+1}(a_{n-j+2},\ldots,a_n).
   \end{align*}
   It follows from Lemma~\ref{lem:symmetric} that
   \[\sum_{z=0}^{i+1}e_z(a_1,\ldots,a_{n-j})e_{i-z+1}(a_{n-j+2},\ldots,a_n)=e_{i+1}(a_1,\ldots,\hat{a}_{n-j+1},\ldots,a_n),\]
   as required.
   \item $\detABQ=(\det{\VB})^{2n}\cdot\prod_{i<j}(a_i-a_j)^2$. By Step 3, we have
   \begin{align*}
       \mc{A}(B,Q)&=\det{A_{P_1',\ldots,P_n'}}\\
       &=(\det\VB)^{2n}\cdot\det\begin{pmatrix}
       1 & 0 & \cdots & 1 & 0\\
       0 & 1 & \cdots & 0 & 1\\
       p_{2n-4,1} & 0 & \cdots & p_{2n-4,n} & 0\\
       0 & p_{2n-4,1} & \cdots & 0 & p_{2n-4,n}\\
       \vdots & \vdots & \ddots & \vdots & \vdots\\
       p_{0,1} & 0 & \cdots & p_{0,n} & 0\\
       0 & p_{0,1} & \cdots & 0 & p_{0,n}
   \end{pmatrix},
   \end{align*}
   so it suffices to prove that the latter matrix (which we call $\tilde{N}$) has determinant $\prod_{i<j}(a_i-a_j)^2$ . Note that $\tilde{N}=N\otimes\mathtt{I}_2$, where
   \[N:=\begin{pmatrix}
       1 & 1 & \cdots & 1\\
       p_{2n-4,1} & p_{2n-4,2} & \cdots & p_{2n-4,n}\\
       \vdots & \vdots & \ddots & \vdots\\
       p_{0,1} & p_{0,2} & \cdots & p_{0,n}
   \end{pmatrix},\]
   so $\det\tilde{N}=\det{N}^2$. It therefore suffices to prove that
   \[\det{N}=\pm\prod_{1\leq i<j\leq n}(a_i-a_j),\]
   which we will do by induction on $n$. Note that $p_{i,j}$ is a polynomial in $n-1$ variables, even though this dependence on $n$ is not reflected in the notation. To make this dependence more explicit, let $e_\ell(\bm{a}_{i,n})=e_\ell(a_1,\ldots,\hat{a}_i,\ldots,a_n)$ and
   \[N_n=\begin{pmatrix}
       1 & 1 & \cdots & 1\\
       e_1(\bm{a}_{n,n}) & e_1(\bm{a}_{n-1,n}) & \cdots & e_1(\bm{a}_{1,n})\\
       \vdots & \vdots & \ddots & \vdots\\
       (-1)^{n-2}e_{n-1}(\bm{a}_{n,n}) & (-1)^{n-2}e_{n-1}(\bm{a}_{n-1,n}) & \cdots & (-1)^{n-2}e_{n-1}(\bm{a}_{1,n})
   \end{pmatrix}.\]
   If $n=2$, then we have
   \[N_2=\begin{pmatrix}1 & 1\\ a_2 & a_1\end{pmatrix},\]
   and thus $\det{N_2}=a_1-a_2$, as desired. Next, assume that 
   \[\det{N_n}=\pm\prod_{1\leq i<j\leq n}(a_i-a_j)^2.\]
   In order to compute $\det{N_{n+1}}$, subtract the first column of $N_{n+1}$ from the last $n$ columns of $N_{n+1}$. Since
   \begin{align*}
       e_\ell(\bm{a}_{i,n+1})-e_\ell(\bm{a}_{n+1,n+1})&=(a_{n+1}-a_i)e_{\ell-1}(\bm{a}_{i,n}),
   \end{align*}
   it follows that $\det{N_{n+1}}$ is equal to
   \[\det\begin{psmallmatrix}
           1 & 0 & \cdots & 0\\
           e_1(\bm{a}_{n+1,n+1}) & (a_{n+1}-a_n)e_0(\bm{a}_{n,n}) & \cdots & (a_{n+1}-a_1)e_0(\bm{a}_{1,n})\\
           \vdots & \vdots & \ddots & \vdots\\
           (-1)^{n-1}e_n(\bm{a}_{n+1,n+1}) & (-1)^{n-1}(a_{n+1}-a_n)e_{n-1}(\bm{a}_{n,n}) & \cdots & (-1)^{n-1}(a_{n+1}-a_1)e_{n-1}(\bm{a}_{1,n})
    \end{psmallmatrix}.\]
    After factoring $a_{n+1}-a_i$ out of the $(n+2-i)\textsuperscript{th}$ column for $i=1,\ldots,n$ and a $-1$ out of last $n-1$ rows, the resulting matrix has the form
    \[\begin{pNiceArray}{c|ccc}
    1 & 0 & \cdots & 0 \\
    \hline
    * & \Block{3-3}<\large>{N_n} && \\
    \vdots &&&\\
    * &&&
    \end{pNiceArray}.\]
    It follows that
    \begin{align*}
    \det{N_{n+1}}&=(-1)^{n-1}\prod_{i=1}^n(a_{n+1}-a_i)\cdot\det{N_n}\\
    &=-\prod_{i=1}^n(a_i-a_{n+1})\cdot\det{N_n}.
   \end{align*}
   We thus have
   \[\det{N_{n+1}}=\mp\prod_{1\leq i<j\leq n+1}(a_i-a_j),\]
   which completes the proof by induction.\qedhere
   \end{enumerate}
   \end{proof}

Our final goal is to show that $\mc{A}(B,Q)$ and $(\det\VB)^{2n}\cdot\mc{R}(B,Q)$ differ by a constant, which we will then show to be one. Before proving this, we need the following lemmas.

\begin{lemma} \label{lem: resultant irred}
    As a polynomial on $\A^{2\binom n2}\times \A^9$ (in the coordinates of $B$ and coefficients of $Q$), the polynomials
    \[\Res(Q_1-b_xQ_0,Q_2-b_yQ_0)\]
    are irreducible over $\overline{k(\ell)}$. Here, $(b_x,b_y)$ represents one of the $(b_{i,x},b_{i,y})$.
\end{lemma}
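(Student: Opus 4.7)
My plan is to show that $R := \Res(Q_1 - b_x Q_0, Q_2 - b_y Q_0)$ is irreducible as a polynomial in the 11 variables consisting of $b_x, b_y$ together with the 9 coefficients of $Q_0, Q_1, Q_2$. Writing $Q_i = \alpha_{i,2} u^2 + \alpha_{i,1} uv + \alpha_{i,0} v^2$, I will work in $\overline{k(\ell)}[\alpha_{i,j}, b_x, b_y]$. Since $R$ does not involve the coordinates of any other point of $B$, irreducibility in this subring implies irreducibility in the full polynomial ring on $\mb{A}^{2\binom{n}{2}} \times \mb{A}^9$, because adjoining dummy variables preserves irreducibility. The strategy is the standard two-step: show the vanishing locus $V(R) \subset \mb{A}^{11}$ is irreducible, then rule out proper powers.

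For the first step I set up the incidence variety
\[Z := \{(Q_0, Q_1, Q_2, b_x, b_y, [u:v]) \in \mb{A}^{11}\times\mb{P}^1 : Q_1(u,v) = b_x Q_0(u,v)\ \text{and}\ Q_2(u,v) = b_y Q_0(u,v)\}\]
and consider the projection $\pi : Z \to \mb{P}^1$. The fiber over $[u_0 : v_0]$ equals the Zariski closure in $\mb{A}^{11}$ of the graph of the rational map
\[\mb{A}^9 \dashrightarrow \mb{A}^2, \qquad (Q_0, Q_1, Q_2) \mapsto \left(\frac{Q_1(u_0, v_0)}{Q_0(u_0, v_0)},\ \frac{Q_2(u_0, v_0)}{Q_0(u_0, v_0)}\right).\]
This graph is isomorphic to the open subset $\{Q_0(u_0, v_0) \neq 0\}$ of $\mb{A}^9$, so every fiber of $\pi$ is irreducible of dimension $9$. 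Since $\mb{P}^1$ is irreducible and the fibers are equidimensional, $Z$ itself is irreducible of dimension $10$. The image of the other projection $Z \to \mb{A}^{11}$ is exactly $V(R)$, because the Sylvester resultant of two binary quadratic forms vanishes precisely when they share a common zero in $\mb{P}^1$. So $V(R)$ is irreducible, and consequently $R = c \cdot f^m$ for some $c \in \overline{k(\ell)}^\times$, irreducible $f$, and integer $m \geq 1$.

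To conclude $m = 1$, I would specialize $Q_0 = u^2$, $Q_1 = v^2$, $Q_2 = uv$ and compute the Sylvester determinant directly, obtaining
\[\Res_{u,v}(v^2 - b_x u^2,\ uv - b_y u^2) = b_y^2 - b_x,\]
which has degree $1$ in $b_x$ and is therefore irreducible in $\overline{k(\ell)}[b_x, b_y]$. If $m \geq 2$, then this specialization would have the form $c \cdot \bar{f}^m$ for some nonconstant $\bar f \in \overline{k(\ell)}[b_x, b_y]$, and thus would be reducible --- contradicting the irreducibility of $b_y^2 - b_x$. Therefore $m = 1$ and $R$ is irreducible. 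The most delicate point is identifying the fibers of $\pi$ with closures of graphs of rational maps in order to conclude both irreducibility and the correct dimension; everything else follows from a direct determinant computation and the standard characterization of the Sylvester resultant.
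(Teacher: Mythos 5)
Your argument is correct, but it takes a genuinely different route from the paper. The paper's proof is a direct computation: it writes out the resultant explicitly (via Sage), regards it as a quadratic polynomial in $b_y$ over $T=\overline{k(\ell)}[q_{00},\ldots,q_{22},b_x]$, checks that it is primitive and that it has no root over the fraction field of $T$ (via the quadratic formula), and concludes by Gauß's lemma. You instead argue geometrically: the incidence correspondence $Z\subset\mb{A}^{11}\times\mb{P}^1$ fibers over $\mb{P}^1$ with irreducible $9$-dimensional fibers, so $Z$ is irreducible; its image in $\mb{A}^{11}$ is closed (properness of $\mb{P}^1$) and equals $V(R)$, so $R=c\cdot f^m$; and the specialization $Q_0=u^2$, $Q_1=v^2$, $Q_2=uv$, which yields $b_y^2-b_x$, forces $m=1$. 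Your approach avoids the computer-algebra step entirely and explains \emph{why} the resultant is irreducible; the paper's approach is shorter and requires no foundational lemmas.

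Two steps in your write-up deserve more care, though neither is fatal. First, the fiber of $\pi$ over $[u_0:v_0]$ is the full zero locus of the two equations, which set-theoretically is the graph you describe together with the $8$-dimensional piece $\{Q_0(u_0,v_0)=Q_1(u_0,v_0)=Q_2(u_0,v_0)=0\}\times\mb{A}^2_{b_x,b_y}$; you must check that this piece lies in the closure of the graph (it does: perturb $(Q_0,Q_1,Q_2)$ to $(Q_0+tu^2,\,Q_1+tb_xu^2,\,Q_2+tb_yu^2)$ when $u_0\neq 0$ and let $t\to 0$), since otherwise the fiber would be reducible and the fibration argument would collapse. Second, the principle ``irreducible base plus irreducible equidimensional fibers implies irreducible total space'' is usually stated for closed (e.g.\ proper) morphisms, and $\pi:Z\to\mb{P}^1$ is not proper; it does go through here because the hypotheses force every irreducible component of $Z$ to dominate the curve $\mb{P}^1$ and to have full-dimensional general fiber, whence any two components agree over a dense open set and hence coincide. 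With these two points filled in, the proof is complete.
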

\begin{proof}
    We can directly compute these resultants using the following Sage code.
    \begin{lstlisting}
R.<b_x,b_y,q_00,q_01,q_02,\\
   q_10,q_11,q_12,q_20,q_21,q_22> = QQ[];
S.<u> = R[];

Q_0 = q_00 + q_01*u + q_02*u^2;
Q_1 = q_10 + q_11*u + q_12*u^2;
Q_2 = q_20 + q_21*u + q_22*u^2;

f = Q_1 - b_x*Q_0;
g = Q_2 - b_y*Q_0;

print(f.resultant(g))
\end{lstlisting}
    Let $T=\overline{k(\ell)}[q_{00},\ldots,q_{22},b_x]$ with fraction field $K$. It is then straightforward to check that $h(b_y):=\Res(Q_1-b_xQ_0,Q_2-b_yQ_0)\in T[b_y]$ is a quadratic polynomial in $b_y$. Moreover, if we write $h(b_y)=h_2b_y^2+h_1b_y+h_0$, we can compute that $\gcd(h_0,h_1,h_2)=1$ in the ring $T$. This can be done by noting that the gcd of the individual terms of $h_2$ is 1, and that no terms of $h_2$ are divisible by $b_x$, but some terms of $h_0$ and $h_1$ are divisible by $b_x$. Alternatively, one can compute this gcd with Sage by adding the following to the code given above:
\begin{lstlisting}
res = f.resultant(g)

h_2 = res.coefficient(b_y^2)
h_1 = res.coefficient(b_y)
h_0 = res - h_2*b_y^2 - h_1*b_y

print(gcd(h_0,h_1))
\end{lstlisting}    
    Using the quadratic formula (and assuming $\op{char}{k}\neq 2$), it is straightforward to check that $h(b_y)$ is irreducible over $K[b_y]$. It now follows from Gauß's lemma that $h(b_y)$ is irreducible over $T[b_y]$, as desired.
\end{proof}

\begin{lemma}\label{lem:det irred}
    The polynomial $\det\VB$ in the coordinates $b_{i,x}$ and $b_{i,y}$ is irreducible over $\overline{k(\ell)}$. 
\end{lemma}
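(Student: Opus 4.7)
My plan is to prove irreducibility of $\det\VB$ in two stages: first as a polynomial in the row-$m$ variables with coefficients in the fraction field of the other rows, then globally. Concretely, let me set $m := \binom{n}{2}$, $X := b_{m,x}$, $Y := b_{m,y}$, and $R := \overline{k(\ell)}[b_{1,x}, b_{1,y}, \ldots, b_{m-1,x}, b_{m-1,y}]$ with fraction field $F$. I would first prove $\det\VB$ is irreducible in $F[X,Y]$, then lift to irreducibility in $S := \overline{k(\ell)}[b_{1,x}, \ldots, b_{m,y}]$.

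For irreducibility over $F$, I would observe that substituting $b_m = b_i$ for any $i < m$ makes two rows of $\VB$ coincide, so $\det\VB$ --- a polynomial of total degree $n-2$ in $F[X,Y]$ --- vanishes at each of the $\binom{n}{2} - 1$ points $b_1, \ldots, b_{m-1}$. These points generically impose independent conditions on the $\binom{n}{2}$-dimensional space of polynomials of degree $\leq n-2$, so the zero locus $V(\det\VB) \subset \mathbb{A}^2_{\overline{F}}$ is exactly the unique such curve through the $b_i$. Next I would invoke that the rational map $(\mathbb{A}^2)^{m-1} \dashrightarrow \mathbb{P}^{m-1}$ sending $(b_1, \ldots, b_{m-1})$ to this curve is dominant onto the projective space of degree-$(n-2)$ plane curves, so the generic-fiber curve is a generic plane curve of degree $n-2$, which is smooth (hence reduced and irreducible) for $n \geq 3$. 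This yields irreducibility of $\det\VB$ in $F[X,Y]$.

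To lift to $S$, suppose a nontrivial factorization $\det\VB = P \cdot Q$ exists in $S$. Viewed in $F[X,Y]$, one factor --- say $P$ --- must be a unit, hence lies in $F^*$; combined with $P \in S$, this forces $P \in R$, so $P$ is independent of $b_m$. Running the same argument with row $m$ replaced by any row $i$, I would conclude that for each $i$, either $P$ or $Q$ is independent of $b_i$. Writing $I_P$ and $I_Q$ for the sets of rows actually appearing in $P$ and $Q$, this gives $I_P \cap I_Q = \emptyset$ and $I_P \cup I_Q = \{1, \ldots, m\}$.

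To derive a contradiction, both $P$ and $Q$ being nonconstant forces $I_P, I_Q$ nonempty; I would pick $i \in I_P$, $j \in I_Q$ and substitute $b_i = b_j$ in $\det\VB = PQ$. The left-hand side vanishes (two rows of $\VB$ now coincide), while $Q$ is unchanged on the right-hand side (since $i \notin I_Q$), so $P|_{b_i = b_j} \cdot Q = 0$ in the integral domain $\overline{k(\ell)}[b_\ell : \ell \neq i]$. As $Q \neq 0$, this forces $P|_{b_i = b_j} = 0$. But since $j \notin I_P$, the substitution $b_i \mapsto b_j$ acts on $P$ as a mere renaming of variables --- an isomorphism of polynomial rings --- forcing $P = 0$ and contradicting nonconstancy. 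Hence $\det\VB$ is irreducible. The main obstacle I anticipate is the geometric input in the first step: justifying that the generic degree-$(n-2)$ curve through $\binom{n}{2} - 1$ generic points is smooth and irreducible, which reduces to dominance of the map from point configurations to the space of curves, together with the standard fact that the generic plane curve of degree $n-2$ is smooth.
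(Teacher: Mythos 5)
Your proof is correct, but it takes a genuinely different route from the paper. The paper disposes of this lemma in a few lines by invoking a black-box irreducibility criterion for interpolation determinants \cite[Theorem 1.5]{DT09} and verifying its combinatorial hypotheses on the exponent set $\{(i,j): i+j\le n-2\}$. You instead give a self-contained argument in two stages: (1) over the function field $F$ of the first $m-1$ points, the cofactor expansion exhibits $\det\VB$ as the (unique up to scalar) degree-$(n-2)$ polynomial through those points, and the incidence-correspondence dimension count shows the map from configurations of $\binom{n}{2}-1$ points to the $\mathbb{P}^{\binom{n}{2}-1}$ of degree-$(n-2)$ curves is dominant, so the generic such curve is smooth and irreducible, giving irreducibility in $F[X,Y]$; (2) a separation-of-variables argument: any factorization $\det\VB=PQ$ forces the index sets $I_P,I_Q$ of rows appearing in each factor to partition $\{1,\dots,m\}$, and substituting $b_i=b_j$ with $i\in I_P$, $j\in I_Q$ kills the determinant without killing either factor, a contradiction. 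Both stages check out (for $n\ge 3$ the degree $n-2\ge 1$ guarantees every row's variables genuinely appear, which your argument quietly needs, and the dominance claim you flag is indeed the standard generically-finite-of-degree-one incidence argument). What the paper's proof buys is brevity at the cost of an external citation whose hypotheses are opaque; what yours buys is a transparent, geometric proof in the same spirit as the rest of Section 3 (curves through point configurations), at the cost of about a page. Either is acceptable; if you use yours, spell out the dominance argument rather than leaving it as an anticipated obstacle.
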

\begin{proof}
    Recall that we have assumed $n\geq 3$ (Remark~\ref{rem:n geq 3}), so $\binom{n}{2}\geq 3$. In the notation of \cite[Theorem 1.5]{DT09}, we have $N=\binom{n}{2}$,
    \begin{align*}
        (\gamma_{1_1},\gamma_{2_1},\ldots,\gamma_{N_1})&=(0,1,0,2,1,0,\ldots,n-2,n-3,\ldots,0),\\
        (\gamma_{1_2},\gamma_{2_2},\ldots,\gamma_{N_2})&=(0,0,1,0,1,2,\ldots,0,1,\ldots,n-2),
    \end{align*}
    and $\overline{\gamma}=(0,0)$. Since $\gamma_2=(1,0)$, the largest natural number $d_\Gamma$ such that $\frac{1}{d_\Gamma}(\gamma_2-\overline{\gamma})=(\frac{1}{d_\Gamma},0)$ is an element of $\mb{N}^2$ is 1. Finally, since $\gamma_2=(1,0)$ and $\gamma_3=(0,1)$, it follows that $\dim\mc{L}_\Gamma=2$. Therefore the assumptions of \cite[Theorem 1.5]{DT09} hold, which implies that the interpolation determinant $\det\VB$ is irreducible over any algebraically closed field.
\end{proof}

We can now show that $\mc{A}(B,Q)$ and $\mc{R}(B,Q)$ differ by $c\cdot(\det\VB)^d$ for some constant $c$ and some integer $d$.

\begin{prop}\label{prop:equal up to scalar}
    As polynomials on $\A^{2\binom n2}\times \A^9$, we have $\mc{A}(B,Q) = c\cdot\det(\VB)^d\cdot\mc{R}(B,Q)$ for some $c\in\overline{k(\ell)}$ and $d\in\mb{Z}$.
\end{prop}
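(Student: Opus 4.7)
I would work in the UFD $R := \overline{k(\ell)}[\{b_{i,x}, b_{i,y}\}_{i}, \{q_{rs}\}_{r,s}]$ and combine Hilbert's Nullstellensatz with a total-degree count in the $q$-variables. Abbreviate $\Res_i := \Res(Q_1 - b_{i,x} Q_0, Q_2 - b_{i,y} Q_0)$, so that $\mc{R}(B,Q) = \prod_i \Res_i$. By Lemmas~\ref{lem:det irred} and~\ref{lem: resultant irred}, the elements $\det \VB, \Res_1, \ldots, \Res_{\binom{n}{2}}$ are pairwise distinct irreducibles of $R$: $\det \VB$ involves none of the $q$-variables while each $\Res_j$ does, and distinct $\Res_i$ involve disjoint subsets of the $b$-variables.

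First I would derive a preliminary factorization. Proposition~\ref{prop: same zero locus} gives $V(\mc{A}(B,Q)) = V(\det \VB) \cup \bigcup_i V(\Res_i)$. The containment $V(\Res_i) \subseteq V(\mc{A}(B,Q))$ combined with the Nullstellensatz and irreducibility of $\Res_i$ yields $\Res_i \mid \mc{A}(B,Q)$ for each $i$. Conversely, any irreducible factor $h$ of $\mc{A}(B,Q)$ satisfies $V(h) \subseteq V(\mc{A}(B,Q))$, so by irreducibility of $V(h)$ it lies in $V(\det \VB)$ or in some $V(\Res_i)$; the Nullstellensatz then forces $h$ to be a unit multiple of $\det \VB$ or of some $\Res_i$. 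Unique factorization in $R$ therefore gives
\[
\mc{A}(B,Q) = c \cdot (\det \VB)^a \cdot \prod_{i} \Res_i^{m_i}
\]
for some $c \in \overline{k(\ell)}^\times$, $a \geq 0$, and $m_i \geq 1$.

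The heart of the argument is to show $m_i = 1$ for every $i$, which I would do by a total-degree count in the $q$-variables. Inspecting Equation~\ref{equation: matrix KB}, $\KB$ depends only on the coordinates of $B$ and is independent of $q$, so all the $q$-dependence of the coordinate polynomial $P_i' = \mathtt{Q} \cdot (\text{$i$-th column of } \KB)$ flows through $\mathtt{Q}$. Each entry of $\mathtt{Q}$ is a coefficient in $u,v$ of a monomial $Q_0^{n-1-r-s}Q_1^{r}Q_2^{s}$ and is therefore homogeneous of total $q$-degree $n-1$. Consequently every coefficient $p_{j,i}$ of $P_i'$, and so every entry of the $2n \times 2n$ matrix $A_{P_1', \ldots, P_n'}$, has $q$-degree at most $n-1$; this yields
\[
\deg_{q} \mc{A}(B,Q) \leq 2n(n-1) = 4\binom{n}{2}.
\]
On the other hand, $\Res_i$ is a $4\times 4$ Sylvester determinant whose entries are linear in $q$, so $\deg_q \Res_i = 4$. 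Comparing $q$-degrees in the factorization above gives
\[
4 \sum_{i} m_i = \deg_q\Big(\prod_i \Res_i^{m_i}\Big) \leq \deg_q \mc{A}(B,Q) \leq 4\binom{n}{2},
\]
hence $\sum_i m_i \leq \binom{n}{2}$. Since there are exactly $\binom{n}{2}$ summands, each at least $1$, this forces $m_i = 1$ for every $i$, and we conclude $\mc{A}(B,Q) = c \cdot (\det \VB)^a \cdot \mc{R}(B,Q)$, as required.

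The only delicate step is the degree bound in the third paragraph; it is an elementary computation, but it hinges on the observation that $\KB$ is purely a function of the $b$-coordinates, with the $q$-dependence confined to $\mathtt{Q}$. The remaining ingredients (Nullstellensatz, UFD, irreducibility of $\det \VB$ and the $\Res_i$) are already in place from the preceding lemmas.
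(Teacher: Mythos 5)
Your proposal is correct and follows essentially the same route as the paper: Nullstellensatz together with the irreducibility lemmas yields the factorization $\mc{A}(B,Q)=c\cdot(\det\VB)^a\cdot\prod_i\Res_i^{m_i}$, and a degree count in the $q$-variables forces each $m_i=1$. The only cosmetic difference is that the paper obtains the equality $4\sum_i N_i=2n(n-1)$ by comparing the scaling of $\mc{A}$ and of each resultant under $Q\mapsto\lambda Q$ (Propositions~\ref{prop:independence for t}(iii) and~\ref{prop:independence for a}(iii)), whereas you bound $\deg_q\mc{A}$ directly from the structure of $\mathtt{Q}\cdot\KB$; both amount to the same homogeneity observation.
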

\begin{proof}
Since $\mc{A}$ and $\det(\VB)^{2n}\cdot\mc{R}$ have the same zero locus (Proposition~\ref{prop: same zero locus}), Hilbert's Nullstellensatz implies that radical ideals generated by the two polynomials are the same. Therefore
\begin{equation}\label{eq: Nullstellensatz A and R}
\sqrt{(\mc{A})} =\sqrt{\left(\det\VB\right)}\cap \sqrt{(\mc{R})}.
\end{equation}

Since $\mc{R}$ is the product of distinct irreducible polynomials (over $\overline{k(\ell)}$) and $\det\VB$ is irreducible (over $\overline{k(\ell)}$) by Lemmas \ref{lem: resultant irred} and \ref{lem:det irred}, we conclude that the ideals $(\mc{R})$ and $(\det\VB)$ are radical. This implies that
\begin{equation}\label{eq: prime dec of A}
    \mc{A} = c\cdot\det(\VB)^d\cdot\prod_{b\in B(\overline{k(\ell)})} \Res(Q_1 - b_x Q_0, Q_2 - b_y Q_0)^{N_i}
\end{equation}
for some integers $d>0$ and $N_i>0$ and some $c\in\overline{k(\ell)}$. 

We finish by showing that $N_i = 1$ for $i=1,\ldots, \binom{n}{2}$. Consider the value of the polynomial $\mc{A}$ for $Q$ and $\lambda Q$ for some $\lambda\in\overline{k(\ell)}$. By Proposition~\ref{prop:independence for t} (iii), we have  $\Res(\lambda Q_1 - \lambda b_x Q_0, \lambda Q_2 - \lambda b_y Q_0) = \lambda^4  \Res(Q_1 - b_x Q_0, Q_2 - b_y Q_0)$. Similarly, by Proposition~\ref{prop:independence for a} (iii), we have $\mc{A}(\lambda Q,B) = \lambda^{2n(n-1)}\mc{A}(Q,B)$. Comparing the multiplicity of $\lambda$ in Equation~\ref{eq: prime dec of A}, we get
\[4\sum_{i=1}^{\binom n2}N_i = 2n(n-1),\]
which implies that $\sum_i N_i=\binom{n}{2}$. Since $N_i>0$ for each $i$, we have $N_i=1$ for all $i$.
\end{proof}

Finally, we conclude by using Proposition~\ref{prop: agree at somethint nonzero} to show that $c=1$ and $d=2n$ in Equation~\ref{eq: prime dec of A}.

\begin{cor}\label{cor:equal}
    We have that $c=1$ and $d=2n$ in Equation \ref{eq: prime dec of A}. Therefore, $\mathcal A(B,Q) =(\det\VB)^{2n}\cdot \mathcal R(B,Q)$, which implies that the conic index is equal to the local index, as desired.
\end{cor}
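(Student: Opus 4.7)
The plan is to combine Proposition~\ref{prop: agree at somethint nonzero} with Proposition~\ref{prop:equal up to scalar} and then use the non-constancy of $\det\VB$ (Lemma~\ref{lem:det irred}) to pin down the two unknowns. By Proposition~\ref{prop:equal up to scalar}, we have a polynomial identity
\[\mc{A}(B,Q) = c\cdot(\det\VB)^d\cdot \mc{R}(B,Q)\]
on $\A^{2\binom{n}{2}}\times\A^9$. Restricting both sides to the one-parameter family indexed by distinct $a_1,\dots,a_n\in\overline{k(\ell)}$ constructed in Proposition~\ref{prop: agree at somethint nonzero} (where $Q_0=v^2$, $Q_1=Q_2=u^2$ and the $b$'s are $[1:a_i:a_j]$) yields a polynomial identity in the $a_i$'s, inside $\overline{k(\ell)}[a_1,\dots,a_n]$.

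Next, on that same family, Step~1 and Step~4 of Proposition~\ref{prop: agree at somethint nonzero} give the explicit evaluations $\mc{R}(B,Q)=\prod_{i<j}(a_i-a_j)^2$ and $\mc{A}(B,Q)=(\det\VB)^{2n}\cdot\prod_{i<j}(a_i-a_j)^2$. Substituting these into the identity above and cancelling the factor $\prod_{i<j}(a_i-a_j)^2$, which is a nonzero element of $\overline{k(\ell)}[a_1,\dots,a_n]$, leaves
\[(\det\VB)^{2n} = c\cdot(\det\VB)^d\]
as polynomials in the $a_i$'s. By Lemma~\ref{lem:det irred}, $\det\VB$ is irreducible (hence non-constant) in the coordinates of $B$, so its specialization to the subfamily of Proposition~\ref{prop: agree at somethint nonzero} is a non-constant polynomial in the $a_i$'s as well (one can verify this by, e.g., noting that $\det\VB$ has positive degree in $a_1$). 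Unique factorization in $\overline{k(\ell)}[a_1,\dots,a_n]$ therefore forces $d=2n$ and $c=1$.

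With $d=2n$ in hand, the identity $\mc{A}(B,Q)=(\det\VB)^{2n}\cdot\mc{R}(B,Q)$ expresses $\mc{A}(B,Q)$ as a product of $\mc{R}(B,Q)$ with a square, so $\mc{A}(B,Q)$ and $\mc{R}(B,Q)$ represent the same class in $k(\ell)^\times/(k(\ell)^\times)^2$, which is what the proof of Theorem~\ref{thm:local index = conic index} reduces to. Tracing this back through the setup of that proof: we have $\mc{A}(B,Q)=\det A_{P_1,\ldots,P_n}$ up to squares (by Lemma~\ref{lem:determinants up to squares}) and $\mc{R}(B,Q)=\prod_{b\in B}\N_{k(b)/k(\ell)}\alpha_b$, so applying $\langle\,\cdot\,\rangle$ in $\GW(k(\ell))$ and then $\Tr_{k(\ell)/k}$ gives $\ind_\ell\sigma_F=\con(X,\ell)$, completing the theorem.

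The main obstacle in this plan is the separation of the two factors $(\det\VB)^d$ and $\mc{R}(B,Q)$, which already was handled in Proposition~\ref{prop:equal up to scalar} via the Nullstellensatz and irreducibility (Lemmas~\ref{lem: resultant irred} and~\ref{lem:det irred}); once that is available, the only remaining subtlety is to justify the cancellation step above, namely that the specialization of $\det\VB$ to the Proposition~\ref{prop: agree at somethint nonzero} family remains non-constant in the $a_i$'s. This is immediate from inspection of the $\VB$ matrix, since any monomial in $\det\VB$ involving $b_{1,x}$ specializes to a monomial involving $a_1$ (or similarly for other variables), so no non-trivial power of $\det\VB$ can equal a scalar.
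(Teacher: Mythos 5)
Your proposal is correct and follows essentially the same route as the paper: specialize the identity of Proposition~\ref{prop:equal up to scalar} to the family of Proposition~\ref{prop: agree at somethint nonzero}, cancel the nonzero factor $\prod_{i<j}(a_i-a_j)^2$, and use that $\det\VB$ is non-constant on that family to force $d=2n$ and $c=1$. Your extra remark justifying why the specialization of $\det\VB$ remains non-constant is a welcome (if minor) elaboration of a step the paper leaves implicit.
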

\begin{proof}
    Proposition \ref{prop: agree at somethint nonzero} and Equation \ref{eq: prime dec of A} imply that 
    \[c\cdot(\det\VB)^d = (\det\VB)^{2n}\]
    for any $B$ of the form given in Equation~\eqref{eq: choice B}. It follows that $(\det\VB)^{2n-d}$ is constant for any such $B$, which is true only if $2n-d=0$. Consequently, $c=1$.
\end{proof}

\bibliography{segre}{}
\bibliographystyle{alpha}
\end{document}